\newcommand*{\inserttitle}{Quantitative selection theorems}
    \tikzset{vertex/.style={draw, shape=circle, inner sep=1.5pt, minimum size=4pt}}
    \tikzset{<->/.tip={Latex}}
    \tikzset{shorten > = 2pt, shorten <=2pt}
    \tikzset{smallnode/.style={every node/.style={draw, fill=black, shape=circle, inner sep=0pt, minimum size=4pt}, scale=0.7}}
    \tikzset{drawnode/.style={fill,shape=circle,inner sep=0pt, minimum size=3pt}}
    \newlength{\circlabelwidth}
    \setlist{nosep}
    \setlist[enumerate]{label=\textup{\arabic*.}}
    \newlist{subprob}{enumerate}{2}
        \setlist[subprob,1]{label={(\roman*)}}
        \setlist[subprob,2]{label={(\arabic*)}}
    \setlist[itemize]{labelindent=10pt,labelwidth=\circlabelwidth,leftmargin=!,label=$\circ$}
    \newlist{problems}{enumerate}{3}
        \setlist[problems,1]{before=\setupstar,label=\textup{\arabic*.}, itemsep=2pt, topsep=8pt,ref=\textup{\arabic*}}
        \setlist[problems,2]{before=\setupstar,label=(\alph*),parsep=0pt}
        \setlist[problems,3]{before=\setupstar,label=(\roman*),parsep=0pt}
    \renewcommand\@makefntext[1]{\leftskip=0em\hskip-0em\@makefnmark\,#1}
    \parbox{\textwidth}{
        \textbf{\textsc{Method}}
        \begin{mdframed}[innerleftmargin=4pt,innerrightmargin=4pt,skipabove=3pt,skipbelow=0pt]
            \BODY
        \end{mdframed}
    }
\theoremstyle{itcaps}
\newtheorem{theorem}{Theorem}
\newtheorem*{theorem*}{Theorem}
\newtheorem{corollary}[theorem]{Corollary}
\newtheorem{conjecture}{Conjecture}
    \crefname{conjecture}{conjecture}{conjectures}
\newtheorem{lemma}[theorem]{Lemma}
\newtheorem{proposition}[theorem]{Proposition}
\theoremstyle{solved}
\theoremstyle{caps}
\newtheorem{definition}[theorem]{Definition}
\newtheorem{question}[conjecture]{Question}
\newtheorem{exampleprimitive}[theorem]{Example}
    \crefname{exercise}{exercise}{exercises}
\newcounter{problemno}
    \crefname{problemno}{problem}{problems}
    \Crefname{problemno}{Problem}{Problems}
\theoremstyle{remark}
\newtheorem*{remark-primal}{Remark}
\newenvironment{remark}{%
    \begin{remark-primal}
}{%
    \hfill$\lozenge$
    \end{remark-primal}
}
\numberwithin{equation}{section}
\newcommand*{\newword}[2][]{\emph{#2}\index{%
    \ifx&#1&%
       #2%
    \else%
       #1%
    \fi}%
} 
\newcommand*{\oldword}[2][]{#2\index{%
    \ifx&#1&%
       #2%
    \else%
       #1%
    \fi}%
} 
\newcommand*{\E}{\mathbb{E}}
\DeclareMathOperator{\conv}{conv}
\let\phi\varphi
\let\epsilon\varepsilon
\let\oldchi\chi
\renewcommand{\chi}{\raisebox{1pt}{$\oldchi$}}
\newcommand{\defeq}{\vcentcolon=}
\title{\inserttitle}
\date{}
\newcommand{\vol}{\operatorname{vol}}
\newcommand{\set}[1]{\underline{\mathcal{#1}}} 
\begin{document}

\thispagestyle{plain}


\customtitle{\inserttitle}
{\centering
    {\large\scshape Travis Dillon\par}
    {\small Massachusetts Institute of Technology}\par
}

\begin{abstract}\setstretch{1.0}
    The point selection theorem says that the convex hull of any finite point set contains a point that lies in a positive proportion of the simplices determined by that set. This paper proves several new volumetric versions of this theorem which replace the points by sets of large volume, including the first volumetric selection theorem for $(d+1)$-tuples. As consequences, we significantly decrease the upper bound for the number of sets necessary in a volumetric weak $\epsilon$-net, from $O_d(\epsilon^{-d^2(d+3)^2/4})$ to $O_d(\epsilon^{-(d+1)})$, and substantially reduce the the piercing number for volumetric $(p,q)$-theorems.

    We also prove a volumetric version of the homogeneous point selection theorem. To do so, we introduce a volumetric same-type lemma and a new volumetric colorful Tverberg theorem. 
    
    We prove all of our results for diameter as well as volume.
\end{abstract}

\section{Overview}

Any point in the convex hull of a point set is contained in a simplex determined by that set; that is the statement of Carath\'eodory's theorem from 1907 \cite{caratheodory}. However, according to the \emph{point selection theorem}, some points in the convex hull are contained in many more simplices:

\begin{theorem*}[B\'ar\'any \cite{selection-thm-original}, Theorem 5.1]
    For any finite point set $X \subseteq \R^d$, there is a point $z \in \R^d$ that is contained in at least $c_d \binom{|X|}{d+1}$ simplices whose vertices lie in $X$, where the constant $c_d > 0$ depends only on $d$.
\end{theorem*}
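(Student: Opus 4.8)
The plan is to derive the theorem by combining two classical results: Tverberg's theorem and the fractional Helly theorem. Write $n = |X|$. By a standard limiting argument (perturb $X$ to general position, then recover a convergent witness point and a fixed collection of simplices valid along a subsequence) we may assume $X$ is in general position, so that each of its $N \defeq \binom{n}{d+1}$ subsets of size $d+1$ spans a genuine $d$-simplex. Let $\mathcal F$ be this family of $N$ simplices; it suffices to produce a point of $\R^d$ contained in at least $c_d N$ members of $\mathcal F$.

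The heart of the argument is to show that a positive fraction of the $(d+1)$-element subfamilies of $\mathcal F$ have a common point, after which fractional Helly does the rest. For this I would first establish a \emph{balanced} form of Tverberg's theorem: any $(d+1)^2$ points in $\R^d$ can be split into $d+1$ classes of size exactly $d+1$ whose convex hulls share a common point. This follows by applying the ordinary Tverberg theorem to $(d+1)d+1$ of the points, which yields $d+1$ parts with a common point $q$; one then repeatedly uses Carathéodory's theorem to trim any part of size larger than $d+1$ down to a $(d+1)$-subset that still contains $q$, returning the discarded points to a pool, and finally distributes the pool among the parts of size smaller than $d+1$ — every step keeps $q$ in all the hulls, and the process terminates with all parts of size exactly $d+1$. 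Consequently every $(d+1)^2$-subset $Z \subseteq X$ furnishes at least one intersecting $(d+1)$-subfamily of $\mathcal F$, namely $\{\conv(C_1),\dots,\conv(C_{d+1})\}$ for a balanced Tverberg partition $Z = C_1 \sqcup \dots \sqcup C_{d+1}$; since such a subfamily recovers $Z$ as the union of its vertex sets, distinct subsets give distinct subfamilies, so at least $\binom{n}{(d+1)^2}$ of the $(d+1)$-subfamilies of $\mathcal F$ intersect. A direct comparison of binomial coefficients gives $\binom{n}{(d+1)^2} \ge \alpha_d \binom N{d+1}$ for an explicit constant $\alpha_d = \alpha_d(d) > 0$ (both quantities have order $n^{(d+1)^2}$), valid once $n \ge (d+1)^2$. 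Then the fractional Helly theorem, applied to the $N$ convex sets in $\mathcal F$, produces a point lying in at least $\beta_d N$ of them, where $\beta_d = 1 - (1-\alpha_d)^{1/(d+1)} > 0$. Taking $c_d = \beta_d$ — shrunk if necessary so that $c_d\binom{n}{d+1} \le 1$ in the finitely many cases $n < (d+1)^2$, which are covered by Carathéodory's theorem alone — proves the theorem.

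The step I expect to demand the most care is the first one: pinning down the balanced Tverberg statement and verifying that the trim-and-redistribute procedure really terminates with every class of size $d+1$ while never losing the common point $q$, together with the bookkeeping that upgrades "$\ge \binom{n}{(d+1)^2}$ intersecting subfamilies" into a genuine constant fraction of all $(d+1)$-subfamilies. Everything else — the general-position reduction and the invocation of the standard Tverberg and fractional Helly theorems — is routine but should be spelled out honestly.
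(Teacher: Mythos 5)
Your argument is correct, and since the paper only quotes B\'ar\'any's theorem as background (citing \cite{selection-thm-original}) without reproving it, there is no in-paper proof to match against; what you have written is essentially the standard textbook derivation (Tverberg plus fractional Helly, as in Matou\v{s}ek's \emph{Lectures on Discrete Geometry}, \S 9.1). The balanced-Tverberg step is sound: Tverberg applied to $d(d+1)+1$ of the $(d+1)^2$ points gives $d+1$ parts with a common point $q$, Carath\'eodory trims each part to at most $d+1$ points while keeping $q$, and redistributing the leftover points only enlarges hulls, so the process trivially terminates with $d+1$ vertex-disjoint $(d+1)$-sets covering $Z$; the injectivity $Z \mapsto$ subfamily and the comparison $\binom{n}{(d+1)^2} \ge \alpha_d \binom{N}{d+1}$ (both sides being $\Theta_d(n^{(d+1)^2})$) are fine. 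Two small remarks: the general-position reduction is unnecessary, since nothing in the argument uses nondegeneracy of the simplices and fractional Helly tolerates repeated convex sets; and your route differs from B\'ar\'any's original one, which applies Tverberg once to all of $X$ with $r \approx n/(d+1)$ parts and then uses the colorful Carath\'eodory theorem to exhibit $\binom{r}{d+1}$ distinct rainbow simplices through a single Tverberg point --- that argument avoids fractional Helly entirely and yields a better constant $c_d$, whereas yours trades the colorful Carath\'eodory input for the (also standard) fractional Helly theorem.
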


Are there many ``deep'' points? If ``many'' is measured by volume, then not necessarily: The volume of $\conv(X)$ itself may be arbitrarily small. Even if $\vol\!\big(\!\conv(X)\big) = 1$, it may be that all but $1$ point lie in an arbitrarily small ball, which any deep point must also lie within.

However, these questions have better answers if we replace the points in $X$ by sets with fixed volume:

\begin{theorem*}[Selection theorem with volume; Jung, Nasz\'odi \cite{quant-frac-pq}, Lemma 5.3]
    For any finite family $\mathcal F$ of volume-1 convex sets in $\R^d$, there is a set $E$ with volume at least $d^{-d}$ that is contained in $\conv(\mathcal G)$ for at least $c_d \binom{|\mathcal F|}{d^2(d+3)^2/4}$ choices of $\mathcal G \in \binom{\mathcal F}{d^2(d+3)^2/4}$.\footnote{Each appearance of $c_d$ in a theorem statement is independent and indicates a positive constant depending only on $d$. The constant $c_d$ in this theorem and the $c_d$ in the point selection theorem represent different real numbers.\label{foot:c_d}}
\end{theorem*}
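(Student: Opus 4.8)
The plan is to pass from $\mathcal F$ to a family of ellipsoids, lift those ellipsoids to points in a Euclidean space of exactly the right dimension, apply the point selection theorem there, and extract a single fat witness set from each deep tuple. Write $n = |\mathcal F|$, let $B$ be the Euclidean unit ball, and $\omega_d = \vol(B)$. By John's theorem, every volume-$1$ convex body $K \subseteq \R^d$ contains an ellipsoid $E$ with $\vol(E) \ge d^{-d}$: the maximal-volume inscribed ellipsoid $E$ lies in $K$, and $K$ lies in the $d$-fold dilate of $E$ about its center, so $1 = \vol(K) \le d^d\vol(E)$. Replacing each $K \in \mathcal F$ by such an inscribed ellipsoid only shrinks every $\conv(\mathcal G)$, so it suffices to prove the claim for a family $E_1, \dots, E_n$ of ellipsoids, each of volume at least $d^{-d}$. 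Write $E_i = c_i + S_i B$ with $c_i \in \R^d$ and $S_i$ the unique symmetric positive-definite matrix with $S_iB = E_i - c_i$; then $\det S_i = \vol(E_i)/\omega_d \ge d^{-d}/\omega_d$.

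The heart of the matter is the parametrization: send $E_i$ to $\phi(E_i) := (c_i, S_i)$ in the real vector space $V := \R^d \times \operatorname{Sym}_d$, where $\operatorname{Sym}_d$ is the space of symmetric $d \times d$ matrices, so that $\dim V = d + \binom{d+1}{2} = \tfrac{d(d+3)}{2} =: N$. Apply the point selection theorem in $V \cong \R^N$ to the points $\phi(E_1), \dots, \phi(E_n)$: there is a point $\widehat q \in V$ lying in $\conv\{\phi(E_{i_0}), \dots, \phi(E_{i_N})\}$ for at least $c_d\binom{n}{N+1}$ of the $(N{+}1)$-element index sets $\{i_0, \dots, i_N\}$, where the constant is that of the selection theorem in dimension $N = N(d)$ and so depends only on $d$. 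Fix such an index set and write $\widehat q = \sum_{j=0}^N \lambda_j\,\phi(E_{i_j})$ with $\lambda_j \ge 0$ and $\sum_j \lambda_j = 1$; in coordinates this reads $\widehat q = (q_c, S_Q)$ with $q_c = \sum_j \lambda_j c_{i_j}$ and $S_Q = \sum_j \lambda_j S_{i_j}$, which is symmetric positive-definite as a convex combination of such matrices. Put $E_{\widehat q} := q_c + S_Q B$. Every point of $E_{\widehat q}$ equals, for some $w \in B$,
\[
    q_c + S_Q w \;=\; \sum_j \lambda_j c_{i_j} + \Big(\sum_j \lambda_j S_{i_j}\Big)w \;=\; \sum_j \lambda_j\bigl(c_{i_j} + S_{i_j}w\bigr),
\]
a convex combination of the points $c_{i_j} + S_{i_j}w \in E_{i_j}$, so $E_{\widehat q} \subseteq \conv(E_{i_0} \cup \dots \cup E_{i_N}) \subseteq \conv(\mathcal G)$. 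For its volume, Minkowski's determinant inequality (concavity of $A \mapsto (\det A)^{1/d}$ on positive semidefinite matrices) gives $(\det S_Q)^{1/d} \ge \sum_j \lambda_j(\det S_{i_j})^{1/d} \ge \min_j (\det S_{i_j})^{1/d} \ge d^{-1}\omega_d^{-1/d}$, hence $\vol(E_{\widehat q}) = \omega_d\det S_Q \ge d^{-d}$. Thus $E := E_{\widehat q}$ is a single set of volume at least $d^{-d}$ contained in $\conv(\mathcal G)$ for at least $c_d\binom{n}{N+1}$ sets $\mathcal G \in \binom{\mathcal F}{N+1}$.

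This proves slightly more than is asked, with tuple size $N+1 = \tfrac{d(d+3)}{2} + 1$ in place of $\tfrac{d^2(d+3)^2}{4} = N^2$; since $N \ge 2$ one has $N+1 \le N^2$, and a routine double count (each good $(N{+}1)$-tuple lies in $\binom{n-N-1}{N^2-N-1}$ supersets of size $N^2$, each $N^2$-set having $\binom{N^2}{N+1}$ subsets of size $N+1$) upgrades the conclusion to the stated tuple size with no loss in the constant. Two points need only a word: the $\phi(E_i)$ may fail to be distinct or in general position, which is dispatched by a vanishing perturbation of the centers $c_i$ together with a compactness argument, or by invoking the point selection theorem in its form for multisets; and this causes no change to the constant. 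The one spot where I expect genuine care is the opening line of the second paragraph: one must coordinatize an ellipsoid by its symmetric \emph{square-root} matrix $S_i$ rather than by the shape matrix $S_i^2$, because it is precisely linearity of that chart that drives the displayed identity and hence the containment $E_{\widehat q} \subseteq \conv(E_{i_0} \cup \dots \cup E_{i_N})$, while the Minkowski determinant inequality is what keeps the witness ellipsoid large; with the shape-matrix chart the containment is simply false.
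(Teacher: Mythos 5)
Your proof is correct and is essentially the argument the paper uses: John's theorem plus the linear parametrization of ellipsoids by $(\text{center}, \text{positive-definite square root})$ in $\R^{d(d+3)/2}$, point selection there, and concavity of $\det^{1/d}$ (the paper invokes log-concavity of $\det$) to keep the witness ellipsoid of volume $d^{-d}$ --- this is precisely how the paper proves its stronger \cref{thm:selection-d(d+3)/2} with tuple size $\tfrac12 d(d+3)+1$. Your final double-counting step correctly upgrades that to the $d^2(d+3)^2/4$-tuples of the quoted Jung--Nasz\'odi statement, and your remarks on the square-root chart and on degenerate position are both apt.
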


However, this selection theorem is $\frac{1}{4} d^2(d+3)^2/4$-tuples, not $(d+1)$-tuples. Rolnick and Sober\'on \cite{quant-p-q-theorems} have also proven a version of this theorem, with a greater volume for $E$ but even larger tuples.

The focus of this article is to improve various aspects of the volumetric selection theorem. Our first main result is a selection theorem for $(d+1)$-tuples:

\begin{theorem}\label{thm:selection-d+1}
    For any finite family $\mathcal F$ of volume-1 convex sets in $\R^d$, there is a set with volume at least $4^{-d^2 (1 + o_d(1))}$ that is contained in $\conv(\mathcal G)$ for at least $c_d \binom{|\mathcal F|}{d+1}$ choices of $\mathcal G \in \binom{\mathcal F}{d+1}$.
\end{theorem}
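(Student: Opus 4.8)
The natural plan is to replace each set by a single representative point, invoke Bárány's point selection theorem, and then \emph{fatten} the resulting deep point back into a deep body of positive volume; the real work is controlling the volume lost in the fattening, and that is where the exponent $d^2$ will enter. First, by John's theorem each $K\in\mathcal F$ contains an ellipsoid $\mathcal E_K$ with $\vol(\mathcal E_K)\ge d^{-d}\vol(K)=d^{-d}$, and since $\mathcal E_K\subseteq K$ shrinking every set to its John ellipsoid only shrinks each $\conv(\mathcal G)$; so it suffices to prove the theorem for a family of ellipsoids of volume $\ge d^{-d}$, at the cost of a factor $d^{-d}=4^{-o_d(d^2)}$ in the final bound, which is absorbed by the $o_d(1)$. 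Write $c_K$ for the centre of $\mathcal E_K$ and $F_K:=\mathcal E_K-c_K$, an origin-symmetric ellipsoid of volume $\ge d^{-d}$.

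Next, apply the point selection theorem to the multiset $\{c_K:K\in\mathcal F\}$ (a routine perturbation handles coinciding points) to obtain a point $z$ that lies in the simplex $\conv\{c_K:K\in\mathcal G\}$ for at least $c_d\binom{|\mathcal F|}{d+1}$ tuples $\mathcal G=\{K_0,\dots,K_d\}$. Fix such a tuple and write $z=\sum_j\lambda_j c_{K_j}$ with $\lambda_j\ge0$, $\sum_j\lambda_j=1$. Every point of $z+\sum_j\lambda_j F_{K_j}$ has the form $\sum_j\lambda_j x_j$ with $x_j\in\mathcal E_{K_j}\subseteq K_j$, hence lies in $\conv(\mathcal G)$; and by the Brunn--Minkowski inequality $\vol\big(\sum_j\lambda_j F_{K_j}\big)^{1/d}\ge\sum_j\lambda_j\vol(F_{K_j})^{1/d}\ge\min_j\vol(F_{K_j})^{1/d}$, so this body has volume $\ge d^{-d}$. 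Thus each selected tuple contains in its convex hull a centrally symmetric body $B_{\mathcal G}:=z+\sum_j\lambda_j^{\mathcal G}F_{K_j}$ of volume $\ge d^{-d}$, centred at the common point $z$.

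The crux is now to extract a \emph{single} body $E$ of volume $\ge4^{-d^2(1+o_d(1))}$ contained in a positive fraction of the $B_{\mathcal G}$. The obstruction is that the $B_{\mathcal G}$, though all of volume $\ge d^{-d}$, can be arbitrarily eccentric in arbitrarily varying directions, so one cannot simply intersect them or pass to a subfamily of $\mathcal F$ whose John ellipsoids are comparable -- the family of volume-$1$ convex bodies is not compact modulo translation. The reason to take the Minkowski combination in the previous step is precisely that $\sum_j\lambda_j F_{K_j}$ can be fat even when every individual $F_{K_j}$ is a thin pancake, provided the pancakes point in spread-out directions, and one expects to be able to certify such fatness uniformly on a positive fraction of the deep tuples. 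Concretely, I would argue by a secondary induction on the dimension: on a positive fraction of the selected tuples, identify a direction $u$ in which the $B_{\mathcal G}$ all contain a segment of a fixed length $\ell$; project along $u$ onto $\R^{d-1}$, where the images of the $B_{\mathcal G}$ have $(d-1)$-volume $\ge d^{-d}/(\text{width along }u)$ and the images of the $K_j$ are again convex sets; apply the $(d-1)$-dimensional statement; and lift the resulting $(d-1)$-dimensional body back to a slab of height $\ell$ inside each $B_{\mathcal G}$. Summing over the $d$ levels, each of which should cost a factor of order $4^{-(1+o_d(1))d}$ coming from Rogers--Shephard-type estimates for the projection and the lift, is what one expects to yield the bound $4^{-d^2(1+o_d(1))}$. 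Finally $E=z+(\text{the body produced})$.

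The hard part, and the part I expect to consume most of the proof, is this last step: making precise the sense in which ``for a positive fraction of the deep tuples the Minkowski combinations share a thick common direction'', carrying out the dimension reduction so that both the representative points and the count $c_d\binom{|\mathcal F|}{d+1}$ of good tuples degrade only by a factor depending on $d$ at each level (never by a factor of $|\mathcal F|$), and bounding the cumulative volume loss by $4^{-d^2(1+o_d(1))}$. By comparison, the reduction to ellipsoids and the single application of the point selection theorem are routine.
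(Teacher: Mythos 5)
Your reduction to John ellipsoids, the application of the point selection theorem to the centres, and the Brunn--Minkowski argument showing that $B_{\mathcal G}=z+\sum_j\lambda_j F_{K_j}$ is a body of volume at least $d^{-d}$ inside $\conv(\mathcal G)$ are all correct. But the step you yourself flag as the crux --- extracting a single body $E$ contained in a positive fraction of the $B_{\mathcal G}$ --- is not proved, and it carries essentially the entire content of the theorem. As a statement about an abstract family of volume-$d^{-d}$ bodies centred at a common point $z$ it is \emph{false}: take thin boxes through $z$ rotated through generically spread directions; every positive-density subfamily has common intersection of arbitrarily small volume. So any argument must exploit how the $B_{\mathcal G}$ arise from $\mathcal F$, and the proposed dimension-reduction induction does not do this in a checkable way. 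Concretely: (i) you give no reason why a positive fraction of the $B_{\mathcal G}$ contain a segment of fixed length in a \emph{common} direction $u$ (the thick directions vary over a continuum, and a cap-covering argument in the style of the paper's diameter results needs a lower bound on the measure of good directions for each body, which you do not establish); (ii) the projection bound $\vol_{d-1}\big(\pi_u(B_{\mathcal G})\big)\ge \vol_d(B_{\mathcal G})/w_u(B_{\mathcal G})$ is vacuous without an upper bound on the width $w_u(B_{\mathcal G})$, and volume-1 convex sets have unbounded diameter; (iii) the $(d-1)$-dimensional statement you invoke is not an instance of the theorem being proved (the projected family is not volume-normalized, the count of good tuples must be re-derived after projection, and the lift requires the inductively produced body to lie in the projection of a \emph{common} slab, not merely in the projection of each $B_{\mathcal G}$ separately).

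For comparison, the paper circumvents this obstruction entirely by working in the parametrized space of ellipsoids, a convex subset of $\R^{d(d+3)/2}$, where a genuine volumetric fractional Helly theorem is available (\cref{thm:vol-frac-helly}). It first uses a Tverberg theorem for ellipsoids (\cref{thm:ellipse-tverberg}), a quantitative Steinitz theorem (\cref{thm:quant-steinitz}), and the hyperplane-arrangement reduction of \cref{thm:vol-planes} to show that a positive proportion of $\tfrac{1}{2}d(d+3)$-tuples of the bodies $\conv(\mathcal G)$, $\mathcal G\in\binom{\mathcal F}{d+1}$, contain a common ellipsoid of the stated volume, and then applies fractional Helly to that family. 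That fractional Helly step is exactly the tool that performs the job of your missing step; without it, or a substitute of comparable strength, the proposal does not close.
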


The point of \cref{thm:selection-d+1} is to optimize the subset size: $d+1$ cannot be reduced. As a result, the volume of the intersection is extremely small. As Rolnick and Sober\'on \cite{quant-p-q-theorems} showed, as the subset size is increased, so is the volume. We add two more selection results which also exhibit this behavior. First, with only a few more vertices, the volume of the intersection  can be drastically increased:

\begin{theorem}\label{thm:selection-2d}
    For any finite family $\mathcal F$ of convex sets in $\R^d$, each with volume at least 1, there is a set of volume at least $(5d^3)^{-d}$ that is contained in $\conv(\mathcal G)$ for at least $c_d \binom{|\mathcal F|}{2d}$ choices of $\mathcal G \in \binom{\mathcal F}{2d}$.
\end{theorem}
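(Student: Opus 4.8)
The plan is to pass from the convex sets to simpler bodies, apply B\'ar\'any's point selection theorem to a set of representative points, and then \emph{inflate} the selected point back to a body of positive volume; the delicate point is to make this inflated body the same for a positive proportion of the $2d$-tuples.

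\emph{From sets to ellipsoids, and selection.} By John's theorem every convex body of volume at least $1$ contains an ellipsoid $\mathcal E$ with $\vol(\mathcal E)\ge d^{-d}$; write $c(K)$ for the center of such an inscribed ellipsoid $\mathcal E_K\subseteq K$. This is the only use of the volume hypothesis and accounts for one factor of $d^{-d}$. Applying the point selection theorem to $\{c(K):K\in\mathcal F\}$ (first perturbing into general position) produces a point $z$ lying in $\conv\{c(K):K\in\mathcal I\}$, and generically in its interior, for at least $c_d\binom{|\mathcal F|}{d+1}$ subfamilies $\mathcal I$ of size $d+1$. Enlarging each such $\mathcal I$ to a subfamily of size $2d$ in all $\binom{|\mathcal F|-d-1}{d-1}$ ways and double counting, a positive proportion $c_d\binom{|\mathcal F|}{2d}$ of all $2d$-tuples $\mathcal G$ satisfy $z\in\conv\{c(K):K\in\mathcal G\}$. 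Writing $z=\sum_{K\in\mathcal G}\lambda_K\,c(K)$ with $\lambda_K\ge 0$ and $\sum_K\lambda_K=1$, the pigeonhole principle gives some $K_0=K_0(\mathcal G)\in\mathcal G$ with $\lambda_{K_0}\ge\frac1{2d}$, and then, because $\mathcal E_{K_0}-c(K_0)$ is centrally symmetric,
\[
z+\tfrac1{2d}\bigl(\mathcal E_{K_0}-c(K_0)\bigr)\ \subseteq\ \conv\Bigl(\textstyle\bigcup_{K\in\mathcal G}K\Bigr),
\]
a body of volume at least $(2d)^{-d}\vol(\mathcal E_{K_0})\ge(2d^2)^{-d}$.

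\emph{Making the inflated body uniform.} The body above still depends on $\mathcal G$, through $K_0$ and the shape of $\mathcal E_{K_0}$; choosing $K_0$ by pigeonhole over all of $\mathcal F$ would cost a factor $|\mathcal F|$ and ruin the positive proportion. To remove this dependence I would, \emph{before} the selection step, pass to a large subfamily on which the ellipsoids are geometrically coordinated: bucket the directions of the principal axes of $\mathcal E_K-c(K)$ using a $\delta$-net on $S^{d-1}$ (of size $(C/\delta)^d=e^{O_d(1)}$, an admissible $d$-dependent loss) and, after a common rotation, treat each retained set as containing a fixed reference ellipsoid. Since a volume-$1$ body cannot be too flat --- its longest axis has length $\Omega(d^{-1/2})$ --- and each tuple has $2d$ sets, one can hope to arrange that the convex hull of any tuple from the retained, coloured family contains a fixed body $E$ centered at the fixed point $z$; heuristically, $d$ of the sets supply $d$ roughly-orthogonal long segments through $z$ while the other $d$ anchor the configuration, which is where the subset size $2d$ enters.

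\emph{Bookkeeping and the obstacle.} Tracking the losses --- $d^{-d}$ from John, $(2d)^{-d}$ from the barycentric pigeonhole over the $2d$ coordinates, and a further $(\mathrm{const}\cdot d)^{-d}$ from the net granularity together with the scaling needed to fit a common reference body inside every retained ellipsoid --- and then optimizing how $\delta$ and the remaining parameters are split among these three steps should give a bound of the shape $(5d^3)^{-d}$. I expect the genuine obstacle to be precisely the uniformization step: convex (even ellipsoidal) bodies of a fixed volume form a non-compact family --- they can be arbitrarily thin --- so one cannot simply sort them into $O_d(1)$ congruence classes with a common inscribed body, and the geometry must instead be pushed through the tuples themselves, via a same-type-style argument. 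Making that step quantitatively efficient, so that the total loss is only $(\Theta(d))^{-3d}$ rather than something larger, is the crux, and it is what simultaneously pins down both the value $2d$ and the cubic power of $d$ in the constant.
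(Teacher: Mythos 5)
Your first step is sound as far as it goes: John's theorem, point selection applied to the ellipsoid centers, the double-counting extension from $(d+1)$-tuples to $2d$-tuples, and the barycentric pigeonhole that inflates $z$ to a translate of $\tfrac{1}{2d}(\mathcal E_{K_0}-c(K_0))$ are all correct and give, for a positive proportion of $\mathcal G\in\binom{\mathcal F}{2d}$, \emph{some} body of volume $(2d^2)^{-d}$ inside $\conv(\mathcal G)$. But the theorem requires a \emph{single} set $E$ common to all these hulls, and your uniformization step does not close that gap --- as you yourself observe, volume-$d^{-d}$ ellipsoids have unbounded eccentricity, so no finite $\delta$-net on $S^{d-1}$ (or any $O_d(1)$ bucketing of shapes) can coordinate them, and the heuristic about $d$ long segments through $z$ does not yield a common body of controlled volume. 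So the proposal stalls exactly at its crux and the proof is incomplete.

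The paper resolves the uniformization problem with a different tool: the fractional Helly theorem for ellipsoids (\cref{thm:vol-frac-helly}), applied to the family $\mathcal C=\{\conv(\mathcal G):\mathcal G\in\binom{\mathcal F}{2d}\}$. That theorem is precisely the device that converts ``a positive fraction of $\tfrac12 d(d+3)$-tuples of members of $\mathcal C$ contain a common large ellipsoid'' into ``a single ellipsoid lies in a positive fraction of the members of $\mathcal C$,'' which is the uniform body $E$ you are missing. To supply its hypothesis, the paper does not use point selection at all: it applies a Tverberg-type theorem for ellipsoids (\cref{thm:ellipse-tverberg}) to partition any $f(d)$ of the John ellipsoids into $r=\tfrac12 d(d+3)$ parts whose hulls share an ellipsoid of volume $d^{-d}$, and then uses the quantitative Steinitz theorem (\cref{thm:quant-steinitz}) to prune each part down to exactly $2d$ sets while retaining $\tfrac{1}{5d^2}$ of that ellipsoid --- this is where both the tuple size $2d$ and the constant $(5d^3)^{-d}$ actually come from, rather than from any net-granularity optimization. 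If you want to salvage your approach, the lesson is that the ``same-type-style argument'' you gesture at should be replaced by a fractional Helly step in the parametrized space of ellipsoids.
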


Moreover, the volume guarantee of Jung and Nasz\'odi's result can be exactly matched with tuples of quadratic size:

\begin{theorem}\label{thm:selection-d(d+3)/2}
    For any finite family $\mathcal F$ of convex sets in $\R^d$, each with volume at least 1, there is a set $E$ of volume at least $d^{-d}$ that is contained in $\conv(\mathcal G)$ for at least $c_d \binom{|\mathcal F|}{\frac{1}{2} d(d+3) +1}$ choices of $\mathcal G \in \binom{\mathcal F}{\frac{1}{2} d(d+3) +1}$.
\end{theorem}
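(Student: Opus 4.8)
The plan is to reduce the statement to a selection theorem for ellipsoids via John's theorem, and then to prove that by a Bárány‑type first‑selection argument carried out in the space of ellipsoids, with the Brunn–Minkowski inequality supplying the volume bound.

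\emph{Reduction to ellipsoids.} Replace each $C\in\mathcal F$ by a bounded convex subset of volume exactly $1$, and then by its John ellipsoid $\mathcal E_C\subseteq C$. Since $C$ lies in the $d$‑fold dilate of $\mathcal E_C$ about its centre, $\vol(\mathcal E_C)\ge d^{-d}\vol(C)=d^{-d}$; and since $\conv\big(\bigcup_{C\in\mathcal G}\mathcal E_C\big)\subseteq\conv\big(\bigcup_{C\in\mathcal G}C\big)=\conv(\mathcal G)$, it suffices to prove: for any ellipsoids $\mathcal E_1,\dots,\mathcal E_n$ in $\R^d$, each of volume at least $d^{-d}$, there is a convex body $E$ of volume at least $d^{-d}$ contained in $\conv(\mathcal E_{i_1}\cup\cdots\cup\mathcal E_{i_m})$ for at least $c_d\binom nm$ of the $m$‑element index sets, where $m=\tfrac12 d(d+3)+1=\binom{d+2}{2}$.

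\emph{Two guiding facts.} First, volume comes for free from Minkowski combinations: for any index set $S$ and any nonnegative weights $\lambda_i$ ($i\in S$) summing to $1$, the body $\sum_{i\in S}\lambda_i\mathcal E_i$ is contained in $\conv\big(\bigcup_{i\in S}\mathcal E_i\big)$ — a point $\sum_i\lambda_i x_i$ with $x_i\in\mathcal E_i$ is a convex combination of points of the union — and by Brunn–Minkowski $\vol\big(\sum_{i\in S}\lambda_i\mathcal E_i\big)^{1/d}\ge\sum_{i\in S}\lambda_i\vol(\mathcal E_i)^{1/d}\ge d^{-1}$, so $\vol\big(\sum_{i\in S}\lambda_i\mathcal E_i\big)\ge d^{-d}$. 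Hence any Minkowski convex combination of the $\mathcal E_i$ is an admissible $E$, and it is best to look for $E$ of this form. Second, a dimension count: an ellipsoid in $\R^d$ is specified by $\tfrac12 d(d+1)+d=\tfrac12 d(d+3)$ parameters — equivalently, the quadratic forms on $\R^d$ form a vector space of dimension $\binom{d+2}{2}$ — so a Carathéodory/selection‑type argument in this parameter space, where one needs one more point than the dimension, is what produces the subset size $m=\binom{d+2}{2}$.

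\emph{The selection step, and the main obstacle.} One has to exhibit a \emph{single} body $E$ — which we take to be a Minkowski convex combination $E=\sum_i\lambda_i\mathcal E_i$, so its volume is automatically at least $d^{-d}$ — lying in $\conv\big(\bigcup_{i\in S}\mathcal E_i\big)$ for a positive fraction of the $m$‑subsets $S$. Passing to support functions turns this into finding weights $\lambda$ with $\sum_i\lambda_i h_{\mathcal E_i}\le\max_{i\in S}h_{\mathcal E_i}$ pointwise on the sphere, for $c_d\binom nm$ subsets $S$. This is automatic when the support of $\lambda$ is contained in $S$, but that accounts only for a vanishing fraction of the $S$'s; getting it \emph{robustly} is the whole content, and this is where I expect the difficulty to concentrate. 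A combination with small, well‑spread support is robust but typically has the wrong volume profile, while a combination concentrated on a tight cluster of nearly coincident ellipsoids has the right volume but is not robust. The resolution I would pursue is a preliminary reduction — in the spirit of the same‑type lemma — to a sub‑family of positive density on which the ellipsoids are mutually comparable, followed by a Bárány‑style first‑selection (depth) argument on this sub‑family that produces the weights $\lambda$ while tracking the Minkowski structure; combining the resulting $E$ with the two guiding facts above then finishes the proof.
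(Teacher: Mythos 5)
Your setup is correct and matches the paper's: John's theorem to pass to inscribed ellipsoids, the parameter count $\tfrac12 d(d+3)$ explaining the subset size, and Brunn--Minkowski (equivalently, log-concavity of the determinant on positive-definite matrices) supplying the volume bound. But the proposal stops exactly where the proof has to happen. Your final paragraph does not establish the selection step: it names it ``the main obstacle,'' observes that your formulation only works for a vanishing fraction of the subsets $S$, and then sketches a same-type-lemma reduction and a ``B\'ar\'any-style depth argument tracking the Minkowski structure'' that are never carried out. As written this is a plan with an acknowledged hole, not a proof.

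The obstacle is self-imposed. You fix a single weight vector $\lambda$ once and for all and then need $\sum_i\lambda_i\mathcal E_i\subseteq\conv\big(\bigcup_{i\in S}\mathcal E_i\big)$ for many $S$ that do not contain $\supp(\lambda)$ --- that is indeed hard, and it is not what is required. What must be fixed across the subsets is the \emph{ellipsoid}, not the weights. Write each John ellipsoid as $A_i(B^d)+x_i$ with $A_i$ positive definite and apply the point selection theorem to the points $(A_i,x_i)$ in $\R^{d(d+3)/2}$ (the positive-definite matrices form a convex subset of $\R^{d(d+1)/2}$, so this is a legitimate point configuration). This produces one point $(A^*,x^*)$ lying in $\conv\{(A_i,x_i):i\in S\}$ for at least $c_d\binom{n}{m}$ subsets $S$ of size $m=\tfrac12 d(d+3)+1$. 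For each such $S$ there are weights $\lambda^{(S)}$ supported on $S$ with $A^*=\sum_{i\in S}\lambda^{(S)}_iA_i$ and $x^*=\sum_{i\in S}\lambda^{(S)}_ix_i$; since $A^*u=\sum_{i\in S}\lambda^{(S)}_i(A_iu)$ for every $u\in B^d$, the single ellipsoid $E=A^*(B^d)+x^*$ is contained in the Minkowski combination $\sum_{i\in S}\lambda^{(S)}_i\big(A_i(B^d)+x_i\big)$ and hence in $\conv\big(\bigcup_{i\in S}\mathcal E_i\big)$, for every such $S$. The weights vary with $S$ but $E$ does not, and $\vol(E)=\det(A^*)\vol(B^d)\ge\prod_{i\in S}\det(A_i)^{\lambda^{(S)}_i}\vol(B^d)\ge d^{-d}$ by log-concavity of the determinant. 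This is precisely the paper's proof; no same-type lemma or robustness argument is needed.
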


The second main result in this paper is a volumetric version of the \emph{homogeneous selection theorem}, a remarkable strengthening of the selection lemma.

\begin{theorem*}[Homogeneous selection theorem; Pach \cite{homogeneous-selection}]
    For any sufficiently large point set $X$ in $\R^d$, there are disjoint subsets $Y_1,\dots, Y_{d+1} \subseteq X$ of size $|Y_i| \geq c_d n$ and a point $z \in \R^d$ such that $z \in \conv(y_1,\dots,y_{d+1})$ for every selection $y_i \in Y_i$.
\end{theorem*}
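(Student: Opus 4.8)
My plan follows the ``same‑type lemma $+$ Tverberg‑type core'' strategy. Write $n=|X|$ and assume (after a generic perturbation, which only helps and whose effect is undone because the point I will produce lies in the interiors of all the relevant simplices) that $X$ is in general position. Partition $X$ into $d+1$ disjoint sets $X_1,\dots,X_{d+1}$ of size $\lfloor n/(d+1)\rfloor$; the sets $Y_i$ I output will be subsets of the $X_i$, so they are automatically pairwise disjoint.

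First I would apply the same‑type lemma of B\'ar\'any and Valtr to $X_1,\dots,X_{d+1}$: this produces $Y_i\subseteq X_i$ with $|Y_i|\ge c_d|X_i|\ge c_d'n$ such that $Y_1,\dots,Y_{d+1}$ have same‑type transversals; equivalently, the family is \emph{well separated}, meaning that for every $I\subseteq[d+1]$ the hulls $\conv\!\big(\bigcup_{i\in I}Y_i\big)$ and $\conv\!\big(\bigcup_{i\notin I}Y_i\big)$ are strictly separated by a hyperplane. The theorem then reduces to a purely geometric claim: a well‑separated family of $d+1$ sets in $\R^d$ has a point $z$ lying in every transversal simplex $\conv(y_1,\dots,y_{d+1})$, $y_i\in Y_i$. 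Granting this, $z\in\conv(y_1,\dots,y_{d+1})$ for every selection $y_i\in Y_i$, which is exactly the conclusion.

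To find $z$, the clean half of the argument is that no hyperplane $H$ can meet all $d+1$ of the hulls $\conv(Y_i)$: choosing $q_i\in H\cap\conv(Y_i)$ yields $d+1=(d-1)+2$ points lying in the flat $H\cong\R^{d-1}$, so by Radon's theorem they split into nonempty parts indexed by some $I$ and $[d+1]\setminus I$ with intersecting convex hulls; those hulls lie inside $\conv(\bigcup_{i\in I}Y_i)$ and $\conv(\bigcup_{i\notin I}Y_i)$, contradicting well‑separatedness. Rephrased via one‑dimensional projections, this says that for every direction $u$ the $d+1$ intervals $\{\langle u,y\rangle:y\in\conv(Y_i)\}$ have no common point, i.e.\ $\min_i\max_{y\in Y_i}\langle u,y\rangle\le\max_i\min_{y\in Y_i}\langle u,y\rangle$. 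On the other hand, a point $z$ lies in every transversal simplex precisely when $\langle u,z\rangle\le\max_i\min_{y\in Y_i}\langle u,y\rangle$ for all $u$ (the reverse inequality is the instance at $-u$). So $z$ exists iff this family of halfspaces has nonempty intersection, and one would deduce that from the displayed ``directional consistency'' by Helly's theorem — it suffices that every $d+1$ of the halfspaces meet, which for positively spanning normal directions reduces by Farkas' lemma to a single scalar inequality — combined with the same‑type property, which (since swapping one coordinate of a transversal preserves its orientation) forces each $Y_i$ to lie entirely on the expected side of every facet hyperplane of every transversal simplex. Taking $z$ in the resulting intersection finishes the proof.

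The hard part is exactly this last step: promoting ``no hyperplane meets all $d+1$ hulls'' (a one‑line consequence of Radon) to ``one point meets all transversal simplices.'' This is a Tverberg‑type fact about multicolored simplices, and extracting the common point genuinely uses the same‑type structure rather than just Radon and Helly; a careful write‑up will spend most of its effort here. An alternative route avoids it: apply the colorful Tverberg (or first selection) theorem to $Y_1,\dots,Y_{d+1}$ to obtain a point $z$ in a positive fraction of transversal simplices, and then shrink each $Y_i$ by a constant factor to a set on which $z$ lies in \emph{all} transversal simplices, exploiting that for a well‑separated family the relation ``$z\in\conv(y_1,\dots,y_{d+1})$'' is semialgebraic of bounded complexity. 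Either way, all of the quantitative loss sits in the same‑type lemma, whose constant is of Ramsey type — this is why the $c_d$ in the conclusion is so small.
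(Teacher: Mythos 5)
There is a genuine gap, and it sits exactly where you flagged ``the hard part'': the claim that a well-separated (same-type) family $Y_1,\dots,Y_{d+1}$ automatically admits a point common to \emph{all} transversal simplices is false. Take $d=2$ and place six points on the unit circle at angles $-\epsilon, 60^\circ, 120^\circ, 180^\circ, 240^\circ, 300^\circ$, labelled $a_1,a_2,b_1,b_2,c_1,c_2$, with $Y_1=\{a_1,a_2\}$, $Y_2=\{b_1,b_2\}$, $Y_3=\{c_1,c_2\}$. Each class occupies its own arc, so the family is well separated and all eight transversal triangles have the same orientation. A common point $z=(z_1,z_2)$ would have to lie below the line $a_1b_2$ (that halfplane contains $c_1,c_2$ and hence the triangles $a_1b_2c_k$), in the halfplane $\sqrt3 z_1+z_2\ge 0$ bounded by $b_1c_2$ and containing the $a$'s, and in the halfplane $z_2\ge \sqrt3 z_1$ bounded by $c_1a_2$ and containing the $b$'s. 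The last two force $z_2\ge \sqrt3\,\lvert z_1\rvert\ge 0$, while the line $a_1b_2$ passes through $(-1,0)$ with slope about $-\epsilon/2$, so the first forces $z_2<0$ unless $z_1\le -1$, which is incompatible with $z_2\ge\sqrt3\,\lvert z_1\rvert$. Hence the eight triangles have empty intersection. Your Radon argument does show that no hyperplane meets all $d+1$ hulls, but this does not upgrade to a common point: the admissible $z$ form the intersection of the halfspaces $\langle u,z\rangle\le\max_i\min_{y\in Y_i}\langle u,y\rangle$, and the right-hand side is a maximum of concave functions rather than a support function, so Helly/Farkas cannot close the argument --- as the example confirms.

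The alternative route you mention only in passing is the correct one, and it is essentially Pach's proof (mirrored in this paper's proof of \cref{thm:colorful-hom-vol-sel}): first obtain, via a selection/colorful Tverberg argument together with fractional Helly, a single point $z$ lying in a positive fraction of the transversal simplices; then apply the weak hypergraph regularity lemma (\cref{thm:hypergraph-regularity}) to the $(d+1)$-partite hypergraph of simplices containing $z$ to extract linear-size classes in which every product of $\delta$-fraction subsets still spans at least one simplex containing $z$; and only then apply the same-type lemma --- crucially, to the $d+2$ sets $Y_1',\dots,Y_{d+1}',\{z\}$, with $z$ adjoined as a singleton class. Regularity guarantees that one refined transversal simplex contains $z$, and the same-type property of the \emph{augmented} family (not of $Y_1,\dots,Y_{d+1}$ alone) propagates this to all of them. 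Your main route omits both the regularity step and the inclusion of $z$ in the same-type refinement, and the semialgebraic ``shrinking'' you allude to would need the density theorem for semialgebraic hypergraphs to be made precise; as written, the proof does not go through.
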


In other words, $z$ is not only contained in a constant proportion of the simplices, but all the simplices determined by $d+1$ large subsets! We extend this selection theorem to a volumetric version.

\begin{theorem}[Homogeneous selection with volume]\label{thm:vol-hom-sel}
    For any large enough family $\mathcal F$ of volume-1 convex sets in $\R^d$, there are $d+1$ disjoint subfamilies $\mathcal G_1, \dots, \mathcal G_{d+1}$ of $\mathcal F$, each containing at least $c_d |\mathcal F|$ sets, and a convex set $E \subseteq \R^d$ of volume $v_d > 0$, such that $E \subseteq \conv(G_1,\dots,G_{d+1})$ for every selection $G_i \in \mathcal G_i$. While $v_d$ depends on the dimension $d$, it does not depend on $\mathcal F$.
\end{theorem}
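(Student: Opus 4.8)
The plan is to follow the architecture of Pach's proof of the point-version homogeneous selection theorem, replacing its two key ingredients — a same-type lemma and a colorful Tverberg-type theorem — by volumetric analogues, and to arrange things so that these analogues interlock exactly as they do in the point case. It is convenient to normalize first: for each $F\in\mathcal F$ fix a maximal-volume inscribed simplex $S_F\subseteq F$, whose volume is at least a constant $a_d>0$, and (after an arbitrary small generic perturbation) assume that the vertices of all the $S_F$, taken together, lie in general position. Since $\conv(F_1,\dots,F_{d+1})\supseteq\conv(S_{F_1},\dots,S_{F_{d+1}})$, it suffices to find a fixed positive-volume $E$ inside every $\conv(S_{F_1},\dots,S_{F_{d+1}})$, so every ``volumetric'' hypothesis about $\mathcal F$ becomes a purely combinatorial hypothesis about the finite set of simplex-vertices, at the cost of only $d$-dependent multiplicative constants. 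Finally, partition $\mathcal F$ into $d+1$ color classes $\mathcal C_1,\dots,\mathcal C_{d+1}$ of equal size; the output families $\mathcal G_i$ will be subfamilies of the $\mathcal C_i$.

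The first step is to apply the volumetric same-type lemma to $\mathcal C_1,\dots,\mathcal C_{d+1}$, obtaining subfamilies $\mathcal C_i'\subseteq\mathcal C_i$ with $|\mathcal C_i'|\ge c_d|\mathcal C_i|$ on which every rainbow transversal $(F_1,\dots,F_{d+1})$ realizes one and the same ``type'' — concretely, the order type of the combined vertex set $S_{F_1}\cup\cdots\cup S_{F_{d+1}}$ is the same for all transversals, so the combinatorial structure of the polytope $\conv(S_{F_1},\dots,S_{F_{d+1}})$, and in particular which bounded regions it contains, is fixed across the transversal family. I would deduce this volumetric same-type lemma from the classical one by applying the latter to the $d+1$ point families $\{S_F:F\in\mathcal C_i\}$ (one family of $(d+1)$-point clusters per color); this is legitimate because the number of colors and the cluster sizes are bounded in terms of $d$.

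The second step — the heart of the argument — is to show that the transversal hulls over $\mathcal C_1',\dots,\mathcal C_{d+1}'$ share a common convex set of volume $v_d>0$. This is where the new volumetric colorful Tverberg theorem enters, playing the role that colorful Carath\'eodory (resp. Pach's colored Tverberg lemma) plays in the point case: exploiting the fixed-type structure, a colorful Tverberg partition produces an explicit set $E$ of volume $\ge v_d$ together with a single rainbow transversal whose hull is certified to contain $E$. Because all transversals realize the same type and the type is fine enough to decide containment of $E$ — just as, for points, the order type of $(q_1,\dots,q_{d+1},z)$ decides whether $z$ lies in the simplex $q_1\cdots q_{d+1}$ — the set $E$ then lies in $\conv(F_1,\dots,F_{d+1})$ for \emph{every} transversal of $\mathcal C_1',\dots,\mathcal C_{d+1}'$. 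To make ``the type decides containment of $E$'' literally correct one reruns the same-type lemma on $\mathcal C_1',\dots,\mathcal C_{d+1}',\{E\}$ once $E$ has been pinned down, shrinking the families by one more constant factor; setting $\mathcal G_i$ to be the resulting subfamilies and multiplying the constant losses gives $|\mathcal G_i|\ge c_d|\mathcal F|$, as required.

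The main obstacle is the volumetric colorful Tverberg theorem and its interaction with same type: one needs a Tverberg-type statement for families of fat convex sets that, under the fixed-type hypothesis, certifies a \emph{single positive-volume} set inside a rainbow hull rather than merely a point, and one must verify that this certificate is a genuine consequence of the (suitably enriched) combinatorial type, so that it propagates to all transversals. By comparison the volumetric same-type lemma is fairly routine given the inscribed-simplex reduction, and the remaining work — tracking the dependence of $v_d$ on $d$ through the normalization and the Tverberg step — is purely computational.
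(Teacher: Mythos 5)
Your high-level architecture matches the paper's in outline (volumetric colorful Tverberg $+$ volumetric same-type lemma, with $E$ adjoined as an extra ``color'' so that the type decides containment), but there are two genuine gaps. The first is quantitative and fatal as written: you fix $d+1$ color classes at the outset and invoke a volumetric colorful Tverberg theorem for $d+1$ families. No such theorem is known; the available volumetric colorful Tverberg theorem (\cref{thm:col-tv-vol}) requires $\tfrac{1}{2}d(d+3)$ color classes, and reducing this to $2d$ or $d+1$ is posed in this paper as an open problem. The paper circumvents this by starting with $\tfrac{1}{2}d(d+3)$ disjoint subfamilies and proving a ``reduced'' statement (\cref{thm:col-tv-vol-smaller}): after applying the $\tfrac{1}{2}d(d+3)$-color theorem, one shrinks each transversal to touch only $d+1$ of the families (via a hyperplane-arrangement/Carath\'eodory argument) and pigeonholes to find $r$ transversals supported on the \emph{same} $d+1$ families. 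Which $d+1$ families survive is not under your control, so you cannot pre-partition $\mathcal F$ into $d+1$ classes; for the monochromatic statement this is harmless, but your proof must be restructured around it.

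The second gap is structural: a single Tverberg partition certifies that $E$ lies in only $O_{d,r}(1)$ rainbow hulls, and after you rerun the same-type lemma on $(\mathcal C_1',\dots,\mathcal C_{d+1}',\{E\})$ each family is thinned to a constant fraction, so every one of those finitely many certified transversals may be discarded. Your final step --- ``one transversal of the refined families contains $E$, hence by same-type all do'' --- therefore has no transversal to start from. This is exactly why the paper inserts two more ingredients between the Tverberg step and the same-type step: a counting argument shows that a positive \emph{fraction} of all $(d+1)$-tuples of rainbow hulls intersect with volume $v_d'$, the volumetric fractional Helly theorem (\cref{thm:frac-helly-vol-d+1}) then produces a single set $E$ contained in a positive fraction of all rainbow hulls, and the weak hypergraph regularity lemma (\cref{thm:hypergraph-regularity}) guarantees that \emph{every} sufficiently dense product subfamily --- in particular the output of the same-type refinement --- contains one of these certified transversals. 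Without the fractional Helly and regularity steps the argument does not close. A lesser issue: deducing the volumetric same-type lemma from the classical one applied to ``clusters'' of simplex vertices is not off-the-shelf (the classical lemma selects single points, and constancy of the vertex order type across transversals does not by itself give the strong set-wise order type needed); the paper proves \cref{thm:same-type-volume} directly with ham-sandwich cuts on the measures $\sum_{G}\lambda(\cdot\cap G)$.
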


Actually, we prove a colorful version of the homogeneous selection theorem (\cref{thm:colorful-hom-vol-sel}), from which \cref{thm:vol-hom-sel} follows. The crux of our proof is a new colorful Tverberg theorem for volume (\cref{thm:col-tv-vol-smaller}). In \cref{sec:quant-extensions}, where we extend our results to other quantitative parameters, we prove the first colorful Tverberg theorem for diameter.

\cref{thm:vol-hom-sel} is optimal in that $d+1$ cannot be replaced by any smaller integer.

\subsection*{Applications}

An especially important application of the point selection lemma is the \emph{weak $\epsilon$-net theorem}, which has applications throughout discrete and computational geometry. (See \cite{matousek-eps-net-survey} for a survey.) In short, it says that the set of convex hulls of large subsets of a point set $X$ in $\R^d$ have a transversal whose size is independent of $|X|$. More precisely: 

\begin{theorem*}[Weak $\epsilon$-nets; Alon--B\'ar\'any--F\"uredi--Kleitnman \cite{epsilon-nets-original}]
    For every $\epsilon > 0$, there is a constant $c(\epsilon, d)$ such that for any finite point set $X \subseteq \R^d$, there is a set $P \subseteq \R^d$ of at most $c(d,\epsilon)$ points such that $\conv(Y)$ contains a point of $P$ whenever $Y$ contains at least $\epsilon |X|$ sets in $X$.
\end{theorem*}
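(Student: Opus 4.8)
The plan is to deduce the weak $\epsilon$-net theorem from the point selection theorem by a greedy elimination, reaching $c(d,\epsilon)=O_d(\epsilon^{-(d+1)})$. Write $n=|X|$ and $m=\lceil\epsilon n\rceil$. Since $\conv$ is monotone and every $Y$ with $|Y|\ge\epsilon n$ contains an $m$-element subset, it suffices to find $P$ that pierces $\conv(Y)$ for every $Y\in\binom{X}{m}$; and if $n$ lies below a threshold depending only on $d$ and $\epsilon$ we may simply take $P=X$, so I will assume $n$ is above that threshold --- in particular $m\ge d+1$.

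First I would fix the bookkeeping. Let $\mathcal T=\binom{X}{d+1}$, regarded as the family of ``simplices'' spanned by $X$, and build $P$ one point at a time from $P=\emptyset$; at each stage let $\mathcal T_P\subseteq\mathcal T$ be the subfamily of simplices whose convex hull avoids $P$. The engine of the proof is a dichotomy: either no $m$-subset $Y$ of $X$ has all $\binom{m}{d+1}$ of its spanned simplices inside $\mathcal T_P$ --- in which case every $Y\in\binom{X}{m}$ already has a spanned simplex pierced by $P$, so $\conv(Y)\cap P\neq\emptyset$ and we stop --- or some such ``$\mathcal T_P$-clique'' $Y$ exists.

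In the latter case I would apply the point selection theorem to the $m$-point set $Y$, obtaining a point $z$ contained in at least $c_d\binom{m}{d+1}$ simplices spanned by $Y$, and add $z$ to $P$. Because $Y$ is a $\mathcal T_P$-clique, every one of those simplices lay in $\mathcal T_P$ a moment earlier and leaves it now; so each iteration adds one point to $P$ and deletes at least $c_d\binom{m}{d+1}$ simplices from $\mathcal T_P$. Since $|\mathcal T_P|\le\binom{n}{d+1}$ throughout, and $c_d\binom{m}{d+1}\ge 1$ for $n$ above the threshold, the process terminates after at most $\binom{n}{d+1}/(c_d\binom{m}{d+1})=O_d((n/m)^{d+1})=O_d(\epsilon^{-(d+1)})$ steps, and at termination $P$ is a weak $\epsilon$-net of the asserted size.

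I expect the only genuine subtlety --- the step a careful reader should pause on --- is why the point selection theorem may be invoked inside the iteration at all: it concerns all the simplices of a point set, not an arbitrary surviving subfamily, so one cannot simply re-apply it to $\mathcal T_P$. Passing first to a clique $Y$, all of whose simplices survive, is exactly the fix: it certifies that the simplices the theorem returns are still in $\mathcal T_P$ and therefore genuinely shrink it, which is what powers the termination count. The remaining pieces --- the reduction to $|Y|=m$, the treatment of small $n$, and the binomial estimate $\binom{n}{d+1}/\binom{m}{d+1}=O_d(\epsilon^{-(d+1)})$ for $n$ large --- are routine.
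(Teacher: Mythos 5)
Your greedy argument --- repeatedly find an unpierced $\lceil\epsilon n\rceil$-subset, apply the point selection theorem to it, add the deep point, and charge the step against the $c_d\binom{\lceil\epsilon n\rceil}{d+1}$ newly pierced simplices out of $\binom{n}{d+1}$ total --- is correct and is exactly the original algorithmic proof of Alon--B\'ar\'any--F\"uredi--Kleitman that this paper cites and then mirrors in its proof of \cref{thm:vol-eps-nets}. No substantive differences to report.
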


Recently, a few papers \cite{quant-p-q-theorems, quant-frac-pq, improved-quant-frac-helly} have introduced and applied \emph{quantitative} versions of this theorem, which produce a transversal by sets of positive volume.

\begin{theorem*}[Jung--Nasz\'odi \cite{quant-frac-pq}]\label{jung-eps-net}
    For every finite family $\mathcal F$ of volume-1 convex sets, there is a family $\mathcal S$ of at most $O_d(\epsilon^{-d^2(d+3)^2/4})$ sets, each of volume at least $d^{-d}$, such that $\conv(\mathcal G)$ contains a set in $\mathcal S$ whenever $\mathcal G$ contains at least $\epsilon |\mathcal F|$ sets in $\mathcal F$.
\end{theorem*}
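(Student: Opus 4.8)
The plan is to run the greedy ``covering'' argument of Alon, B\'ar\'any, F\"uredi, and Kleitman, with the point selection theorem replaced by the volumetric selection theorem of Jung and Nasz\'odi quoted above. Write $n=|\mathcal F|$ and $k=d^2(d+3)^2/4$, and call a subfamily $\mathcal G\subseteq\mathcal F$ \emph{heavy} if $|\mathcal G|\ge\epsilon n$. Build $\mathcal S$ one set at a time: put $\mathcal S_0=\emptyset$, and as long as some heavy $\mathcal G$ has no set of the current $\mathcal S_i$ inside $\conv(\mathcal G)$, apply the volumetric selection theorem to $\mathcal G$ to produce a convex $E$ with $\vol(E)\ge d^{-d}$ lying in $\conv(\mathcal H)$ for at least $c_d\binom{|\mathcal G|}{k}$ tuples $\mathcal H\in\binom{\mathcal G}{k}$, and set $\mathcal S_{i+1}=\mathcal S_i\cup\{E\}$. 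When the process halts, every heavy family has a set of $\mathcal S$ inside its convex hull, so $\mathcal S$ is a weak $\epsilon$-net by sets of volume $\ge d^{-d}$; the whole question is how many steps this takes.

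The key point is that each step covers a batch of \emph{new} $k$-tuples. Call $\mathcal H\in\binom{\mathcal F}{k}$ \emph{covered} by $\mathcal S$ if some set of $\mathcal S$ lies in $\conv(\mathcal H)$, a property monotone in $\mathcal S$. When step $i+1$ picks the heavy family $\mathcal G$, no set of $\mathcal S_i$ lies in $\conv(\mathcal G)$; since $\conv(\mathcal H)\subseteq\conv(\mathcal G)$ for every $\mathcal H\subseteq\mathcal G$, \emph{none} of the $\binom{|\mathcal G|}{k}$ subtuples of $\mathcal G$ is covered by $\mathcal S_i$. Adding $E$ covers the $\ge c_d\binom{|\mathcal G|}{k}$ of them whose hull contains $E$. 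Hence the number of covered $k$-tuples strictly increases, by at least $c_d\binom{\lceil\epsilon n\rceil}{k}$, at each step, and it never exceeds $\binom nk$; so the process stops after at most $\binom nk\big/\bigl(c_d\binom{\lceil\epsilon n\rceil}{k}\bigr)$ steps. A crude binomial estimate gives $\binom nk/\binom{\lceil\epsilon n\rceil}{k}\le(2/\epsilon)^k$ whenever $\epsilon n\ge 2k$, so in that regime $|\mathcal S|\le(2/\epsilon)^k/c_d=O_d(\epsilon^{-d^2(d+3)^2/4})$.

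It remains to treat small families, $n<2k/\epsilon$. If $\lceil\epsilon n\rceil\ge k$ one simply reruns the same greedy process: each step still covers at least one new $k$-tuple, and there are at most $\binom nk\le\binom{2k/\epsilon}{k}=O_d(\epsilon^{-k})$ of them. If $\lceil\epsilon n\rceil<k$ the selection theorem is unavailable, but then $n<k/\epsilon$, and one takes $\mathcal S$ to contain a single member (a volume-$1$, hence volume-$\ge d^{-d}$, set) of $\conv(\mathcal G')$ for each of the at most $\binom{n}{\lceil\epsilon n\rceil}\le n^{\lceil\epsilon n\rceil}\le(k/\epsilon)^k=O_d(\epsilon^{-k})$ minimal heavy families $\mathcal G'$; every heavy family contains such a $\mathcal G'$ and therefore contains the corresponding set of $\mathcal S$.

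The only delicate ingredient is the counting in the second paragraph, and specifically the observation that selecting from an \emph{uncovered} heavy family is what guarantees the newly covered $k$-tuples are genuinely new, so that progress is monotone and cannot be double-counted; everything else is routine bookkeeping and loose binomial bounds. I would expect this scheme to reproduce the stated $O_d(\epsilon^{-d^2(d+3)^2/4})$ bound, and feeding \cref{thm:selection-d+1} or \cref{thm:selection-2d} into the same argument in place of the Jung--Nasz\'odi selection theorem yields the much smaller $\epsilon$-net bounds advertised in the abstract.
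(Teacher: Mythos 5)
Your proposal is correct and is essentially the paper's own argument: Theorem \ref{thm:vol-eps-nets} is proved by exactly this greedy covering scheme (add an ellipsoid from the selection theorem applied to an unpierced heavy family, note that each addition covers at least $c_d\binom{\epsilon n}{\alpha}$ previously uncovered $\alpha$-tuples, and divide into $\binom{n}{\alpha}$), and plugging in the Jung--Nasz\'odi selection lemma gives the stated bound. Your treatment of the small-$n$ regimes is more explicit than the paper's, but the core counting is identical.
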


A main research topic related to weak $\epsilon$-nets for points is to establish better bounds on $c(d,\epsilon)$. The original bound \cite{epsilon-nets-original} is slightly better than $O_d(\epsilon^{-(d+1)})$; after a series of works \cite{eps-net-upper-d, matousek-eps-net-upper}, the current best general upper bound is $c(d,\epsilon) = o_{\epsilon}(\epsilon^{-(d-1/2)})$ when $d \geq 4$ \cite{Rubin-epsilon-net-upper-bound}. (Somewhat better bounds are known for particular small dimensions.) The current best lower bound is $\Omega_d(\epsilon^{-1} \log(1/\epsilon)^{d-1})$ \cite{eps-net-lower-bound}.

Each of the selection theorems in the previous section implies a weak $\epsilon$-net theorem for volume, with varying trade-offs between the number of piercing sets and the volume of those sets. The three results can be summarized as:

\begin{theorem}[Weak $\epsilon$-nets for volume]\label{thm:vol-eps-nets}
    Let $\epsilon > 0$. For each of the three choices
    \[\begin{array}{r @{{}={}\big(\,} c @{,\ \,} c @{\big)}}
        (v,\alpha) & d^{-d} & \frac{1}{2}d(d+3)+1\\
        (v,\alpha) & (5d^3)^{-d} & 2d\\
        (v,\alpha) &  4^{-d^2(1+o(1))} & d+1
    \end{array}\]
    the following statement is true. For every finite family $\mathcal F$ of volume-1 convex sets, there is a family $\mathcal S$ of $O_d(\epsilon^{-\alpha})$ sets, each of volume at least $v$, such that $\conv(\mathcal G)$ contains a set in $\mathcal S$ whenever $\mathcal G$ contains at least $\epsilon |\mathcal F|$ sets in $\mathcal F$.
\end{theorem}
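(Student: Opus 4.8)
The plan is to plug the selection theorems of the previous section into the classical greedy construction of weak $\epsilon$-nets due to Alon, B\'ar\'any, F\"uredi, and Kleitman \cite{epsilon-nets-original}. Fix one of the three pairs $(v,\alpha)$, a finite family $\mathcal F$ of $n$ volume-$1$ convex sets, and $\epsilon > 0$; we may assume $\epsilon \le 1$. I would first dispatch the degenerate range $\epsilon n \le 2\alpha$ by taking $\mathcal S = \mathcal F$: every member of $\mathcal F$ has volume $1 \ge v$, any $\mathcal G$ with $|\mathcal G|\ge \epsilon n$ is nonempty and so has some $G \in \mathcal G \subseteq \mathcal S$ with $G \subseteq \conv(\mathcal G)$, and $|\mathcal S| = n \le 2\alpha\,\epsilon^{-1} \le 2\alpha\,\epsilon^{-\alpha} = O_d(\epsilon^{-\alpha})$ since $\alpha = \alpha(d) \ge 1$. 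So from now on $\epsilon n > 2\alpha$.

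Next I would build $\mathcal S$ greedily. Starting from $\mathcal S = \emptyset$, as long as some subfamily $\mathcal G_0 \subseteq \mathcal F$ with $|\mathcal G_0| \ge \epsilon n$ has $\conv(\mathcal G_0)$ disjoint from every member of $\mathcal S$, apply the selection theorem matching $(v,\alpha)$ --- one of \cref{thm:selection-d(d+3)/2,thm:selection-2d,thm:selection-d+1} --- to $\mathcal G_0$ to obtain a convex set $E$ of volume at least $v$ that lies in $\conv(\mathcal H)$ for at least $c_d\binom{|\mathcal G_0|}{\alpha}$ choices of $\mathcal H \in \binom{\mathcal G_0}{\alpha}$; add $E$ to $\mathcal S$ and continue. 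This is legitimate because $\mathcal G_0 \subseteq \mathcal F$ again consists of volume-$1$ sets and $|\mathcal G_0| \ge \epsilon n > 2\alpha > \alpha$. When the process stops, every $\mathcal G$ with $|\mathcal G| \ge \epsilon n$ has a member of $\mathcal S$ inside $\conv(\mathcal G)$ and every member of $\mathcal S$ has volume at least $v$, so everything reduces to bounding the number of iterations.

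That bound comes from a charging argument over the $\binom n\alpha$ subfamilies of size $\alpha$: I claim each such subfamily can be charged at most once, where a step charges the $\alpha$-subfamilies $\mathcal H$ for which the newly produced $E$ satisfies $E \subseteq \conv(\mathcal H)$. Indeed, if $\mathcal H \in \binom{\mathcal G_0}{\alpha}$ had been charged at an earlier step with set $E' \in \mathcal S$, then $E' \subseteq \conv(\mathcal H) \subseteq \conv(\mathcal G_0)$ because $\mathcal H \subseteq \mathcal G_0$, contradicting the choice of $\mathcal G_0$. So each step charges at least $c_d\binom{\lceil \epsilon n\rceil}{\alpha}$ fresh $\alpha$-subfamilies, whence the number of steps is at most $\binom n\alpha\big/\big(c_d\binom{\lceil\epsilon n\rceil}{\alpha}\big)$; writing this ratio as $c_d^{-1}\prod_{i=0}^{\alpha-1}\frac{n-i}{\lceil\epsilon n\rceil-i}$ and using $\epsilon n > 2\alpha$ to bound each factor by $\tfrac{n}{\epsilon n-\alpha}\le 2/\epsilon$, it is $O_d(\epsilon^{-\alpha})$ since $\alpha$ depends only on $d$. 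This gives $|\mathcal S| = O_d(\epsilon^{-\alpha})$ and finishes the proof. I expect no real obstacle here: the only step needing a line of justification is the charging claim, which rests on the monotonicity $\conv(\mathcal H) \subseteq \conv(\mathcal G_0)$ for $\mathcal H \subseteq \mathcal G_0$ combined with the stopping rule --- all the substance of the theorem is already carried by \cref{thm:selection-d+1,thm:selection-2d,thm:selection-d(d+3)/2}.
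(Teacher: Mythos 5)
Your proposal is correct and follows essentially the same route as the paper: the greedy algorithm of Alon--B\'ar\'any--F\"uredi--Kleitman driven by the three selection theorems, with the iteration count bounded by charging each added set to the $c_d\binom{\epsilon n}{\alpha}$ fresh $\alpha$-tuples it pierces. One cosmetic fix: the loop condition should be ``$\conv(\mathcal G_0)$ does not \emph{contain} a member of $\mathcal S$'' rather than ``is \emph{disjoint} from every member of $\mathcal S$,'' so that termination actually yields the containment conclusion you assert; the charging argument is unaffected by this change.
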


The exponent for the case $v=d^{-d}$ can be reduced to $\alpha = \frac{1}{2}d(d+3)-\frac{1}{2}$ by a different argument; we sketch this argument after proving \cref{thm:vol-eps-nets}.

The interest in a volumetric weak $\epsilon$-net theorem was first inspired by volumetric versions of the \emph{$(p,q)$-theorem}, whose proofs use volumetric weak $\epsilon$-nets as a crucial ingredient. A family $\mathcal F$ is \emph{$(p,q)$-intersecting} if every collection of $p$ sets in $\mathcal F$ contains $q$ sets whose intersection is nonempty. The classical $(p,q)$-theorem \cite{original-pq} states that \textit{for any $p \geq q \geq d+1$, there is a constant $c(p,q,d)$ such that for any $(p,q)$-intersecting family $\mathcal F$ of convex sets, there is a set $X$ of at most $c(p,q,d)$ points such that every set in $\mathcal F$ contains at least one point of $X$.} In other words, any $(p,q)$-intersecting family of convex sets has a piercing set whose size is bounded by $p$, $q$, and $d$, but is independent of the family itself. (For example, the equation $c(d+1,d+1,d) = 1$ is another way of stating Helly's theorem on the intersection of convex sets \cite{Helly}.)

A volumetric variant assumes that each set in $\mathcal F$ has volume at least 1 and replaces $P$ by a family $\mathcal S$ of sets, each of volume at least $v(d) > 0$. Rolnick and Sober\'on \cite{quant-p-q-theorems} proved a version with $v(d) = 1 - \delta$ but $q \gg d+1$; more recent work proved volumetric $(p,q)$ theorems for $q \geq 3d+1$ \cite{quant-frac-pq} and $q \geq d+1$ \cite{improved-quant-frac-helly}, though these results provide a much smaller $v(d)$. Improving the volumetric weak $\epsilon$-net theorem yields a direct improvement on the size of the transversal guaranteed by the volumetric $(p,q)$-theorem. For example, the $(p,q)$-theorem in \cite{quant-frac-pq}, whose results are easiest to quantify, yields a transversal $\mathcal S$ with at most $(cp)^{3d^6/4 + O(d^5)}$ sets (for some constant $c>0$). Replacing their weak $\epsilon$-net theorem with the net of $O_d(\epsilon^{-(d+1)})$ sets in \cref{thm:vol-eps-nets} improves the transversal size to $(cp)^{3d^3 + O(d^2)}$.

\subsection*{Related work}

This area of volumetric combinatorial geometry can be traced back to a 1982 paper of B\'ar\'any, Katchalski, and Pach \cite{Barany:1982ga} which proved quantitative versions of Helly's, specifically that \textit{if the intersection of every $2d$ or fewer elements of a finite family $\mathcal F$ of convex sets in $\R^d$ has volume greater than or equal to 1, then the volume of $\bigcap \mathcal F$ is at least $d^{-2d^2}$.} They conjectured that $\vol\big(\bigcap \mathcal F\big) \geq (Cd)^{-c d}$ for some constants $c, C > 0$. Nasz\'odi proved this conjecture 2016 with $c=2$\cite{Naszodi:2016he} (and Brazitikos improved this to $c=3/2$ \cite{brazitikos-vol-helly}). This breakthrough has renewed interest over the last decade in volumetric and quantitative theorems in combinatorial geometry.

Helly's theorem itself has a much broader reach, and it forms the basis for an astounding variety of research in geometry, topology, and combinatorics; for an overview of these many research directions, see the surveys \cite{Danzer-survey,barany-helly-survey,Soberon-Helly-survey}.

\subsection*{Structure of the paper}

In \cref{sec:selection}, we prove \cref{thm:selection-d(d+3)/2,thm:selection-2d,thm:selection-d+1,thm:vol-eps-nets}. \cref{thm:vol-hom-sel} is proven in \cref{sec:hom-sel}. 

The proofs in this paper are flexible enough to produce quantitative selection (and thus weak $\epsilon$-net) theorems for other parameters, such as diameter. \cref{sec:quant-extensions} discusses these variations, including a proof of a colorful Tverberg theorem for diameter, a result which has not yet appeared in the literature.

Throughout the paper, parameters with subscripts are always positive and their values depend only on the variables in the subscript. For example, ``volume at least $v_d$'' means the volume is at least a positive quantity that depends only on the dimension $d$. These parameters represent the same number within a single proof or discussion, but may stand for a different value in a different proof. (See footnote \ref{foot:c_d}.)

Finally, the proofs in this paper rely on many works in quantitative combinatorial geometry, as well as some structural hypergraph theory. To smooth the narrative, the statements of some of these results are housed in an appendix. Proofs citing these results may only state the immediate application, referring to the appendix for the complete statement and a citation to the original work.

\section{Selection theorems for weak \texorpdfstring{$\epsilon$}{epsilon}-nets}\label{sec:selection}

\subsection*{Selection theorems}

We  will start by proving \cref{thm:selection-d+1,thm:selection-2d,thm:selection-d(d+3)/2}, in reverse order. \cref{thm:selection-d(d+3)/2} is simplest and introduces ideas that we'll use in the proofs of \cref{thm:selection-d+1,thm:selection-2d}.

Our first tool is John's theorem, a key result in convex geometry that can transform general problems about volume of convex sets into problems about ellipsoids. We will use this theorem in each of the next three proofs.

\begin{theorem}[John \cite{john}]\label{thm:john}
    The volume of the largest ellipsoid that is contained in a convex set $C \subseteq \R^d$ is at least $d^{-d} \vol(C)$.
\end{theorem}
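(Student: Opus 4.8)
The plan is to reduce everything to the containment statement ``$C\subseteq d\,E$ for a suitable inscribed ellipsoid $E$'' and then read off the volume bound. We may assume $\vol(C)>0$, and, after replacing $C$ by its closure and noting that a convex set of finite positive volume in $\R^d$ is bounded, that $C$ is a convex body. First I would establish that a maximum-volume inscribed ellipsoid $E\subseteq C$ exists: ellipsoids contained in $C$ are parametrised by a centre and a positive semidefinite matrix ranging over a compact set once we work inside a fixed ball containing $C$, and volume is continuous, so the supremum is attained. Applying an affine map (which multiplies all volumes by the same factor and carries inscribed ellipsoids to inscribed ellipsoids), I may assume $E=B^d$, the Euclidean unit ball; so henceforth $B^d\subseteq C$ and $B^d$ is a largest inscribed ellipsoid. (Uniqueness of $E$ is not needed for the volume bound.)

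The heart of the argument is the first-order optimality condition at $B^d$, i.e.\ John's decomposition of the identity: there exist contact points $u_1,\dots,u_m\in S^{d-1}\cap\partial C$ and weights $c_1,\dots,c_m>0$ with $\sum_i c_iu_i=0$ and $\sum_i c_i\,u_iu_i^{\mathsf T}=I_d$, whence $\sum_i c_i=d$ by taking traces. Proving this is the real work: assuming no such weights exist, one separates $I_d$ from the convex hull of the matrices $u_iu_i^{\mathsf T}$ (augmented to encode the centring condition $\sum c_iu_i=0$) by a hyperplane in the space of symmetric matrices, and uses the separating functional to write down an explicit one-parameter family of ellipsoids $E_t\subseteq C$ with $\vol(E_t)>\vol(B^d)$ for small $t>0$, contradicting maximality. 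I expect this separation-and-perturbation step to be the main obstacle; the rest is bookkeeping.

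Granting the decomposition, for each $i$ the hyperplane $\{\,\langle y,u_i\rangle=1\,\}$ supports $B^d$ at $u_i$ and hence, since $B^d\subseteq C$, also supports $C$ there, so $\langle x,u_i\rangle\le 1$ for every $x\in C$. Fix $x\in C$ and set $s_i=1-\langle x,u_i\rangle\ge 0$. Then $\sum_i c_is_i=d-\langle x,\sum_i c_iu_i\rangle=d$ and $\|x\|^2=\sum_i c_i\langle x,u_i\rangle^2=\sum_i c_i(1-s_i)^2=\sum_i c_is_i^2-d$, while $\sum_i c_is_i^2\le(\max_i s_i)\sum_i c_is_i\le d(1+\|x\|)$ because $\langle x,u_i\rangle\ge-\|x\|$. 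Combining gives $\|x\|^2\le d\|x\|$, hence $\|x\|\le d$; that is, $C\subseteq d\,B^d$.

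Therefore $\vol(C)\le\vol(dB^d)=d^d\vol(B^d)$, i.e.\ $\vol(B^d)\ge d^{-d}\vol(C)$, and undoing the normalization of the first paragraph gives the claim for the original $C$. A more computational route to the containment avoids the abstract decomposition: if some $p\in C$ had $\|p\|=h>d$, then $\conv(B^d\cup\{p\})\subseteq C$, and an explicit optimization over ellipsoids of revolution about the axis through $p$ inscribed in this ``ice-cream cone'' yields one of volume exceeding $\vol(B^d)$, again contradicting maximality of $B^d$; there the obstacle is just the bound on the optimal such ellipsoid.
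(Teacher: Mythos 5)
The paper offers no proof of this statement: it is imported as John's theorem with a citation and used as a black box, so there is nothing internal to compare your argument against. On its own terms, your proposal is the standard proof: reduce to a convex body, normalize the maximal inscribed ellipsoid to $B^d$, invoke John's decomposition of the identity at the contact points, deduce $C\subseteq d\,B^d$, and read off the volume ratio. The parts you carry out in full are correct: the reduction is sound (a convex set of positive finite volume has nonempty interior and cannot contain a ray, hence is bounded), and the chain $\|x\|^2=\sum_i c_i\langle x,u_i\rangle^2=\sum_i c_is_i^2-d\le d(1+\|x\|)-d=d\|x\|$ checks out. One step is stated a little too quickly: that $\{\langle y,u_i\rangle=1\}$ supports $C$ at $u_i$ follows because $C$ has \emph{some} supporting hyperplane at the boundary point $u_i$, that hyperplane must also support $B^d\subseteq C$ at $u_i$, and the supporting hyperplane of the ball at a point of $S^{d-1}$ is unique; worth spelling out, but not a gap.

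The genuinely unproven ingredient is the one you flag yourself: the existence of contact points $u_i$ and weights $c_i>0$ with $\sum_i c_iu_i=0$ and $\sum_i c_iu_iu_i^{\mathsf T}=I_d$. Your plan (separate $I_d$, suitably augmented to encode the centring condition, from the convex hull of the rank-one matrices in the space of symmetric matrices, then perturb $B^d$ along the separating functional to a larger inscribed ellipsoid) is the right one, but as written it is a sketch, and it is where essentially all of the work in John's theorem lives. If a self-contained proof is the goal, either complete that step or take your alternative route: the ice-cream-cone computation showing that if $p\in C$ with $\|p\|>d$, then $\conv(B^d\cup\{p\})$ already contains an ellipsoid of revolution of volume exceeding $\vol(B^d)$. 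That version bypasses the decomposition entirely and is arguably the shortest complete path to the $d^{-d}$ bound, at the cost of an explicit one-variable optimization. For the purposes of this paper, of course, the citation the author gives suffices.
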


\cref{thm:selection-d(d+3)/2}, as it turns out, can be proven by a relatively short parametrization argument. Every ellipsoid $E \subseteq \R^d$ can be written as $A(B^d) + x$ for a unique positive-definite matrix $A \in \R^{d\times d}$ and vector $x \in \R^d$, where $B^d$ is the $d$-dimensional unit ball. So each ellipsoid in $\R^d$ corresponds to a point $(A,x) \in P\!D^d \times \R^d$, where $P\!D^d$ is the set of $d\times d$ positive-definite matrices. Since $P\!D^d$ is a convex subset of $\R^{d(d+1)/2}$, this provides a parametrization of the set of ellipsoids in $\R^d$ as a convex subset of $\R^{d(d+3)/2}$. To prove \cref{thm:selection-d(d+3)/2}, we apply the point selection theorem to this parametrized space.

\begin{proof}[Proof of \cref{thm:selection-d(d+3)/2}]
    By John's theorem, each set $F \in \mathcal F$ contains an ellipsoid $A_F (B^d) + x_F$ of volume at least $d^{-d}$; set $y_F \defeq (A_F, x_F) \in P\!D^d \times \R^d \subset \R^{d(d+3)/2}$ and $Y = \{y_F\}_{F \in \mathcal F}$. By the point selection theorem in $\R^{d(d+3)/2}$, there is a point $(A,x)$ that is in at least $c_d \binom{|\mathcal F|}{\frac{1}{2} d(d+3) + 1}$ simplices determined by $Y$. A short calculation confirms that if $(A,x) \in \conv\big\{(A_i,x_i)\}_{i=1}^m$, then $A(B^d) + x \subseteq \conv\big( A_i(B^d) + x_i \big)$. So $E\defeq A(B^d) + x \subseteq \R^d$ is an ellipsoid contained in $\conv(\mathcal G)$ for at least $c_d \binom{|\mathcal F|}{\frac{1}{2}d(d+3)+1}$ choices of $\mathcal G \in \binom{\mathcal F}{\frac{1}{2}d(d+3)+1}$. Moreover, since the determinant is log-concave on the set of positive-definite matrices, $\vol(E) \geq d^{-d}$.
\end{proof}

The proofs of the remaining selection theorems rely on the volumetric version of the \emph{fractional Helly theorem}, which says that whenever a positive proportion of $k$-tuples of sets in $\mathcal F$ are intersecting, there is an intersecting subfamily of $\mathcal F$ with a positive proportion of the sets. There are several versions that incorporate volume; the specific one we will use can be proven by another parametrization argument.\footnote{Strictly speaking, the parametrization argument gives the same result with $\frac{1}{2} d(d+3)+1$ instead of $\frac{1}{2} d(d+3)$. Sarkar, Xue, and Sober\'on use topological tools to reduce the subset size by 1.}

\begin{proposition}[Fractional Helly for ellipsoids; Sarkar--Xue--Sober\'on \cite{pablo-concave-funcs}, Theorem 5.2.1]\label{thm:vol-frac-helly}
    For every $\alpha > 0$, there is a $\beta_{\alpha,d} > 0$ such that the following holds for any finite family $\mathcal F$ of convex sets in $\R^d$. If $\bigcap \mathcal G$ contains an ellipsoid of volume 1 for at least $\alpha \binom{|\mathcal F|}{d(d+3)/2}$ choices of $\mathcal G \in \binom{\mathcal F}{d(d+3)/2}$, then there is a subfamily $\mathcal F' \subseteq \mathcal F$ with $|\mathcal F'|\geq \beta_{\alpha,d} |\mathcal F|$ such that $\bigcap \mathcal F'$ contains an ellipsoid of volume 1.
\end{proposition}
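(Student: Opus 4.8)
The plan is to reduce this to the ordinary fractional Helly theorem by parametrizing ellipsoids, in the same spirit as the proof of \cref{thm:selection-d(d+3)/2}. Every ellipsoid in $\R^d$ is $A(B^d)+x$ for a unique pair $(A,x) \in P\!D^d\times\R^d$, and $\vol\big(A(B^d)+x\big)=\vol(B^d)\det(A)$. Viewing $P\!D^d\times\R^d$ as a convex subset of $\R^{d(d+3)/2}$ as in that proof, define, for each $F\in\mathcal F$, the set $K_F$ of all $(A,x)$ with $A(B^d)+x\subseteq F$ and $\det(A)\geq\vol(B^d)^{-1}$. Each $K_F$ is convex: the constraint $A(B^d)+x\subseteq F$ is preserved under taking midpoints because $\tfrac12 F+\tfrac12 F=F$ and $\big(\tfrac12 A_1+\tfrac12 A_2\big)(B^d)\subseteq\tfrac12 A_1(B^d)+\tfrac12 A_2(B^d)$, while the constraint $\det(A)\geq\vol(B^d)^{-1}$ cuts out a convex subset of $P\!D^d$ since $\det^{1/d}$ is concave there. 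The key dictionary entry is that, for any $\mathcal G\subseteq\mathcal F$, the sets $\{K_F\}_{F\in\mathcal G}$ have a common point if and only if $\bigcap\mathcal G$ contains an ellipsoid of volume $1$ (one direction is trivial, the other follows by shrinking an ellipsoid of volume at least $1$ down to volume exactly $1$).

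With this in place, the hypothesis says that an $\alpha$-fraction of the $d(d+3)/2$-element subfamilies $\{K_F\}_{F\in\mathcal G}$ have a common point, and we want a $\beta_{\alpha,d}$-fraction of the $K_F$ to have a common point $(A,x)$ --- for then $A(B^d)+x$ is an ellipsoid of volume $1$ contained in $\bigcap\mathcal F'$ for the corresponding subfamily $\mathcal F'$. Feeding $\{K_F\}$ directly into the classical fractional Helly theorem in $\R^{d(d+3)/2}$ yields exactly this, but with $d(d+3)/2+1$ in place of $d(d+3)/2$, since the classical theorem in $\R^N$ takes its hypothesis on $(N+1)$-tuples. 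So the parametrization by itself proves a slightly weaker statement.

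The remaining step --- shaving one vertex off the subset size --- is the crux, and here I would invoke the topological fractional Helly theorem for $k$-Leray complexes (Kalai--Meshulam) applied to the nerve of $\{K_F\}_{F\in\mathcal F}$, provided one knows this nerve is $\big(d(d+3)/2-1\big)$-Leray rather than merely $d(d+3)/2$-Leray. The extra collapsibility should come from the special shape of $K_F$: it is the intersection of a $(d-1)$-parameter family of affine inequalities, namely $\lvert A\nu\rvert+\langle x,\nu\rangle\leq h_F(\nu)$ over unit vectors $\nu$ (with $h_F$ the support function of $F$), together with the single concave inequality $\det(A)\geq\vol(B^d)^{-1}$. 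Establishing this Leray-number bound is precisely the topological input of Sarkar, Xue, and Sober\'on \cite{pablo-concave-funcs}, which I would cite rather than reprove; it is also where essentially all the difficulty lies, the rest being the routine parametrization above. Once the bound is in hand, the Kalai--Meshulam theorem produces a $\beta_{\alpha,d}$-fraction of the $K_F$ with a common point, and the dictionary entry from the first paragraph converts this into the desired subfamily $\mathcal F'$.
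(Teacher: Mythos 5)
Your proposal matches the paper's treatment of this result: the paper does not prove it but cites Sarkar--Xue--Sober\'on, noting in a footnote that the parametrization argument yields the weaker version with $\tfrac12 d(d+3)+1$ in place of $\tfrac12 d(d+3)$ and that the reduction by one is exactly the topological input of the cited work. Your parametrization, the convexity of the sets $K_F$, and your identification of the Leray-number bound as the one step that must be imported are all consistent with that outline, so this is essentially the same approach.
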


Jung and Nasz\'odi later provided a different proof of the same result \cite[Proposition 1.5]{quant-frac-pq}.

\begin{proof}[Proof of \cref{thm:selection-2d}]
    We will apply the fractional Helly theorem to the family $\mathcal C \defeq \big\{\! \conv(\mathcal G) : \mathcal G \in \binom{\mathcal F}{2d} \big\}$, providing an ellipsoid $\widetilde E$ and a positive proportion of $\mathcal G \in \binom{\mathcal F}{2d}$ for which $\widetilde E \subseteq \conv(\mathcal G)$. Thus, we need to show that a positive proportion of the $\frac{1}{2}d(d+3)$-tuples of sets in $\mathcal C$ intersect with large volume.
    
    From John's theorem, we know that each set in $\mathcal F$ contains an ellipsoid of volume at least $d^{-d}$. Another parametrization argument yields a Tverberg-type theorem for ellipsoids (\cref{thm:ellipse-tverberg}), which tells us that any $f(d)=\big(\frac{d(d+3)}{2}+1\big)(\frac{d(d+3)}{2}-1)+1$ of the ellipsoids can be partitioned into $r \defeq \frac{d(d+3)}{2}$ collections $\mathcal G_1,\dots, \mathcal G_{r}$ such that $\bigcap_{i=1}^{r} \conv(\mathcal G_i)$ contains an ellipsoid $E$ of volume $d^{-d}$. Our next step, the key new ingredient in this proof, is to reduce the size of each $\mathcal G_i$ so that it contains exactly $2d$ sets (and therefore lies in $\mathcal C$), while maintaining a large intersection.
    
    Take an affine transformation $A$ that maps $E$ to the unit ball $B^d$. For each $i \in [r]$, we have $B^d \subseteq \conv(A(F) : F \in \mathcal G_i)$. By a quantitative version of the Steinitz theorem (\cref{thm:quant-steinitz}), for each $i \in [r]$ there is a set of at most $2d$ points $X_i \subseteq A(\mathcal G_i)$ such that $\conv(X_i) \supseteq \frac{1}{5d^2} B^d$. Let $\mathcal G_i'$ denote a collection of $2d$ sets in $\mathcal G_i$ that cover $A^{-1}(X_i)$. Then $\conv(\mathcal G_i') \supseteq \conv(A^{-1}X_i) \supseteq \frac{1}{5d^2} E$ for each $i$.

    This shows that among any $f(d)$ sets in $\mathcal F$, there are $r$ disjoint subfamilies of size $2d$ such that the intersection of their convex hulls contains an ellipsoid of volume at least $(5d^3)^{-d}$. Each collection of $r$ families can be formed from at most $\binom{|\mathcal F|-2dr}{f(d)-2dr}$ different $f(d)$-tuples of sets from $\mathcal F$. Thus, the number of elements of $\binom{\mathcal C}{r}$ whose intersection contains an ellipsoid of volume $(5d^3)^{-d}$ is at least 
    \[
        \frac{\binom{|\mathcal F|}{f(d)}}{\binom{|\mathcal F|-2dr}{f(d)-2dr}}
        \geq \alpha_d \binom{\binom{|\mathcal F|}{2d}}{r}
        = \alpha_d \binom{|\mathcal C|}{r},
    \]
    for some $\alpha_d > 0$. By \cref{thm:vol-frac-helly}, there is a subset $\mathcal C' \subseteq \mathcal C$ with at least $\beta_d\, |\mathcal C| = \beta_d\, \binom{|\mathcal F|}{2d}$ sets such that $\bigcap \mathcal C'$ contains an ellipsoid of volume $(5d^{-3})^{-d}$. This is what we wanted to prove.
\end{proof}

To prove \cref{thm:selection-d+1}, we need to reduce the collection $\mathcal G_i$ even further, so that it contains only $d+1$ sets. To do so, we can no longer deal only with ellipsoids; we must parley with less docile convex sets. That is the following lemma's task.

\begin{lemma}\label{thm:vol-planes}
    For any finite sets $X_1,\dots,X_m \subseteq \R^d$, each with $k \geq d+1$ points, such that $\vol\big(\bigcap_{i=1}^m \conv(X_i)\big) \geq 1$, there are subsets $Y_i \subseteq X_i$ of size $|Y_i| = d+1$ such that $\vol\big(\bigcap_{i=1}^m \conv(Y_i)\big) \geq \big(m \binom{k}{d}\big)^{-d}$.
\end{lemma}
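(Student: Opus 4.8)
The plan is to cut all the polytopes $P_i \defeq \conv(X_i)$ simultaneously along one common hyperplane arrangement, and then pull a single large cell out of $K \defeq \bigcap_{i=1}^m P_i$. The point is that every simplex with vertices in $X_i$ is ``aligned'' with the hyperplanes spanned by $d$-subsets of $X_i$, and there are only $\binom{k}{d}$ of those per set, hence $\le m\binom kd$ in total; a pigeonhole on the cells of the arrangement they determine then yields exactly the claimed bound.

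\textbf{Setting up the arrangement.} Every facet of $P_i$ is the convex hull of $d$ of the points of $X_i$, so it lies on one of the at most $\binom{k}{d}$ hyperplanes spanned by $d$-point subsets of $X_i$; call this set of hyperplanes $\mathcal H_i$, and put $\mathcal A \defeq \mathcal H_1\cup\dots\cup\mathcal H_m$, so $|\mathcal A|\le N\defeq m\binom kd$. Two observations drive the proof. First, $K$ is the intersection of the facet halfspaces of the $P_i$, so every bounding hyperplane of $K$ lies in $\mathcal A$; consequently each full‑dimensional cell of the arrangement induced by $\mathcal A$ is either contained in $K$ or disjoint from $K$, and therefore $K$ is (the closure of) a union of such cells, giving $\vol(K)=\sum_{C\subseteq K}\vol(C)$. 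Second, an arrangement of $N$ hyperplanes in $\R^d$ has at most $\sum_{j=0}^{d}\binom Nj$ full‑dimensional cells, and for $d\ge 2$ one checks directly that $\sum_{j=0}^{d}\binom Nj\le N^d$ whenever $N\ge 2$ (here $N=m\binom kd\ge d+1\ge 3$). Combining the two, some cell $C\subseteq K$ satisfies $\vol(C)\ge \vol(K)/N^d\ge N^{-d}$.

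\textbf{Producing the simplices.} For each $i$, fix a triangulation $\mathcal T_i$ of $P_i$ all of whose vertices lie in $X_i$ (every polytope admits such a triangulation). Every wall of $\mathcal T_i$, and every facet of $P_i$, is spanned by $d$ points of $X_i$, hence lies on a hyperplane of $\mathcal H_i\subseteq\mathcal A$. Since the cell $C$ avoids every hyperplane of $\mathcal A$ and is contained in $P_i$, it must lie inside a single top‑dimensional simplex $T_i$ of $\mathcal T_i$. Writing $T_i=\conv(Y_i)$ with $Y_i\subseteq X_i$ and $|Y_i|=d+1$, we obtain $C\subseteq\bigcap_{i=1}^m\conv(Y_i)$, so $\vol\bigl(\bigcap_{i=1}^m\conv(Y_i)\bigr)\ge\vol(C)\ge N^{-d}=\bigl(m\binom kd\bigr)^{-d}$, which is the statement.

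\textbf{Where the work is.} I do not expect a genuine obstacle: once the arrangement viewpoint is in place the argument is essentially bookkeeping. The points needing care are (i) the elementary cell‑count inequality $\sum_{j\le d}\binom Nj\le N^d$ over the relevant range of $N$, together with dispatching $d=1$ by hand (there one just takes $Y_i$ to be the two extreme points of $X_i$ that straddle the interval $K$, so that $\conv(Y_i)\supseteq K$); and (ii) the verification that the walls of an \emph{arbitrary} triangulation of $P_i$ lie on hyperplanes spanned by $d$-subsets of $X_i$ — this is exactly what forces the chosen cell $C$ to be trapped inside one simplex of each triangulation.
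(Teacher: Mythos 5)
Your proposal is correct and follows essentially the same route as the paper: form the arrangement of the at most $m\binom{k}{d}$ hyperplanes spanned by $d$-subsets of the $X_i$, extract by pigeonhole a cell of volume at least $\big(m\binom{k}{d}\big)^{-d}$ inside $\bigcap_i\conv(X_i)$, and observe that a cell avoiding all these hyperplanes must sit inside a single simplex on vertices of each $X_i$. The only (cosmetic) difference is that you trap the cell using a fixed triangulation of each $\conv(X_i)$, whereas the paper invokes Carath\'eodory's covering by all simplices; your explicit treatment of the cell-count bound and of the degenerate case $d=1$ is a welcome touch of care.
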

\begin{proof}
    Let $\mathcal H_i$ denote the set of all hyperplanes determined by $d$ points in $X_i$, and define $\mathcal H = \bigcup_{i=1}^m \mathcal H_i$. Then $|\mathcal H| \leq m \binom{k}{d}$, and these hyperplanes divide $\R^d$ into at most $\big(m \binom{k}{d}\big)^d$ connected regions (see, for example,  \cite[Proposition 6.1.1]{matousek-lectures}). Thus, there is a convex set $C \subseteq \bigcap_{i=1}^m \conv(X_i)$ with volume at least $\big(m \binom{k}{d}\big)^{-d}$ whose interior does not intersect any of the hyperplanes in $\mathcal H$. By Carath\'eodory's theorem, the simplices determined by the points of $X_j$ cover the set $\bigcap_{i=1}^m \conv(X_i)$. Because the interior of $C$ intersects no hyperplane determined by $X_j$, there is a simplex $Y_j \subseteq X_j$ such that $C \subseteq \conv(Y_j)$. Therefore $\vol\big(\bigcap_{i=1}^m \conv(Y_i) \big) \geq \vol(C) \geq \big(m \binom{k}{d}\big)^{-d}$.
\end{proof}

\begin{proof}[Proof of \cref{thm:selection-d+1}]
    In the proof of \cref{thm:selection-2d}, use \cref{thm:vol-planes} to find sets $Y_i \subseteq X_i$ of size $d+1$; then (re)define $\mathcal G_i'$ as a collection of $d+1$ sets in $\mathcal G_i$ that covers $A^{-1}(Y_i)$. There is a set $C$ with
    \[
        \vol(C)
        \geq \bigg( \frac{1}{2}d(d+3) \binom{2d}{d}\bigg)^{-d} \vol\Big(\frac{1}{5d^2} E\Big)
        \geq \Big(\frac{1}{5} d^5 2^{2d} \Big)^{-d}
    \]
    such that $\conv(\mathcal G_i') \supseteq C$ for every $i$. After taking the John ellipsoid inside $C$, the rest of the proof may be copied exactly, with every instance of $2d$ replaced by $d+1$.
\end{proof}

Since we will make use of this technique again in \cref{sec:hom-sel_quant-lemmas}, it will be helpful to record it as a lemma:

\begin{lemma}\label{thm:refining-vol-tv}
    For any families $\mathcal G_1,\dots, \mathcal G_k$ of sets in $\R^d$ such that $\bigcap_{i=1}^k \conv(\mathcal G_i)$ has volume at least 1, there are subfamilies $\mathcal G_i' \subseteq \mathcal G_i$, each with at most $d+1$ sets, such that $\bigcap_{i=1}^k \conv(\mathcal G_i')$ has volume at least $(\frac{1}{5} d^3 k 2^{2d} )^{-d}$
\end{lemma}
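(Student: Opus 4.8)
The plan is to repackage, as a standalone lemma, the two-stage refinement that is already embedded in the proofs of \cref{thm:selection-2d,thm:selection-d+1}: first pass from the families $\mathcal G_i$ to subfamilies of size at most $2d$ using John's theorem together with a quantitative Steinitz theorem, then pass from those to subfamilies of size at most $d+1$ using \cref{thm:vol-planes}. Each stage shrinks $\bigcap_i\conv(\mathcal G_i)$ by only a controlled volume factor, and the product of the two losses is the bound in the statement.

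For the first stage: by John's theorem (\cref{thm:john}), $\bigcap_{i=1}^k\conv(\mathcal G_i)$ contains an ellipsoid $E$ with $\vol(E)\ge d^{-d}$. Fix an affine map $A$ with $A(E)=B^d$, so that $B^d\subseteq\conv\big(\bigcup_{F\in\mathcal G_i}A(F)\big)$ for every $i$. Applying the quantitative Steinitz theorem (\cref{thm:quant-steinitz}) to the point set $\bigcup_{F\in\mathcal G_i}A(F)$ yields, for each $i$, a set $X_i$ of at most $2d$ of its points with $\conv(X_i)\supseteq\tfrac{1}{5d^2}B^d$; pulling back, $\widetilde X_i\defeq A^{-1}(X_i)$ is a set of at most $2d$ points, each contained in some member of $\mathcal G_i$, and $\bigcap_i\conv(\widetilde X_i)\supseteq A^{-1}\big(\tfrac{1}{5d^2}B^d\big)$, an ellipsoid of volume $(5d^2)^{-d}\vol(E)\ge(5d^3)^{-d}$. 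For the second stage, run the argument of \cref{thm:vol-planes} on $\widetilde X_1,\dots,\widetilde X_k$: the hyperplanes spanned by $d$-element subsets of the $\widetilde X_i$ number at most $k\binom{2d}{d}$, so they cut $\bigcap_i\conv(\widetilde X_i)$ into regions, one of which has volume at least $\big(k\binom{2d}{d}\big)^{-d}(5d^3)^{-d}$, and Carath\'eodory's theorem supplies $(d+1)$-point subsets $\widetilde Y_i\subseteq\widetilde X_i$ whose hulls all contain that region. Finally, for each point of $\widetilde Y_i$ pick a member of $\mathcal G_i$ containing it; the at most $d+1$ chosen members form $\mathcal G_i'$, and $\conv(\mathcal G_i')\supseteq\conv(\widetilde Y_i)$, so $\vol\big(\bigcap_i\conv(\mathcal G_i')\big)\ge\big(k\binom{2d}{d}\big)^{-d}(5d^3)^{-d}$; the estimate $\binom{2d}{d}\le 2^{2d}$ then puts this in the form recorded in the statement.

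The one point that needs a little care — and the place a reader might pause — is that \cref{thm:vol-planes} is stated under the normalization $\vol\big(\bigcap_i\conv(X_i)\big)\ge 1$, whereas here $\bigcap_i\conv(\widetilde X_i)$ has volume only $(5d^3)^{-d}<1$, and moreover the $\widetilde X_i$ need not all have the same cardinality. Neither is an obstacle: the hypothesis and conclusion of \cref{thm:vol-planes} both scale by $t^d$ under the common dilation $X_i\mapsto tX_i$, so applying the lemma to $\{t\widetilde X_i\}$ with $t=\vol\big(\bigcap_i\conv\widetilde X_i\big)^{-1/d}$ and dilating back gives the scale-free inequality $\vol\big(\bigcap_i\conv\widetilde Y_i\big)\ge\big(k\binom{2d}{d}\big)^{-d}\vol\big(\bigcap_i\conv\widetilde X_i\big)$ used above, and the proof of \cref{thm:vol-planes} goes through verbatim with sets $\widetilde X_i$ of differing sizes all at most $2d$ (the hyperplane count $\sum_i\binom{|\widetilde X_i|}{d}\le k\binom{2d}{d}$ is all that is used). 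Two remaining trivialities: the quantitative Steinitz theorem is invoked on an infinite point set, which is legitimate since it holds for arbitrary subsets of $\R^d$; and if some $\widetilde X_i$ already has at most $d+1$ points one simply takes $\mathcal G_i'$ to be members of $\mathcal G_i$ covering it. Beyond the bookkeeping, there is nothing to prove — the lemma is exactly the composition of \cref{thm:john,thm:quant-steinitz,thm:vol-planes}, extracted so that it can be cited in \cref{sec:hom-sel_quant-lemmas}.
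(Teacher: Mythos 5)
Your proof is correct and is exactly the argument the paper intends: the lemma is recorded without a separate proof as a restatement of the technique in the proofs of \cref{thm:selection-2d,thm:selection-d+1} --- John's theorem plus the quantitative Steinitz theorem to reach $2d$-element subfamilies, then \cref{thm:vol-planes} to reach $d+1$ --- and your explicit handling of the rescaling and of the unequal cardinalities is the right way to make that composition rigorous. The only discrepancy is that your computation yields $\big(5 d^3 k 2^{2d}\big)^{-d}$ rather than the stated $\big(\tfrac{1}{5} d^3 k 2^{2d}\big)^{-d}$; this factor-of-$25^d$ gap traces to the paper's own statement (an apparent typo, since the identical computation underlies the paper's later applications of the lemma), not to any error in your argument.
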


\subsection*{Weak \texorpdfstring{$\epsilon$}{epsilon}-nets}

Our proof of \cref{thm:vol-eps-nets} follows the original algorithmic proof for existence of weak $\epsilon$-nets in \cite{epsilon-nets-original}. 

\begin{proof}[Proof of \cref{thm:vol-eps-nets}]
    Let $(v,\alpha)$ be one of the options presented in the theorem statement. We construct a weak $\epsilon$-net via a greedy algorithm, starting from $\mathcal S = \emptyset$. At each stage, if some collection $\mathcal F' \subseteq \mathcal F$ of size $\epsilon |\mathcal F|$ is not yet pierced by an ellipsoid in $\mathcal S$, use the appropraite volumetric selection lemma to choose an ellipsoid of volume $v$ that is contained in $\conv(\mathcal G)$ for at least $\beta_d\, \binom{|\mathcal F'|}{\alpha}$ choices of $\mathcal G \in \binom{\mathcal F'}{\alpha}$; then add this ellipsoid to the set $\mathcal S$. Every time an ellipsoid is added, the number of sets in $\binom{\mathcal F}{2d}$ whose convex hull is pierced by an ellipsoid in $\mathcal S$ increases by at least $\beta_d\, \binom{\epsilon |\mathcal F|}{\alpha}$. Therefore, this algorithm places at most
    \[
        \frac{\binom{|\mathcal F|}{\alpha}}{\beta_d\, \binom{\epsilon |\mathcal F|}{\alpha}}
        \leq c_d \epsilon^{-\alpha}
    \]
    ellipsoids in $\mathcal S$ before all $\epsilon|\mathcal F|$-tuples are pierced.
\end{proof}

\begin{remark}[Improving the piercing number for $v=d^{-d}$]
    For each set $F \in \mathcal F$, let $E_F$ be an ellipsoid of volume $d^{-d}$ contained in $F$. Using the parametrization of ellipsoids outlined at the beginning of this section, we obtain a point set $\{(A_F, x_F) : F \in \mathcal F\} \subseteq \R^{d(d+3)/2}$. Rubin's bound on weak $\epsilon$-nets \cite{Rubin-epsilon-net-upper-bound}, applied in $\R^{d(d+3)/2}$, shows that there is a set $P \subseteq \R^{d(d+3)/2}$ of $o_{\epsilon}(\epsilon^{d(d+3)/2 - 1/2})$ points (as long as $d \geq 2$) such $P \cap \conv(X) \neq \emptyset$ for any $X \subseteq \{x_F\}_{F \in \mathcal F}$ of size $|X| \geq \epsilon |\mathcal F|$. Because the set $S = \big\{ (A, x) : \vol\big(A(B^d) + x\big) \geq d^{-d} \big\}$ is convex,\footnote{This is because the determinant is log-concave on the set of positive-definite matrices.} the set $S \cap P$ is also a weak $\epsilon$-net and every point in $S \cap P$ corresponds to an ellipsoid in $\R^d$ of volume at least $d^{-d}$. This family of ellipsoids is a volumetric weak $\epsilon$-net for $\mathcal F$.
\end{remark}

\section{Homogeneous selection}\label{sec:hom-sel}

Our proof of \cref{thm:vol-hom-sel} draws inspiration from Pach's proof of the homogeneous point selection theorem \cite{homogeneous-selection}. To follow his approach, we will need to ``volumetrize'' several theorems in combinatorial geometry for which quantitative versions have not yet appeared, as well as prove a new volumetric Tverberg-type theorem. That is the work of \cref{sec:hom-sel_quant-lemmas}. In \cref{sec:hom-sel_proof}, we prove \cref{thm:vol-hom-sel}.

\subsection{More quantitative lemmas}\label{sec:hom-sel_quant-lemmas}

\subsubsection*{Tverberg's theorem with volume}

By parametrizing ellipsoids, Sarkar, Xue, and Sober\'on proved a colorful version of Tverberg's theorem with volumes:

\begin{theorem}[\!\!{\cite[Theorem 4.2.2]{pablo-concave-funcs}}]\label{thm:col-tv-vol}
    For any $\frac{1}{2} d(d+3)$ families $\mathcal E_1,\dots,\mathcal E_{\frac{1}{2} d(d+3)}$ in $\R^d$, each containing $2r-2$ ellipsoids of volume 1, there are $r$ disjoint transversals $\mathcal T_1,\dots,\mathcal T_r$, such that $\bigcap_{i=1}^r \conv(\mathcal T_i)$ contains an ellipsoid of volume 1.\footnote{The statement in \cite{pablo-concave-funcs} is slightly different, and only for the case when $r+1$ is a prime power. \cref{thm:col-tv-vol} follows from \cref{thm:col-tv-vol} by applying Bertrand's postulate. (See Corollary 2.4 in \cite{col-tv-opt} for details of this extension.)}
\end{theorem}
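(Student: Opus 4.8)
My plan is to adapt the ellipsoid-parametrization argument from the proof of \cref{thm:selection-d(d+3)/2}, restricting attention to unit-volume ellipsoids and replacing the colorless point selection theorem by an \emph{optimal colorful} Tverberg theorem. Write $\omega_d \defeq \vol(B^d)$. Each ellipsoid of $\R^d$ is $A(B^d)+x$ for a unique positive-definite $A$ and unique $x \in \R^d$, and $\vol\!\big(A(B^d)+x\big) = \omega_d\det A$; in particular the unit-volume ellipsoids of $\R^d$ form a $\big(\frac{1}{2}d(d+3)-1\big)$-dimensional family, since the center contributes $d$ parameters and the positive-definite matrices of a fixed determinant contribute $\frac{1}{2}d(d+1)-1$. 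Two elementary facts make this parametrization usable. First, it is \emph{convexity compatible}: if $(A,x)=\sum_i\lambda_i(A_i,x_i)$ with $\lambda_i\ge0$ and $\sum_i\lambda_i=1$, then for every $u\in B^d$ we have $Au+x=\sum_i\lambda_i(A_iu+x_i)\in\conv\big(\bigcup_i(A_i(B^d)+x_i)\big)$, hence $A(B^d)+x\subseteq\conv\big(\bigcup_i(A_i(B^d)+x_i)\big)$. Second, $\det$ is log-concave on the positive-definite matrices, so the parameters of volume-$\ge1$ ellipsoids form a convex set, and therefore the convex hull of the parameters of any family of unit-volume ellipsoids consists only of parameters of volume-$\ge1$ ellipsoids.

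I would first treat the case where $r+1$ is a prime power. Replacing each ellipsoid of each $\mathcal E_i$ by its parameter point yields $\frac{1}{2}d(d+3)$ color classes, each of size $2r-2\ge r$, inside the $\big(\frac{1}{2}d(d+3)-1\big)$-dimensional space of parameters of unit-volume ellipsoids. Discard surplus points so that each class has exactly $r$ points, and apply the optimal colorful Tverberg theorem of Blagojevi\'c--Matschke--Ziegler \cite{col-tv-opt}---this is where the prime-power hypothesis is needed---to these $\frac{1}{2}d(d+3)$ color classes of size $r$ in a space whose dimension is one less than the number of classes. The output is a family of $r$ pairwise disjoint transversals $T_1,\dots,T_r$ of the parameter points together with a point $q=(A,x)\in\bigcap_{j=1}^r\conv(T_j)$. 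Each $T_j$ corresponds to a transversal $\mathcal T_j$ of $\mathcal E_1,\dots,\mathcal E_{d(d+3)/2}$, and these transversals are pairwise disjoint. Put $E^{\ast}\defeq A(B^d)+x$. By convexity compatibility $E^{\ast}\subseteq\conv(\mathcal T_j)$ for every $j$, and since $q$ lies in $\conv(T_1)$, whose points all parametrize volume-$\ge1$ ellipsoids, log-concavity of $\det$ forces $\vol(E^{\ast})=\omega_d\det A\ge1$. Shrinking $E^{\ast}$ to volume exactly $1$ then produces a unit-volume ellipsoid contained in $\bigcap_{j=1}^r\conv(\mathcal T_j)$, as desired.

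For general $r\ge2$ I would invoke Bertrand's postulate, as in \cite{col-tv-opt}: let $p$ be the least prime exceeding $r$, so $p\le2r-1$ and $r'\defeq p-1$ satisfies $r\le r'\le2r-2$ with $r'+1$ a prime power. Each $\mathcal E_i$ still contains at least $r'$ ellipsoids, so the prime-power case applies with parameter $r'$ and yields $r'$ pairwise disjoint transversals whose convex hulls share a unit-volume ellipsoid; keeping any $r$ of them finishes the proof. (The slack ``$2r-2$'' rather than ``$r$'' in the hypothesis is exactly what this reduction consumes.) I expect the genuine obstacle to be the first move of the second paragraph: having available an optimal colorful Tverberg theorem whose hypotheses---number of color classes one more than the ambient dimension, class size of order $r$---match exactly what the parametrization of unit-volume ellipsoids delivers. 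This is the real content of the theorem (and the contribution of \cite{pablo-concave-funcs}); the convexity-compatibility computation, the log-concavity of the determinant, and the Bertrand reduction are all routine by comparison. A further technical point requiring care is that the parametrized points actually lie on the curved unit-determinant hypersurface rather than in an honest affine space of that dimension, so one must either use a form of the colorful Tverberg theorem valid for configurations not confined to a proper affine flat, or choose coordinates on that hypersurface in which the parametrization stays convexity compatible.
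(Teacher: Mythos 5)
First, note that the paper does not actually reprove this statement: it is quoted from Sarkar--Xue--Sober\'on, and the only argument supplied here is the footnote's reduction from general $r$ to the prime-power case via Bertrand's postulate. Your final paragraph reproduces that reduction correctly (the factor-of-two slack in ``$2r-2$'' is indeed exactly what it consumes), and your use of log-concavity of $\det$ to upgrade a common parameter point to a volume-$\geq 1$ ellipsoid is also sound.

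The gap is in the prime-power case, and it sits precisely at the point you defer to in your last sentence. Ellipsoids $A(B^d)+x$ are parametrized by the full-dimensional convex set $P\!D^d\times\R^d\subseteq\R^{d(d+3)/2}$, and the unit-volume ones form the level set $\{\det A=\omega_d^{-1}\}$, which is a \emph{curved} hypersurface, not an affine flat ($\det$ is a degree-$d$ polynomial, so its level sets are affine only for $d=1$). The optimal colorful Tverberg theorem with $m+1$ color classes is a statement about points in an $m$-dimensional \emph{affine} space; applied off the shelf to your parameter points it therefore demands $\frac{1}{2}d(d+3)+1$ color classes, one more than the theorem provides. Neither of your proposed escapes closes this: your strategy needs the $r$ parameter-space simplices to share a point, but with only $\frac{1}{2}d(d+3)$ classes each rainbow simplex spans at most a hyperplane of $P\!D^d\times\R^d$, so no affine Tverberg theorem in the ambient space applies; and there is no coordinate change flattening $\{\det A=\omega_d^{-1}\}$ that preserves the convexity-compatibility your argument relies on. Saving this one color class is the actual content of Sarkar--Xue--Sober\'on's Theorem 4.2.2: they build the concavity of $\det^{1/d}$ directly into the equivariant-topological configuration-space/test-map argument instead of invoking a point Tverberg theorem --- the same ``topological tools to reduce the subset size by 1'' phenomenon the paper flags in the footnote to \cref{thm:vol-frac-helly}. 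As written, your argument proves the statement only with $\frac{1}{2}d(d+3)+1$ color classes.
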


(The term ``disjoint transversals'' means that $\mathcal T_j$ contains exactly one set from each family $\mathcal E_i$, and $\mathcal T_{j_1} \cap \mathcal T_{j_2} = \emptyset$ whenever $j_1 \neq j_2$.) We could use \cref{thm:col-tv-vol} to prove a weaker version of the homogeneous selection theorem, with $\frac{1}{2}d(d+3)$ sets instead of $d+1$. To prove \cref{thm:vol-hom-sel}, we need to reduce the size of the transversals so each touches only $d+1$ families.

\begin{lemma}\label{thm:col-tv-vol-smaller}
    For any $\frac{1}{2} d(d+3)$ families $\mathcal F_1,\dots,\mathcal F_{d(d+3)/2}$ in $\R^d$, each containing $2\binom{\frac{1}{2} d(d+3)}{d+1}(r-1)$ sets of volume 1, there are $d+1$ families $\mathcal F_{j_1},\dots, \mathcal F_{j_{d+1}}$ and $r$ disjoint transversals $\mathcal T_1,\dots,\mathcal T_r$ of $\mathcal F_{j_1},\dots, \mathcal F_{j_{d+1}}$ such that $\bigcap_{i=1}^{r} \conv(\mathcal T_{i})$ contains an ellipsoid of volume $r^d d^{-d^2 (1 + o_d(1))}$.
\end{lemma}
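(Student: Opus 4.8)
The plan is to bootstrap off the colorful Tverberg theorem for ellipsoids (\cref{thm:col-tv-vol}): that result tolerates only $\frac12 d(d+3)$ color classes but forces the transversals to meet \emph{all} of them, so the real work is to whittle each transversal down from an $\frac12 d(d+3)$-tuple to a $(d+1)$-tuple while keeping a large common body and — the subtle point — arranging for the shrunken transversals to all meet the \emph{same} $d+1$ classes. Write $N=\frac12 d(d+3)$, $M=\binom{N}{d+1}$, and $\rho:=M(r-1)+1$, so that the hypothesis gives each family exactly $2\rho-2$ sets.

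First I would pass from convex sets to ellipsoids. By John's theorem (\cref{thm:john}) every $F\in\mathcal F_i$ contains an ellipsoid of volume at least $d^{-d}$; after applying the dilation $x\mapsto dx$ and shrinking, we may assume each member of each $\mathcal F_i$ is itself an ellipsoid of volume $1$, at the cost of dividing the final volume estimate by $d^d$ at the end. Now \cref{thm:col-tv-vol} applied with parameter $\rho$ (each family has the required $2\rho-2$ volume-$1$ ellipsoids) produces $\rho$ pairwise disjoint transversals $\mathcal S_1,\dots,\mathcal S_\rho$, each an $N$-tuple with one set from every $\mathcal F_i$, such that $\bigcap_{i=1}^{\rho}\conv(\mathcal S_i)$ contains an ellipsoid $E_0$ of volume $1$.

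Next I would shrink the transversals. Since $\vol\!\big(\bigcap_{i=1}^\rho\conv(\mathcal S_i)\big)\ge 1$, \cref{thm:refining-vol-tv} — applied with $\mathcal G_i:=\mathcal S_i$ and $k:=\rho$ — yields subfamilies $\mathcal S_i'\subseteq\mathcal S_i$ of size at most $d+1$ with $\vol\!\big(\bigcap_{i=1}^{\rho}\conv(\mathcal S_i')\big)\ge\big(\tfrac15 d^3\rho\,2^{2d}\big)^{-d}$; pad each $\mathcal S_i'$ back up to exactly $d+1$ sets using leftover members of $\mathcal S_i$ (this preserves disjointness and only enlarges the hulls). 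The remaining obstacle is that different $\mathcal S_i'$ may be transversals of different $(d+1)$-subsets of $\{\mathcal F_1,\dots,\mathcal F_N\}$ — and there are exactly $M$ of those subsets, which is precisely why the hypothesis builds in the surplus $\rho=M(r-1)+1$. By pigeonhole some $(d+1)$-subset $\{\mathcal F_{j_1},\dots,\mathcal F_{j_{d+1}}\}$ indexes at least $r$ of the $\mathcal S_i'$; relabel $r$ of them $\mathcal T_1,\dots,\mathcal T_r$. These are pairwise disjoint transversals of $\mathcal F_{j_1},\dots,\mathcal F_{j_{d+1}}$, and because intersecting fewer convex hulls only grows the region, $\bigcap_{i=1}^{r}\conv(\mathcal T_i)\supseteq\bigcap_{i=1}^{\rho}\conv(\mathcal S_i')$ still has volume at least $\big(\tfrac15 d^3\rho\,2^{2d}\big)^{-d}$. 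Taking a John ellipsoid inside and undoing the initial dilation gives an ellipsoid of volume at least $d^{-2d}\big(\tfrac15 d^3\rho\,2^{2d}\big)^{-d}$ inside $\bigcap_{i=1}^{r}\conv(\mathcal T_i)$.

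Finally I would estimate. Here $\rho\le Mr$ with $M=\binom{d(d+3)/2}{d+1}\le (ed)^{d+1}=d^{d(1+o_d(1))}$, while $2^{2d}$, $d^{3d}$, $d^{2d}$, and $r^d$ (for $r$ in the range where the lemma is applied) are all $d^{o_d(d^2)}$; so the exponent in $d^{-2d}\big(\tfrac15 d^3\rho\,2^{2d}\big)^{-d}$ collapses to $-d^2\big(1+o_d(1)\big)$, i.e.\ a bound of the stated form $r^d d^{-d^2(1+o_d(1))}$. I expect the main obstacle to be exactly the tension exploited above: reducing an $N$-tuple transversal to $d+1$ sets unavoidably forfeits control over which families it meets, and the whole argument only closes because the colorful Tverberg step can be run with a $\binom{N}{d+1}$-fold overcount of transversals to pigeonhole against — together with verifying that the three sources of volume loss (two invocations of John, the hyperplane-region count hidden in \cref{thm:refining-vol-tv}, and the pigeonhole overhead $\rho$) telescope into the clean exponent $d^2(1+o_d(1))$ rather than a larger one, which rests on the estimate $\binom{d(d+3)/2}{d+1}=d^{d(1+o_d(1))}$.
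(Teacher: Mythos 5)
Your proposal is correct and matches the paper's proof essentially step for step: John's theorem, then \cref{thm:col-tv-vol} run with the inflated parameter $k=\binom{d(d+3)/2}{d+1}(r-1)+1$, then \cref{thm:refining-vol-tv} to shrink each transversal to $d+1$ sets, then a pigeonhole over the $\binom{d(d+3)/2}{d+1}$ possible $(d+1)$-subsets of families, and finally the volume bookkeeping. Your explicit identification of why the hypothesis carries the $\binom{N}{d+1}$ surplus is exactly the paper's reasoning, and your final estimate (which really yields $r^{-d}d^{-d^2(1+o_d(1))}$) is consistent with the paper's calculation.
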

\begin{proof}
    First, we apply John's theorem (\cref{thm:john}) to each set in $\mathcal F_i$ to get $\mathcal \mathcal E_i$. Then, \cref{thm:col-tv-vol} with $k \defeq \binom{d(d+3)/2}{d+1}(r-1)+1$ produces disjoint transversals $\mathcal S_1, \dots, \mathcal S_k$ of $\mathcal E_1, \dots, \mathcal E_{d(d+3)/2}$ such that $\bigcap_{i=1}^k \conv(\mathcal S_i)$ contains an ellipsoid $E$ of volume at least $d^{-d}$. From \cref{thm:refining-vol-tv}, there are subfamilies $\mathcal S_i' \subset \mathcal S_i$, each containing exactly $d+1$ sets, such that the volume of $\bigcap_{i=1}^k \mathcal S'_i$ is at least $(\frac{1}{5} d^4 k 2^{2d})^{-d}$. Each of the sets $S'_i$ is a transversal of some collection of $d+1$ of the sets $\mathcal E_1,\dots,\mathcal E_{d(d+3)/2}$. By the pigeonhole principle, there is a set of $r$ subfamilies $\mathcal S_{i_1}',\dots, \mathcal S_{i_r}'$ which are transversals of the same collection of $d+1$ sets, say $\mathcal E_{j_1},\dots,\mathcal E_{j_{d+1}}$.
    
    Each transversal $\mathcal S'_{i_a}$ of $\mathcal E_{j_1},\dots,\mathcal E_{j_{d+1}}$ corresponds to a transversal $\mathcal T_a$ of $\mathcal F_{j_1},\dots,\mathcal F_{j_{d+1}}$, and
    \[
        \vol\big(\bigcap_{a=1}^r \mathcal T_a\big)
        \geq \vol\big( \bigcap_{i=1}^k \mathcal S'_i \big)
        \geq \big( \frac{1}{5} d^4 k 2^{2d}\big)^{-d}
        \geq r^d d^{-d^2 (1 + o_d(1))}.\qedhere
    \]
\end{proof}

\subsubsection*{Same-type lemma}


The \emph{order type} of a collection of $d+1$ points $x_1,\dots,x_{d+1} \in \R^d$ is the sign of $\det(x_1-x_{d+1}, x_2 - x_{d+1}, \dots, x_d - x_{d+1})$. Intuitively, if $x_1,\dots,x_{d+1}$ do not lie on a hyperplane, the order type tells you on which side of the hyperplane defined by $x_1,\dots,x_d$ the point $x_{d+1}$ lies. (If the $d+1$ points \emph{do} lie on a single plane, the order type is $0$.) The order type of a larger collection of points $x_1, \dots, x_m \in \R^d$ is the labelled collection of the order types of all subsets of $d+1$ points.

\begin{definition}
    We say that a collection of sets $C_1,\dots, C_{m} \subset \R^{d+1}$ \emph{has an order type} if every list $(x_1,\dots,x_{m}) \in C_1 \times \cdots \times C_{m}$ has the same order type. In that case, the \emph{order type} of $(C_1,\dots, C_m)$ is defined to be the order type of any point $(x_1,\dots,x_m) \in C_1 \times \cdots \times C_{m}$.
\end{definition}

The order type of a point set is enough to determine which points lie in the convex hull of others. In other words, if $(x_0,\dots,x_m)$ and $(y_0,\dots,y_m)$ have the same order type, then $x_0 \in \conv(x_1,\dots,x_m)$ if and only if $y_0 \in \conv(y_1,\dots,y_m)$. The same statement is true when sets replace points.

The \emph{same-type lemma} refines a collection of point sets so that the resulting sets themselves have an order type. In other words,

\begin{theorem}
    For any finite point sets $X_1,\dots,X_m \subset \R^d$, there are point sets $Y_i \subseteq X_i$ such that
    \begin{itemize}
        \item $|Y_i| \geq c_{d,m} |X_i|$, and 
        \item every list $(y_1, \dots, y_m) \in Y_1 \times \cdots \times Y_m$ has the same order type.
    \end{itemize}
    The constant $c_{d,m}$ is positive and depends only on $d$ and $m$.
\end{theorem}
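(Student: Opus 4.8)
The plan is to reduce the statement to a refinement that handles one $(d+1)$-subset of indices at a time, and then iterate. Recall that the order type of a transversal $(y_1,\dots,y_m)$ is recorded by the tuple of signs $\operatorname{sgn}\det\big(y_{i_1}-y_{i_{d+1}},\dots,y_{i_d}-y_{i_{d+1}}\big)$, one for each $(d+1)$-element subset $\{i_1<\dots<i_{d+1}\}\subseteq[m]$; there are $N:=\binom{m}{d+1}$ of these signs, so it suffices to pass to subsets $Y_i\subseteq X_i$, each retaining a constant fraction of $X_i$, on which all $N$ signs are constant over $Y_1\times\cdots\times Y_m$. The core of the argument is the following \emph{one-step lemma}: for any finite $Z_1,\dots,Z_{d+1}\subseteq\R^d$ there are $W_i\subseteq Z_i$ with $|W_i|\ge c_d|Z_i|$ such that $\operatorname{sgn}\det(w_1-w_{d+1},\dots,w_d-w_{d+1})$ takes the same value for every $(w_1,\dots,w_{d+1})\in W_1\times\cdots\times W_{d+1}$. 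Granting this, I would process the $N$ index subsets $T$ in any order: to handle $T$, apply the one-step lemma to the current sets $(X_i)_{i\in T}$, replace each of those by the returned subset, and leave the sets with index outside $T$ unchanged. Shrinking a set never destroys the constancy of a sign already arranged, so after all $N$ steps every sign is constant. Each $X_i$ is shrunk once for each of the $\binom{m-1}{d}$ subsets $T$ that contain $i$, so $|Y_i|\ge c_d^{\binom{m-1}{d}}|X_i|$, which gives $c_{d,m}=c_d^{\binom{m-1}{d}}$. (If $m\le d$ there are no such subsets and the statement is trivial.)

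The substance is therefore the one-step lemma, which is the classical same-type lemma of B\'ar\'any and Valtr for $d+1$ sets, and I would follow their approach. First, a routine preliminary reduction passes to a constant fraction of each $Z_i$ lying in general position, so that no transversal determinant vanishes and the only question is which of the two signs $+$, $-$ occurs. The main argument then runs by induction on $d$ with the ham-sandwich theorem (equivalently the centerpoint theorem) as the geometric engine: a hyperplane simultaneously bisecting $d$ of the sets, combined with the one-dimensional base case applied in a generic direction, lets one select a constant fraction of each $Z_i$ confined to a prescribed pattern of half-spaces, and this pattern determines on which side of the hyperplane spanned by any transversal $w_1,\dots,w_d$ the point $w_{d+1}$ must lie. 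Iterating across the $d$ coordinate directions pins the orientation down completely.

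The main obstacle is precisely the one-step lemma; everything preceding it is bookkeeping. Within that lemma, two points require care. The first is the general-position reduction: one must choose a single perturbation (or a single positive-fraction general-position refinement, itself proved by a similar ham-sandwich argument) that simultaneously resolves all transversal determinants, which is fine because there are only finitely many of them but does demand attention to the order of the quantifiers. The second is organizing the induction so that the fraction $c_d$ lost — once for the preliminary step and once at each of the $d$ levels of the ham-sandwich recursion — is an absolute function of $d$, independent of the sets $Z_i$. I would make no attempt to optimize $c_{d,m}$; the tower-type bound $c_d^{\binom{m-1}{d}}$ is harmless for every application in this paper.
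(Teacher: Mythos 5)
Your outer scheme is sound and is exactly how the paper organizes things (both for this statement, which it quotes as the known B\'ar\'any--Valtr same-type lemma, and for its volumetric analogue, \cref{thm:same-type-volume}): reduce to one $(d+1)$-element index set at a time, note that shrinking sets preserves signs already made constant, and multiply the losses over the $\binom{m-1}{d}$ subsets containing each index. The bookkeeping and the bound $c_{d,m}=c_d^{\binom{m-1}{d}}$ are fine.

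The problem is your sketch of the one-step lemma, which is where all the content lives. As you describe it, a single ham-sandwich cut produces ``a prescribed pattern of half-spaces'' which ``determines on which side of the hyperplane spanned by any transversal $w_1,\dots,w_d$ the point $w_{d+1}$ must lie,'' and then you ``iterate across the $d$ coordinate directions.'' That is not the structure of the argument, and $d$ iterations do not suffice. The correct engine is the separation criterion (\cref{thm:hyperplane-for-same-type}): $(W_1,\dots,W_{d+1})$ has a common order type once, for \emph{every} nonempty proper $I\subsetneq[d+1]$, the sets $\bigcup_{i\in I}W_i$ and $\bigcup_{j\notin I}W_j$ are separated by a hyperplane. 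One ham-sandwich cut buys you separation for exactly one bipartition $I$ (bisect the $d$ sets indexed by, say, $[d+1]\setminus\{d+1\}$, put at least half of $Z_{d+1}$ on one side, and assign each set to the half-space dictated by $I$). Up to complementation there are $2^d-1$ bipartitions, so you need $2^d-1$ successive cuts --- not $d$, and they are indexed by subsets of $[d+1]$, not by coordinate directions --- each costing a factor of $2$ per set, giving $c_d=2^{-(2^d-1)}$. The ``induction on $d$ with a one-dimensional base case'' you describe does not correspond to this argument, and the claim that one cut already pins down the orientation of every transversal is false: separation with respect to a single bipartition constrains nothing about the other bipartitions, and it is only the full family of separations that forces a common (and nondegenerate) order type. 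Once the one-step lemma is run this way, your preliminary general-position reduction is also unnecessary, since the separation criterion already rules out degenerate transversals. The paper's proof of \cref{thm:same-type-volume} makes this iteration over the $2^d-1$ bipartitions completely explicit, and you should model the one-step lemma on it.
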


Our next goal is to state and prove a volumetric version of the same-type lemma.

\begin{theorem}[Same-type lemma with volume]\label{thm:same-type-volume}
    For any $m \geq d+1$ collections $\mathcal F_1, \dots, \mathcal F_m$ of volume-1 convex sets in $\R^d$, there are collections $\mathcal G_1,\dots, \mathcal G_m$ such that, for each $i \in [m]$,
    \begin{itemize}
        \item each $G \in \mathcal G_i$ corresponds to a unique $F \in \mathcal F_i$ such that $G \subseteq F$,
        \item $|\mathcal G_i| \geq \delta_{d,m}|\mathcal F_i|$,
        \item $\vol(G) \geq v_{d,m}$ for each $G \in \mathcal G_i$, and
        \item every collection $(G_1,\dots,G_m) \in \mathcal G_1,\dots,\mathcal G_m$ has an order type, and all of these order types are the same.
    \end{itemize}
    The constants $v_{d,m}$ and $\delta_{d,m}$ are positive and depend only on $d$ and $m$.
\end{theorem}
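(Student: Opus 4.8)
The plan is to pass from the sets to John ellipsoids, bootstrap the classical (point) same-type lemma to pin down the combinatorial position of a bounded number of ``landmark'' points inside each set, and then shrink each set to a sub-body of constant-fraction volume on which the order type becomes genuinely constant. First, by John's theorem (\cref{thm:john}) every $F \in \mathcal F_i$ contains an ellipsoid $E_F = c_F + A_F(B^d)$ of volume at least $d^{-d}$, with $A_F$ positive-definite. Fix once and for all the vertices $u_1,\dots,u_{d+1}$ of a regular simplex inscribed in $B^d$, set $v_F^k \defeq c_F + A_F u_k$, and let $S_F \defeq \conv\{v_F^1,\dots,v_F^{d+1}\} \subseteq E_F \subseteq F$; then $\vol(S_F) \geq c_d d^{-d} =: w_0$. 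Perturbing the ellipsoids slightly inside their sets (and keeping their volume at least $\tfrac12 d^{-d}$, say), we may assume that across all $F$ in all families the landmark points $v_F^k$ are in general position.

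Next, for each function $\sigma \colon [m] \to [d+1]$ apply the (classical) same-type lemma to the $m$ point families $\bigl\{ v_F^{\sigma(i)} : F \in \mathcal F_i \bigr\}_{i \in [m]}$: it returns subfamilies of positive proportion in which every transversal $\bigl(v_{F_1}^{\sigma(1)},\dots,v_{F_m}^{\sigma(m)}\bigr)$ has one and the same order type. Iterating over the $(d+1)^m$ choices of $\sigma$, and passing to a subfamily at each step, we obtain $\mathcal F_1',\dots,\mathcal F_m'$ with $|\mathcal F_i'| \geq \delta_{d,m} |\mathcal F_i|$ — where $\delta_{d,m}$ is the $(d+1)^m$-th power of the same-type constant for $m$ families — such that the list of orientations of \emph{all} landmark transversals (one landmark per family, ranging over all $\sigma$), and hence the full combinatorial type $\tau$ of the tuple of simplices $(S_{F_1},\dots,S_{F_m})$, is the same for every transversal $(F_1,\dots,F_m) \in \mathcal F_1' \times \cdots \times \mathcal F_m'$.

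Then comes the shrinking step. Because the determinant defining the order type is affine in each argument, the order type of a tuple of convex sets is read off at their extreme points; in particular, if $G_i \subseteq S_{F_i}$ is the sub-simplex whose barycentric coordinates lie in a fixed region $R_i \subseteq \Delta^d$, whether $(G_1,\dots,G_m)$ has a genuine — everywhere non-degenerate and constant — order type is governed by $\tau$ together with the choice of the $R_i$. For each of the finitely many combinatorial types $\tau$ one exhibits a ``recipe'': regions $R_i(\tau)$ of measure at least $\rho_\tau$ (for instance suitable corner neighborhoods of vertices of $\Delta^d$, chosen according to $\tau$) that force a genuine order type for every tuple realizing $\tau$. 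Taking $G_F$ to be the sub-body of $S_F$ cut out by $R_i(\tau)$ and $\mathcal G_i \defeq \{G_F : F \in \mathcal F_i'\}$ then finishes the proof, with $v_{d,m} \defeq (\min_\tau \rho_\tau)\, w_0 > 0$; the four bulleted conclusions hold by construction.

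The main obstacle is this last step — specifically, arranging that the shrinking factor $\rho_\tau$ depends only on the combinatorial type $\tau$ and not on the particular simplices realizing it. There is no a priori control on the eccentricities of the $E_F$ or on the spread of the landmark configuration, so the naive argument (replace each landmark by a tiny ball and invoke continuity of the determinant) yields only a factor depending on the configuration. What must be shown is that once the signs of all landmark transversals are fixed, the resulting system of linear inequalities is already strong enough — for a suitable corner recipe — to sign-determine the relevant determinants over the shrunken sets uniformly across all realizations of $\tau$; this can be organized either by a compactness argument on the scale-normalized realization space of each type or by an explicit corner construction generalizing the ``opposite-ends'' trick that handles the one-dimensional case, and it is where the bulk of the work lies.
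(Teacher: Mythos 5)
Your proposal has a genuine gap, and you have correctly located it yourself: the final ``shrinking'' step is not a technical loose end but the entire content of the theorem, and as set up it does not go through. Fixing the signs of all landmark transversals tells you nothing about the \emph{magnitudes} of the corresponding determinants, and the shrinking factor needed to make the orientation constant over sub-bodies of the simplices $S_{F_i}$ is governed by the ratio of those magnitudes to the diameters of the simplices. Since John ellipsoids of volume-$1$ sets can be arbitrarily eccentric and a landmark configuration can be arbitrarily close to degenerate while keeping a fixed sign pattern, the realization space of a combinatorial type $\tau$ (even scale-normalized) is not compact and the infimum of the admissible $\rho_\tau$ over it can be $0$. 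So neither the compactness argument nor a naive corner recipe yields a $\rho_\tau$ depending only on $\tau$, and no candidate replacement is exhibited. A secondary point: applying the classical same-type lemma separately for each $\sigma$ only makes the orientation constant \emph{within} each landmark class; distinct classes may disagree, which is precisely the situation your shrinking step must repair, so the gap cannot be waved away as a degenerate case.

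The paper avoids landmarks entirely and instead manufactures the hypothesis of \cref{thm:hyperplane-for-same-type} (Matou\v{s}ek's separation criterion): for each nonempty proper subset $I$ of the index set, it uses the ham sandwich theorem (\cref{thm:ham-sandwich}) applied to the measures $\mu_i(A) = \sum_{G} \lambda(A \cap G)$ to find a hyperplane simultaneously halving $d$ of the families; a counting argument shows that at least a quarter of the sets in each family retain at least a third of their volume on the appropriate side, and those truncated sets are kept. After $2^d-1$ rounds the convex hulls of the surviving families are pairwise separated for every partition, which forces a genuine order type for all transversals, with the explicit constants $v_{d,d+1} \geq 3^{-2^d+1}$ and $\delta_{d,d+1} \geq 4^{-2^d+1}$. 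If you want to salvage your approach, you would need to prove a uniform sign-determination lemma of the kind you describe; the separation route is the standard way to sidestep exactly that difficulty.
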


To prove \cref{thm:same-type-volume}, we will need the following lemma, used also to prove the same-type lemma for points, which gives a condition for $d+1$ convex sets to have an order type.

\begin{lemma}[Matou\v{s}ek \cite{matousek-lectures}, Lemma 9.3.2]\label{thm:hyperplane-for-same-type}
    If $C_1, \dots, C_{d+1}$ are convex sets in $\R^d$ such that $\bigcup_{i \in I} C_i$ and $\bigcup_{j \notin I} C_j$ can be separated by a hyperplane for every $\emptyset \subsetneq I \subsetneq [d+1]$, then $(C_1,\dots,C_{d+1})$ has an order type.
\end{lemma}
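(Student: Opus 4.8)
The plan is to reduce the statement to the sign of a single determinant and then invoke Radon's theorem. For a list of $d+1$ points $(x_1,\dots,x_{d+1})$ in $\R^d$, the only $(d+1)$-element sub-list is the whole list, so its order type is exactly the sign of $D(x_1,\dots,x_{d+1}) \defeq \det(x_1-x_{d+1},\dots,x_d-x_{d+1})$. Thus ``$(C_1,\dots,C_{d+1})$ has an order type'' means precisely ``$D$ has constant sign on $C_1\times\cdots\times C_{d+1}$,'' and that is the statement I would prove.

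First I would observe that $D$ is a polynomial in the coordinates of its arguments, hence continuous, and that $C_1\times\cdots\times C_{d+1}$ is convex, hence connected. The loci $\{D>0\}$ and $\{D<0\}$ are open, so they cannot both be nonempty while covering the connected product; consequently, if $D$ is not of constant sign, then $D$ vanishes somewhere, i.e.\ there is a list $(x_1,\dots,x_{d+1})\in C_1\times\cdots\times C_{d+1}$ with $x_1,\dots,x_{d+1}$ affinely dependent. The rest of the argument shows that, under the separation hypothesis, no such list can exist.

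Suppose $x_i\in C_i$ with $x_1,\dots,x_{d+1}$ affinely dependent. Then these points span an affine flat of dimension at most $d-1$, so they lie on a common hyperplane $h$, which we identify with $\R^{d-1}$. Since $d+1 = (d-1)+2$ is precisely the Radon number of $\R^{d-1}$, Radon's theorem applied inside $h$ produces a partition $[d+1] = I \sqcup ([d+1]\setminus I)$ into two nonempty parts together with a point $p \in \conv(\{x_i : i \in I\}) \cap \conv(\{x_j : j \notin I\})$. Because $\emptyset \subsetneq I \subsetneq [d+1]$, the hypothesis supplies a hyperplane $g$ separating $\bigcup_{i\in I} C_i$ from $\bigcup_{j\notin I} C_j$; here I read ``separated by a hyperplane'' in the strict sense that each of the two unions lies in one of the open halfspaces bounded by $g$ (this is the interpretation under which the lemma is correct --- weak separation already fails when $d=1$, e.g.\ for $C_1=[0,1]$, $C_2=[1,2]$). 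Then $\{x_i : i \in I\}$, hence its convex hull, hence $p$, lies strictly on one side of $g$, while symmetrically $p$ lies strictly on the other side --- a contradiction. Therefore $D$ has constant sign, and $(C_1,\dots,C_{d+1})$ has an order type.

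The argument is short, and I expect the only genuine point of care to be the reading of ``separated by a hyperplane'': it must be strict (the separating hyperplane avoids both unions) for the conclusion to hold. Beyond that, the remaining details are bookkeeping --- verifying that the Radon partition splits $[d+1]$ into two nonempty blocks so that the separation hypothesis applies on the nose, and that the Radon number in the hyperplane $h$ is exactly $d+1$, the number of points at hand.
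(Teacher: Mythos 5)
The paper does not prove this lemma at all---it is quoted directly from Matou\v{s}ek---and your argument (the determinant $D$ must have constant sign on the connected product $C_1\times\cdots\times C_{d+1}$, since otherwise it vanishes at some transversal, whose Radon partition inside the hyperplane spanned by the degenerate points contradicts the separation hypothesis) is correct and is essentially the standard proof given in that cited source. Your caveat that ``separated by a hyperplane'' must be read strictly enough to keep $\conv\big(\bigcup_{i\in I} C_i\big)$ and $\conv\big(\bigcup_{j\notin I} C_j\big)$ disjoint is also well taken, as your $d=1$ example shows the weak reading fails.
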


\begin{proof}[Proof of \cref{thm:same-type-volume}]
    We will prove the result for $m=d+1$. Since the order type of an $m$-tuple with $m > d+1$ is a collection of order types of $(d+1)$-tuples, we can apply this case iteratively to each $(d+1)$-tuple of $\mathcal F_1,\dots, \mathcal F_m$; after $\binom{m}{d+1}$ applications, the resulting collections satisfy the conclusion of the lemma with $\delta_{d,m} \geq (\delta_{d,d+1})^{\binom{m}{d+1}}$ and $v_{d,m} \geq (v_{d,d+1})^{\binom{m}{d+1}}$.

    We will form $\mathcal G_i$ by iteratively refining $\mathcal F_i$. To get in the situation where each $(G_1,\dots,G_{d+1}) \in \prod_{i=1}^{d+1} \mathcal G_i$ has the same order type, we will construct $\mathcal G_i$ so that the sets $\conv(\mathcal G_1),\dots,\conv(\mathcal G_{d+1})$ satisfy the conditions of \cref{thm:hyperplane-for-same-type}. Since $I$ and $[d+1]\setminus I$ give the same partition of $[d+1]$, we need only consider half the nonempty proper subsets of $[d+1]$. We will take the sets that do not contain $d+1$. So, to start, we enumerate the subsets $\emptyset \subsetneq I \subsetneq [d]$ as $I_1,\dots,I_{2^{d}-1}$ and set $\mathcal G_i^0 = \mathcal F_i$. In the end, we will take $\mathcal G_i = \mathcal G_i^{2^d-1}$. 
    
    Here is how we construct $\mathcal G_i^{k+1}$ from $\mathcal G_i^k$. By induction, we will assume that each set in $\mathcal G_i^k$ has the same volume $\rho_{d,k} > 0$. Each $\mathcal G_i^k$ corresponds to a measure $\mu_i^k(A) = \sum_{G \in \mathcal G_i^k} \lambda (A \cap G)$, where $\lambda$ is the Lebesgue measure on $\R^d$. By the ham sandwich theorem (\cref{thm:ham-sandwich}), there is a hyperplane $H_k$ that simultaneously halves the $d$ measures $\mu_i^k$ for $1 \leq i \leq d$. Let $H^+_k$ and $H^-_k$ denote the two half-spaces determined by $H_k$, and define
    \[
        \mathcal G_i^{k\mathcal +} =
        \big\{G \cap H^+_k : \vol(G \cap H^+_k) \geq \frac{1}{3} \rho_{d,k} \big\}
    \]
    and similarly $\mathcal G_i^{k-}$. The collection $\mathcal G_i^{k+}$ contains at least $\frac{1}{4} |\mathcal G_i^k|$ sets, since otherwise
    \begin{equation}\label{eq:halving-plane}
        \frac{\vol(\mathcal G_i^k \cap H^+_k)}{\vol(\mathcal G_i^k)}
        = \frac{1}{|\mathcal G_i^k|} \sum_{G \in G_i^k} \frac{1}{\rho_{d,k}} \vol(G \cap H_k^+)
        < \frac{1}{4} \cdot 1 + \frac{3}{4} \cdot \frac{1}{3}
        = \frac{1}{2},
    \end{equation}
    implying that $H_k$ is not a halving hyperplane. Similarly, each $\mathcal G_i^{k-}$ contains at least $\frac{1}{4}|\mathcal G_i^k|$ sets.

    We define $\mathcal G_{d+1}^{k+}$ and $\mathcal G_{d+1}^{k-}$ similarly. Since $H_k$ does not halve $\mathcal G_{d+1}^k$, it is possible that one of these collections is small. However, the argument in \eqref{eq:halving-plane} shows that at least one of these two collections is large: if $\vol(\mathcal G_{d+1}^k \cap H^+_k) \geq 1/2$, then $|\mathcal G_{d+1}^{k+}| \geq \frac{1}{4} |\mathcal G_{d+1}^k|$; and if $\vol(\mathcal G_{d+1} \cap H^-) \geq 1/2$, then $|\mathcal G_{d+1}^{k-}| \geq \frac{1}{4} |\mathcal G_{d+1}^k|$.

    We now use $I_{k+1}$ to define $\mathcal G_i^{k+1}$:
    \begin{itemize}
        \item If $|\mathcal G_{d+1}^{k+}| \geq \frac{1}{4} |\mathcal G_{d+1}^k|$, define ${\tilde{\mathcal G}_i^{k+1}} = \mathcal G_i^{k+}$ for every $i \notin I_{k+1}$ and ${\tilde{\mathcal G}_i^{k+1}} = \mathcal G_i^{k-}$ for every $i \in I_{k+1}$.
        \item If $|\mathcal G_{d+1}^{k-}| \geq \frac{1}{4} |\mathcal G_{d+1}^k|$, define ${\tilde{\mathcal G}_i^{k+1}} = \mathcal G_i^{k-}$ for every $i \notin I_{k+1}$ and ${\tilde{\mathcal G}_i^{k+1}} = \mathcal G_i^{k+}$ for every $i \in I_{k+1}$.
    \end{itemize}
    To obtain $\mathcal G_i^{k+1}$, we replace each set $G \in \tilde{\mathcal G}_i^{k+1}$ with a convex set of volume exactly $\rho_{d,k+1} = \frac{1}{3}\rho_{d,k}$ contained inside it.

    Each set in $\mathcal G_i^{k+1}$ has the same volume $\rho_{d,k+1}$, and $|\mathcal G_i^{k+1}| \geq \frac{1}{4}|\mathcal G_i^k|$. Moreover, $\bigcup_{i \in I_{k+1}} \mathcal \conv(\mathcal G_i^{k+1})$ and $\bigcup_{j \notin I_{k+1}} \conv(\mathcal G_i^{k+1})$ are separated by the hyperplane $H_k$. Therefore the sets $\mathcal G_i \defeq \mathcal G_i^{2^d-1}$ satisfy the hypotheses of \cref{thm:hyperplane-for-same-type}, implying that the sets $\conv(\mathcal G_1), \dots, \conv(\mathcal G_{d+1})$ have an order type. As a consequence, every selection $(G_1,\dots, G_{d+1}) \in \prod_{i=1}^{d+1} \mathcal G_i$ has the same type.
\end{proof}

This proof gives $v_{d,d+1} \geq 3^{-2^d + 1}$ and $\delta_{d,d+1} \geq 4^{-2^d+1}$. These values aren't rigid: We may exchange a larger value of $v_{d,d+1}$ for a smaller value of $\delta_{d,d+1}$, and vice versa. If we redefine $\mathcal G_i^{k+}$ as 
\[
    \mathcal G_i^{k+} =
    \big\{G \cap H^+_k : \vol(G \cap H^+_k) \geq \alpha\, \rho_{d,k} \big\},
\]
then the same proof gives $v_{d,d+1} \geq \alpha^{2^d-1}$ and $\delta_{d,d+1} \geq (1-\frac{1}{2(1-\alpha)})^{2^d-1}$. In other words, if one of $v_{d,d+1}$ or $\delta_{d,d+1}$ is extraordinarily small, then the other may be nearly as large as $2^{-2^d + 1}$ (which is, admittedly, not so very large).

\subsection{Proof of the homogeneous selection theorem with volume}\label{sec:hom-sel_proof}

At last, we turn to the homogeneous selection theorem. We will prove a colorful version of the theorem, which is stronger than \cref{thm:vol-hom-sel}, and deduce \cref{thm:vol-hom-sel} from it.

\begin{theorem}[Colorful homogeneous selection with volume]\label{thm:colorful-hom-vol-sel}
    Suppose that $\mathcal F_1,\dots,\mathcal F_{\frac{1}{2} d(d+3)}$ are collections of volume-1 convex sets in $\R^d$ and $|\mathcal F_i| = n$ for every $i \in [d+1]$. If $n$ is sufficiently large, then there are
    \begin{itemize}
        \item $d+1$ of these families $\mathcal F_{i_1},\dots,\mathcal F_{i_{d+1}}$,
        \item a set $E$ with volume at least $v_d > 0$, and 
        \item collections $\widetilde{\mathcal F}_j \subseteq \mathcal F_{i_j}$
    \end{itemize}
    such that $|\widetilde{\mathcal F}_j|\geq c_d |\mathcal F_{i_j}|$ and $E \in \conv(\widetilde{F}_1,\dots,\widetilde{F}_{d+1})$ for any selection of sets $\widetilde{F}_i \in \widetilde{\mathcal F}_i$.
\end{theorem}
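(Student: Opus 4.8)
Following Pach's strategy for the homogeneous point selection theorem \cite{homogeneous-selection}, I would proceed in three phases: rigidify the configuration with the volumetric same-type lemma, locate one fixed deep ellipsoid using the colorful Tverberg theorem with volume for smaller transversals together with a counting argument and volumetric fractional Helly, and finally convert the resulting density statement into a complete multipartite structure. \emph{Phase 1 (rigidify).} First apply \cref{thm:john} to replace each set of each $\mathcal F_i$ by the largest ellipsoid inside it — costing a factor $d^{-d}$ of volume but making every member of every family an ellipsoid — and then apply the volumetric same-type lemma \cref{thm:same-type-volume} with $m=\frac{1}{2}d(d+3)$ to get subfamilies $\mathcal G_1,\dots,\mathcal G_{d(d+3)/2}$ with $|\mathcal G_i|\ge\delta_d|\mathcal F_i|$, every member an ellipsoid of volume at least $v_d$, and every transversal of $\mathcal G_1\times\dots\times\mathcal G_{d(d+3)/2}$ carrying one and the same order type. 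Intersecting each ellipsoid with a fixed-volume sub-ellipsoid and rescaling $\R^d$, I may assume every member of every $\mathcal G_i$ is an ellipsoid of volume exactly $1$; this preserves the common order type because every selection from the shrunken sets is still a selection from the $\mathcal G_i$. What this buys us is that the combinatorial type of a subconfiguration depends only on which families its sets come from — in particular the common order type is nondegenerate — and that, all objects now being ellipsoids parametrized by points of a fixed Euclidean space, the relation ``$E\subseteq\conv(H_1\cup\dots\cup H_{d+1})$'' on $(d+1)$-tuples of members is semialgebraic of complexity bounded in terms of $d$.

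\emph{Phase 2 (one deep ellipsoid).} Fix a constant $r=r_d\ge\frac{1}{2}d(d+3)$ and put $K\defeq 2\binom{d(d+3)/2}{d+1}(r-1)$. For each way of picking a $K$-element subcollection out of each $\mathcal G_i$, \cref{thm:col-tv-vol-smaller} produces $d+1$ of the families together with $r$ pairwise disjoint transversals of them whose convex hulls share an ellipsoid of volume at least $v_d$. Since $n$ is large, by pigeonhole some fixed $(d+1)$-subset $\{i_1,\dots,i_{d+1}\}$ is selected for a positive fraction of these choices; a counting argument of the same type as in the proof of \cref{thm:selection-2d} (a fixed $\frac{1}{2}d(d+3)$-tuple of disjoint transversals of $\mathcal G_{i_1},\dots,\mathcal G_{i_{d+1}}$ is realized by only a bounded proportion of the $K$-subcollections) then shows that a positive fraction $\alpha_d$ of all $\frac{1}{2}d(d+3)$-element subsets of the family $\mathcal D\defeq\{\conv(H_1\cup\dots\cup H_{d+1}):(H_1,\dots,H_{d+1})\ \text{a transversal of}\ \mathcal G_{i_1},\dots,\mathcal G_{i_{d+1}}\}$ have a common ellipsoid of volume $\ge v_d$. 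Rescaling so this threshold is $1$ and applying the volumetric fractional Helly theorem \cref{thm:vol-frac-helly} yields a subfamily $\mathcal D'\subseteq\mathcal D$ with $|\mathcal D'|\ge\beta_d|\mathcal D|$ whose intersection contains one fixed ellipsoid $E$, of volume $\ge v_d$.

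\emph{Phase 3 (complete multipartite structure).} Enlarge $\mathcal D'$ to $\mathcal D''\defeq\{(H_1,\dots,H_{d+1})\in\mathcal G_{i_1}\times\dots\times\mathcal G_{i_{d+1}}:E\subseteq\conv(H_1\cup\dots\cup H_{d+1})\}$; this still has density $\ge\beta_d$ among all transversals and, by Phase 1, is the edge set of a $(d+1)$-partite $(d+1)$-uniform hypergraph given by a semialgebraic relation of complexity bounded in terms of $d$. The structural hypergraph lemma stated in the appendix — a dense semialgebraic hypergraph of bounded complexity contains a complete multipartite subhypergraph with each part a positive fraction of the corresponding class — then furnishes subfamilies $\widetilde{\mathcal F}_j\subseteq\mathcal G_{i_j}$ with $|\widetilde{\mathcal F}_j|\ge c_d|\mathcal G_{i_j}|\ge c_d\delta_d n$ such that $\widetilde{\mathcal F}_1\times\dots\times\widetilde{\mathcal F}_{d+1}\subseteq\mathcal D''$. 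That is precisely $E\subseteq\conv(\widetilde F_1\cup\dots\cup\widetilde F_{d+1})$ for every selection $\widetilde F_j\in\widetilde{\mathcal F}_j$; setting $v_d\defeq\vol(E)$ and renaming indices gives \cref{thm:colorful-hom-vol-sel}, with all constants depending only on $d$.

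The main obstacle is Phases 2--3. \cref{thm:col-tv-vol-smaller} controls only boundedly many disjoint transversals, so extracting from it a hypergraph of \emph{positive density} forces one through the counting bookkeeping of Phase 2, and then converting that density into \emph{linear}-size subfamilies on which \emph{every} transversal is good requires the ``deep containment'' relation to be semialgebraic of bounded complexity (which is exactly why Phase 1 passes to ellipsoids and rigidifies the order type), together with the use of fractional Helly to replace an ellipsoid varying with the transversal by a single fixed $E$. One should also double-check that the nondegeneracy secured in Phase 1 rules out the order type being identically $0$, which would force the whole configuration into a hyperplane and make the sought volume bound vacuous.
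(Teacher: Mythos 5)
Your overall architecture (colorful Tverberg for reduced transversals $\to$ counting $\to$ fractional Helly $\to$ hypergraph structure $\to$ same-type rigidity) matches the paper's, and Phases 1--2 are essentially sound. The genuine gap is in Phase 3. The hypergraph lemma actually stated in the appendix is Pach's \emph{weak regularity lemma} (\cref{thm:hypergraph-regularity}), which only guarantees subsets $Y_i$ such that every sufficiently dense choice of $Z_i\subseteq Y_i$ contains \emph{at least one} edge of the dense hypergraph; it does \emph{not} assert that a dense hypergraph contains a complete multipartite subhypergraph with linear-size parts. The statement you invoke is a much stronger density theorem for semialgebraic hypergraphs of bounded complexity (Fox--Pach--Suk), which is not among the paper's tools, and importing it would be a substantial additional ingredient that you would need to state and justify (including verifying bounded description complexity of the relation $E\subseteq\conv(H_1\cup\dots\cup H_{d+1})$ in the chosen parametrization).

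Your attempt to compensate by applying the same-type lemma in Phase 1 does not close this gap, because the order type you rigidify there is that of the families $\mathcal G_1,\dots,\mathcal G_{d(d+3)/2}$ \emph{among themselves}. To upgrade ``one transversal's convex hull contains $E$'' to ``every transversal's convex hull contains $E$,'' what you need is a common order type of the $(d+2)$-tuples $(\{E\},\mathcal G_{i_1},\dots,\mathcal G_{i_{d+1}})$ --- and $E$ does not exist until after the fractional Helly step in Phase 2, so it cannot be among the families rigidified in Phase 1. This is exactly why the paper applies the volumetric same-type lemma \emph{last}, to $(\mathcal F_1',\dots,\mathcal F_{d+1}',\{E\})$ with $E$ adjoined as a singleton family: the weak regularity lemma then supplies one edge of $\set{H}$ inside the refined product, and the common order type (now including $E$) propagates the containment to all transversals. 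The fix is to reorder your argument accordingly: run regularity on the edge set $\set{H}$ produced by fractional Helly, and only then apply \cref{thm:same-type-volume} to the $d+2$ families including $\{E\}$.
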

\begin{proof}
    Having prepared our mathematical \textit{mise en place}, the proof will follow a similar outline to Pach's homogeneous point selection theorem \cite{selection-thm-original}. One key difference between this proof and Pach's, besides the new volumetric results, is that we must refine the $\frac{1}{2}d(d+3)$ families to a selection of $d+1$ of them. In the first step of the proof, we choose these $d+1$ families $\mathcal F_{i_1},\dots, \mathcal F_{i_{d+1}}$ so that a positive fraction of their transversals intersect with large volume. After that, we use the weak hypergraph regularity lemma (\cref{thm:hypergraph-regularity}) and the same-type lemma to select the sets $\widetilde{\mathcal F}_i$.
    
    Let's start. We will use $K^{\ell}(t)$ to mean the complete $\ell$-partite, $\ell$-uniform hypergraph with $t$ vertices in each part.\footnote{In other words, there are $\ell$ vertex sets $V_1,\dots, V_\ell$, each with $t$ vertices, and the edges of $K^{\ell}(t)$ are all those sets that have exactly one vertex in each of the $V_i$.} In our case, we will take $\ell = \frac{1}{2}d(d+3)$ and $t = 2\binom{\frac{1}{2} d(d+3)}{d+1}(r-1)$. Consider the complete $\ell$-partite hypergraph whose edges are tuples $(F_1,\dots,F_\ell) \in \prod_{i=1}^{\ell} \mathcal F_i$, which contains $\binom{n}{t}^\ell \geq \gamma_d n^{\ell t}$ copies of $K^{\ell}(t)$. Each instance of $K^{\ell}(t)$ corresponds to $\ell$ subfamilies $\mathcal K_{1} \subseteq \mathcal F_{i_1}, \dots, \mathcal K_{\ell} \subseteq \mathcal F_{i_{\ell}}$, each containing exactly $t$ sets. The reduced volumetric colorful Tverberg theorem (\cref{thm:col-tv-vol-smaller}) implies that among these, there are $d+1$ families $\mathcal K_{j_1}, \dots, \mathcal K_{j_{d+1}}$ and $d+1$ disjoint transversals $\mathcal T_1,\dots,\mathcal T_{d+1}$ of these families, such that $\bigcap_{i=1}^{d+1} \conv(\mathcal T_i)$ has volume at least $v_d' \defeq r^d d^{-d^2 ( 1 + o_d(1))}$.

    Each tuple of transversals $(\mathcal T_1,\dots,\mathcal T_{d+1})$ is contained in at most $n^{\ell t - (d+1)^2}$ different appearances of $K^{\ell}(t)$ in $\prod_{i=1}^{d+1} \mathcal F_i$, so in total, we have at least
    \[
        \frac{\gamma_d n^{\ell t}}{n^{\ell t - (d+1)^2}}
        = \gamma_d\, n^{(d+1)^2}
    \]
    \emph{distinct} disjoint transversals such that $\bigcap_{i=1}^{d+1} \conv(\mathcal T_i)$ has volume at least $v_d'$. By the pigeonhole principle, there are $d+1$ families $\mathcal F_{i_1}, \dots, \mathcal F_{i_{d+1}}$ that together contain at least $\gamma_d \binom{\ell}{d+1}^{-1} n^{(d+1)^2}$ of these transversals. For ease of notation, we will reorder the $\mathcal F_i$ so that $(\mathcal F_{i_1},\dots,\mathcal F_{i_{d+1}}) = (\mathcal F_1,\dots,\mathcal F_{d+1})$. We will only need these $d+1$ families from this point on.
    
    Let
    \[
        \mathcal C \defeq \big\{\!\conv(C_1, \dots, C_{d+1}) : C_i \in \mathcal F_{i} \big\}.
    \]
    Each tuple of transversals we found above corresponds to a different collection of $d+1$ sets in $\mathcal C$ whose intersection has volume at least $v_d'$. Since $|\mathcal C| = n^{d+1}$, we know that $\mathcal C$ contains at least $\gamma_d \binom{\ell}{d+1}^{-1} n^{(d+1)^2} \geq \gamma'_d \binom{|\mathcal C|}{d+1}$ such intersecting $(d+1)$-tuples. The volumetric fractional Helly theorem (\cref{thm:frac-helly-vol-d+1}) then tells us that there is a set $\set{H} \subseteq \mathcal C$ of size $\lvert \set{H} \rvert \geq \beta_{d}\, n^{d+1}$ whose intersection has volume at least $v_d'' > 0$. We will call this intersection $E$.
    
    Now for the second step. Because each set in $\mathcal C$ comes from the convex hull of a transversal of $\mathcal F_1, \dots, \mathcal F_{d+1}$, we may also consider $\set{H}$ to be a subhypergraph of $\prod_{i=1}^{d+1} \mathcal F_i$ with at least $\beta_d\, n^{d+1}$ edges. If $n$ is sufficiently large, then the hypergraph regularity lemma (\cref{thm:hypergraph-regularity}) guarantees subsets $\mathcal F_i' \subseteq \mathcal F_i$ such that
    \begin{itemize}
        \item $|\mathcal F_i'| \geq \alpha_d |\mathcal F_i|$ and
        \item whenever $\mathcal C_i \subseteq \mathcal F_i'$ with $|\mathcal C_i| \geq \delta_{d,d+2} |\mathcal F_i'|$ for each $i \in [d+1]$, at least one edge of $\set{H}$ lies in $\prod_{i=1}^{d+1} \mathcal C_i$.
    \end{itemize}
    Here, $\delta_{d,d+2}$ is the constant that appears in the volumetric same-type theorem (\cref{thm:same-type-volume}).
    
    Now apply \cref{thm:same-type-volume} to the $d+2$ families $(\mathcal F_1',\dots,\mathcal F_{d+1}',\{E\})$ to get subfamilies $(\mathcal G_1,\dots,\mathcal G_{d+1},\{\widetilde E\})$ with same-type transversals. Every selection $(G_1,\dots,G_{d+1}) \in \prod_{i=1}^{d+1} \mathcal G_i$ corresponds to a unique tuple $(F'_1,\dots, F'_{d+1}) \in \prod_{i=1}^{d+1} \mathcal F'_i$ such that $G_i \subseteq F'_i$. Because $|\mathcal G_i| \geq \delta_{d,d+2} |\mathcal F_i'|$, there is one selection $(G^*_1,\dots,G^*_{d+1})$ such that the corresponding tuple $(F^*_1,\dots,F^*_{d+1})$ lies in $\set{H}$. That implies that $E \subseteq \conv(F^*_1,\dots,F^*_{d+1})$. Since the order type is maintained by taking subsets, we conclude that $\widetilde E \subseteq \conv(G^*_1,\dots,G^*_{d+1})$.
    
    But! All transversals in $(\mathcal G_1,\dots,\mathcal G_{d+1},\{\widetilde E\})$ have the same order type, so $\widetilde E \subseteq \conv(G_1,\dots,G_{d+1})$ for \emph{every} $(G_1,\dots,G_{d+1}) \in \prod_{i=1}^{d+1} \mathcal G_i$. And now we simply set
    \[
        \widetilde{\mathcal F}_i
        = \big\{ F \in \mathcal F'_i : F \supseteq G \text{ for some } G \in \mathcal G_i \big\}.
    \]
    Then $\widetilde E \subseteq \conv(\widetilde F_1, \dots, \widetilde F_{d+1})$ for any choice of $\widetilde F_i \in \widetilde{\mathcal F}_i$.
    So the theorem holds with $v_d = \vol(\widetilde E)$ and $c_d = \delta_{d,d+2}$.
\end{proof}

\begin{proof}[Proof of \cref{thm:vol-hom-sel}]
    Let $\mathcal F_1,\dots,\mathcal F_{\frac{1}{2}d(d+3)}$ be disjoint subfamilies of $\mathcal F$, each with exactly $\big\lfloor 2|\mathcal F|/d(d+3) \big\rfloor$ sets, and apply \cref{thm:colorful-hom-vol-sel}.
\end{proof}

\section{Further quantitative selection theorems}\label{sec:quant-extensions}

This section illustrates the flexibility of this paper's proofs by extending the results to diameter and surface area. \cref{sec:selection-theorem-extensions} addresses selection and weak $\epsilon$-net theorems and \cref{sec:hom-sel-extensions} addresses the homogeneous sleection theorem.

\subsection{Selection theorems}\label{sec:selection-theorem-extensions}

The proof of \cref{thm:selection-d+1} is quite modular. It has four main ingredients:
\begin{itemize}
    \item the quantitative fractional Helly theorem (\cref{thm:vol-frac-helly}),
    \item the quantitative Tverberg theorem (\cref{thm:ellipse-tverberg}),
    \item a quantitative finite-approximation theorem (\cref{thm:quant-steinitz}),\footnote{A theorem of the form: If $\conv(X)$ has volume 1, then there are at most $k_d$ points in $X$ whose convex hull has volume at least $v_d > 0$.} and
    \item the fact that $\sum_{i} \vol(C_i) \geq \vol\big(\bigcup_i C_i)$ (used in \cref{thm:vol-planes}).
\end{itemize}
To prove a version of \cref{thm:selection-d+1} for different quantitative functions, we only need to prove the analogues of these four statements and then copy the proof of \cref{thm:selection-d+1}, replacing statements for volumes with the corresponding statements for the other quantitative function. In this way, we can obtain a variety of other quantitative selection theorems. Moreover, once we have a selection theorem, we get a weak $\epsilon$-net theorem essentially for free, using the same greedy algorithm that proves \cref{thm:vol-eps-nets}.

Take, for example, diameter as the new quantitative function. In this case, all the necessary ingredients are ripe for the plucking from the fertile field of mathematical literature: the fractional Helly theorem for diameter appears in \cite{melange-diameter, soberon-diameter-helly}; the diameter Tverberg theorem in \cite{pablo-concave-funcs}; and the finite-approximation theorem in \cite{discrete-quant-tverberg} (Lemma 3.1, which implies that any segment contained in $\conv(X)$ is contained in the convex hull of at most $2d$ points from $X$). Sliding these results into the framework of \cref{thm:selection-d+1}'s proof, we obtain:

\begin{theorem}[Selection theorem for diameter]\label{thm:selection-d+1-diam}
    For any finite family $\mathcal F$ of convex sets in $\R^d$ with diameter 1, there is a set $E$ with diameter at least $4^{-d^2 (1 + o_d(1) )}$ that is contained in $\conv(\mathcal G)$ for at least $c_d \binom{|\mathcal F|}{d+1}$ choices of $\mathcal G \in \binom{\mathcal F}{d+1}$.
\end{theorem}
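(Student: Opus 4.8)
The plan is to run the proof of \cref{thm:selection-d+1} essentially verbatim, replacing ``volume'' by ``diameter'' throughout and substituting each of its four structural ingredients by the corresponding statement for diameter: the fractional Helly theorem for diameter of \cite{melange-diameter, soberon-diameter-helly} in place of the quantitative fractional Helly theorem \cref{thm:vol-frac-helly}; the Tverberg theorem for diameter of \cite{pablo-concave-funcs} in place of the quantitative Tverberg theorem \cref{thm:ellipse-tverberg}; Lemma~3.1 of \cite{discrete-quant-tverberg}, which implies that any segment contained in $\conv(X)$ lies in the convex hull of at most $2d$ points of $X$, in place of the quantitative Steinitz theorem \cref{thm:quant-steinitz}; and the additivity of length along a line segment in place of the inequality $\sum_i\vol(C_i)\ge\vol\big(\bigcup_i C_i\big)$. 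No diameter analogue of John's theorem is needed, since a convex set of diameter $1$ trivially contains a segment of diameter $1$ and the diameter Tverberg theorem is stated for convex sets directly. Granting these substitutions, the bookkeeping of \cref{thm:selection-2d}'s proof---partition a bounded number of sets with the Tverberg theorem, trim each part with the finite-approximation theorem, count the resulting small tuples, and finish with fractional Helly---together with the one-paragraph reduction of \cref{thm:selection-d+1} to it, carries over, producing a set $E$ of diameter at least $4^{-d^2(1+o_d(1))}$ contained in $\conv(\mathcal G)$ for at least $c_d\binom{|\mathcal F|}{d+1}$ choices of $\mathcal G\in\binom{\mathcal F}{d+1}$.

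The only ingredient not available off the shelf is the diameter analogue of \cref{thm:vol-planes}: \emph{for finite sets $X_1,\dots,X_m\subseteq\R^d$, each with at most $k$ points, if $\diam\big(\bigcap_{i=1}^m\conv(X_i)\big)\ge 1$ then there are $Y_i\subseteq X_i$ with $|Y_i|\le d+1$ and $\diam\big(\bigcap_{i=1}^m\conv(Y_i)\big)\ge\big(\mathrm{poly}_d(m,k)\big)^{-1}$.} I would prove this by copying the argument for \cref{thm:vol-planes}. Let $\mathcal H$ be the at most $m\binom kd$ hyperplanes spanned by $d$-element subsets of the $X_i$, and pick a segment $s$ of length at least $1$ inside $\bigcap_i\conv(X_i)$. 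Cutting $s$ by the hyperplanes of $\mathcal H$---here is where additivity of length replaces additivity of volume---produces a sub-segment $s'$ of length at least $\big(\mathrm{poly}_d(m,k)\big)^{-1}$ whose relative interior meets none of them; and since $s'$ is then contained in a single simplex on at most $d+1$ vertices of each $X_j$ (the faces of any triangulation of $\conv(X_j)$ with vertices in $X_j$ cannot change along the connected set $\mathrm{relint}(s')$ without crossing a hyperplane spanned by $d$ points of $X_j$), the sets $Y_j$ can be read off.

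The one genuine point of divergence from the volumetric argument, and the step I expect to demand the most care, is that the hypothesis of \cref{thm:vol-planes}, volume at least $1$, forces $\bigcap_i\conv(X_i)$ to be full-dimensional, whereas diameter at least $1$ does not. In particular, the segment $s$ may lie \emph{inside} some hyperplanes of $\mathcal H$, and then no sub-segment of it can avoid those hyperplanes. I would handle this by splitting $\mathcal H$ into the hyperplanes that contain $s$ and those that do not, working inside the flat $L$ cut out by the former (which contains $s$), and running the cutting argument only against the latter together with the finitely many lower-dimensional flats that arise as affine hulls of the proper faces of the chosen triangulations of the $\conv(X_j)$. There are only $\mathrm{poly}_d(k)$ such flats per $j$, so $s$ still retains a sub-segment of length $\big(\mathrm{poly}_d(m,k)\big)^{-1}$ whose relative interior avoids all the transition sets, and the triangulation face containing it is then constant. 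With this lemma in hand, the assembly is mechanical: the diameter Tverberg theorem partitions a bounded number of diameter-$1$ sets into $h_d$ parts (where $h_d$ is the diameter fractional-Helly number) whose convex hulls share a set of diameter $v_d$; a diameter-realizing segment of this set lies in each part's convex hull, so Lemma~3.1 of \cite{discrete-quant-tverberg} shrinks each part to at most $2d$ sets and the lemma above shrinks them to $d+1$ sets; counting the $(d+1)$-tuples exactly as in \cref{thm:selection-2d} and invoking the diameter fractional Helly theorem finishes the proof. Tracking the constants gives diameter at least $4^{-d^2(1+o_d(1))}$, the same shape as in \cref{thm:selection-d+1}; in fact the diameter version is cheaper, since the hyperplane count enters the trimming lemma with exponent $1$ rather than $d$, but the stated bound is already what we need.
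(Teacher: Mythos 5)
Your proposal follows the paper's proof exactly: the paper establishes \cref{thm:selection-d+1-diam} by substituting the diameter versions of the same four ingredients (fractional Helly, Tverberg, the finite-approximation lemma for segments, and the cutting argument underlying \cref{thm:vol-planes}) into the framework of \cref{thm:selection-d+1}, which is precisely your plan. Your extra care with the degenerate case of the \cref{thm:vol-planes} analogue---where the witnessing segment may lie inside hyperplanes spanned by points of the $X_j$, so one must cut against the (polynomially many) faces of a triangulation of each $\conv(X_j)$ rather than against full-dimensional hyperplane complements---addresses a genuine subtlety that the paper passes over in silence, and your fix is sound.
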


If we feed this selection theorem into the greedy algorithm from the proof of \cref{thm:vol-eps-nets}, we get a weak $\epsilon$-net theorem free of charge:

\begin{theorem}[Weak $\epsilon$-nets for diameter]
    For every finite family $\mathcal F$ of diameter-1 convex sets and every $\epsilon > 0$, there is a family $\mathcal S$ of $O_d(\epsilon^{-(d+1)})$ sets, each of diameter at least $\nu_d > 0$, such that $\conv(\mathcal G)$ contains a set in $\mathcal S$ whenever $\mathcal G$ contains at least $\epsilon |\mathcal F|$ sets in $\mathcal F$.
\end{theorem}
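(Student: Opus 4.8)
The plan is to feed the diameter selection theorem (\cref{thm:selection-d+1-diam}) into exactly the greedy construction used to prove \cref{thm:vol-eps-nets}; essentially nothing else changes. We may assume $\epsilon \le 1$. Start with $\mathcal S = \emptyset$. As long as some subfamily $\mathcal F' \subseteq \mathcal F$ with $|\mathcal F'| \ge \epsilon|\mathcal F|$ has the property that $\conv(\mathcal F')$ contains no member of $\mathcal S$, apply \cref{thm:selection-d+1-diam} to $\mathcal F'$ --- whose sets all have diameter $1$, being members of $\mathcal F$ --- to obtain a set $E$ of diameter at least $\nu_d \defeq 4^{-d^2(1+o_d(1))}$ such that $E \subseteq \conv(\mathcal G)$ for at least $c_d\binom{|\mathcal F'|}{d+1}$ choices of $\mathcal G \in \binom{\mathcal F'}{d+1}$; then add $E$ to $\mathcal S$ and repeat.

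To bound $|\mathcal S|$, note that each such $\mathcal G$ satisfies $\conv(\mathcal G) \subseteq \conv(\mathcal F')$, so it contained no member of $\mathcal S$ before the step and contains $E \in \mathcal S$ after it. Hence each iteration increases the number of $(d+1)$-tuples $\mathcal G \in \binom{\mathcal F}{d+1}$ whose convex hull contains a member of $\mathcal S$ by at least $c_d\binom{\epsilon|\mathcal F|}{d+1}$; since this number never exceeds $\binom{|\mathcal F|}{d+1}$, the process halts after at most $\binom{|\mathcal F|}{d+1}\big/\big(c_d\binom{\epsilon|\mathcal F|}{d+1}\big) \le c_d'\,\epsilon^{-(d+1)}$ steps, having placed exactly that many sets of diameter at least $\nu_d$ into $\mathcal S$. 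Upon halting, no subfamily $\mathcal F'$ of size at least $\epsilon|\mathcal F|$ has $\conv(\mathcal F')$ avoiding $\mathcal S$, which is precisely the asserted weak $\epsilon$-net property with $\nu_d$ as above.

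I expect no genuine obstacle here: the proof of \cref{thm:vol-eps-nets} was written modularly so that only the input selection theorem need be exchanged, and \cref{thm:selection-d+1-diam} provides exactly what that argument consumes. The two points needing a word of care are the elementary bound $\binom{n}{k}/\binom{\epsilon n}{k} \le c_k\,\epsilon^{-k}$ (valid once $\epsilon n \ge k$), and the degenerate regime $\epsilon|\mathcal F| < d+1$ in which \cref{thm:selection-d+1-diam} is vacuous: there $|\mathcal F| < (d+1)\epsilon^{-1} \le (d+1)\epsilon^{-(d+1)}$, so it suffices to let $\mathcal S$ contain one segment of diameter at least $\nu_d$ inside each member of $\mathcal F$ (each diameter-$1$ convex set contains such a segment, since $\nu_d < 1$), which pierces the convex hull of every nonempty subfamily.
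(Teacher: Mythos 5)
Your proposal is correct and is exactly the paper's proof: the paper obtains this theorem by feeding \cref{thm:selection-d+1-diam} into the same greedy algorithm used for \cref{thm:vol-eps-nets}, with the identical counting bound $\binom{|\mathcal F|}{d+1}\big/\big(c_d\binom{\epsilon|\mathcal F|}{d+1}\big) \le c_d'\,\epsilon^{-(d+1)}$. Your extra remarks on the degenerate regime $\epsilon|\mathcal F| < d+1$ and the binomial ratio are sound and slightly more careful than the paper's presentation, but do not change the argument.
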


But diameter, while the most commonly studied quantitative parameter besides volume, is not the only parameter to which this method applies. Rolnick and Sober\'on's combinatorial results \cite{quant-p-q-theorems} include fractional Helly, Tverberg, and finite-approximation theorems for surface area, as well. Fitting those results into the framework of our proof gives:

\begin{theorem}[Selection theorem for surface area]\label{thm:selection-d+1-sa}
    For any finite family $\mathcal F$ of convex sets in $\R^d$ with surface area 1, there is a set $E$ with surface area at least $\textup{sa}_d>0$ that is contained in $\conv(\mathcal G)$ for at least $c_d \binom{|\mathcal F|}{d+1}$ choices of $\mathcal G \in \binom{\mathcal F}{d+1}$.
\end{theorem}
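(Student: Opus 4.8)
The plan is to instantiate the modular argument behind \cref{thm:selection-d+1} with surface area in place of volume, exactly as was done for diameter in \cref{thm:selection-d+1-diam}. That argument rests on four ingredients, and each has a surface-area counterpart. The quantitative fractional Helly, Tverberg, and finite-approximation theorems for surface area are all provided by Rolnick and Sober\'on \cite{quant-p-q-theorems}. The fourth ingredient, the inequality $\sum_i \vol(C_i) \geq \vol\big(\bigcup_i C_i\big)$ used in \cref{thm:vol-planes}, is replaced by the observation that slicing a convex body $K$ by a hyperplane $H$ into $K^+$ and $K^-$ satisfies $\operatorname{sa}(K^+) + \operatorname{sa}(K^-) \geq \operatorname{sa}(K)$ --- each piece inherits part of $\partial K$ together with its own copy of the slice $K \cap H$ --- so, iterating, any dissection of $K$ into convex pieces $C_1, \dots, C_N$ by finitely many hyperplanes satisfies $\sum_j \operatorname{sa}(C_j) \geq \operatorname{sa}(K)$. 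I would also use the monotonicity $\operatorname{sa}(K) \leq \operatorname{sa}(L)$ for convex bodies $K \subseteq L$, the analogue of the trivial $\vol(K) \leq \vol(L)$.

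With these tools in hand, I would first reprove \cref{thm:vol-planes} (hence \cref{thm:refining-vol-tv}) for surface area: given point sets $X_1, \dots, X_m$ of size $k \geq d+1$ with $\operatorname{sa}\big(\bigcap_{i=1}^m \conv(X_i)\big) \geq 1$, the at most $m\binom{k}{d}$ hyperplanes spanned by $d$-subsets of the $X_i$ cut $\bigcap_i \conv(X_i)$ into at most $\big(m\binom{k}{d}\big)^d$ convex pieces, so by the super-additivity above some piece $C$ has $\operatorname{sa}(C) \geq \big(m\binom{k}{d}\big)^{-d}$; since $\Int(C)$ meets none of these hyperplanes, Carath\'eodory gives a $(d+1)$-subset $Y_j \subseteq X_j$ with $C \subseteq \conv(Y_j)$ for every $j$.

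From here the proof should copy those of \cref{thm:selection-2d} and \cref{thm:selection-d+1} almost verbatim, with ``volume'' replaced by ``surface area'' throughout: apply the surface-area Tverberg theorem to $\mathcal F$ to get, among any bounded number of its members, a partition into boundedly many blocks whose convex hulls intersect in a convex set of surface area at least a dimensional constant; use the surface-area finite-approximation theorem together with the surface-area version of \cref{thm:refining-vol-tv} to cut each block down to $d+1$ sets while keeping the intersection's surface area bounded below; count tuples exactly as in \cref{thm:selection-2d} to deduce that a positive proportion of the $(d+1)$-tuples of $\mathcal F$ have convex hulls whose common intersection contains a set of surface area at least $\operatorname{sa}_d > 0$; and close with the surface-area fractional Helly theorem.

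The step I expect to require genuine care is that surface area, unlike volume, is not affine invariant, so the reduction to ellipsoids in \cref{thm:selection-2d} (``apply an affine map sending $E$ to $B^d$'') is not available. Instead one must apply the surface-area finite-approximation theorem of \cite{quant-p-q-theorems} directly to the convex bodies $\conv(\mathcal G_i)$, and the point to verify is that it approximates a prescribed convex core by the convex hull of boundedly many of the $\mathcal G_i$ with only a constant-factor loss of surface area, in a form where the approximating core is determined by the original core alone --- so that the separate approximations of the blocks still share a common set of positive surface area. Granting this, the argument goes through; note that, since the cited surface-area results carry no explicit constants, the conclusion is stated only as $\operatorname{sa}_d > 0$ rather than with an explicit bound as in \cref{thm:selection-d+1}.
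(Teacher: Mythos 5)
Your proposal is correct and follows exactly the route the paper intends: the paper's ``proof'' of \cref{thm:selection-d+1-sa} consists of the single remark that Rolnick and Sober\'on's fractional Helly, Tverberg, and finite-approximation theorems for surface area slot into the modular framework of \cref{thm:selection-d+1}, which is precisely what you carry out, supplying the superadditivity of surface area under hyperplane dissection as the analogue of the fourth ingredient. Your observation that the affine reduction to the unit ball is unavailable for surface area, and that the finite-approximation step must therefore produce a core determined by the common intersection alone, is a genuine point of care that the paper leaves implicit, and your resolution of it is sound.
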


\begin{theorem}[Weak $\epsilon$-nets for surface area]
    For every finite family $\mathcal F$ of diameter-1 convex sets and every $\epsilon > 0$, there is a family $\mathcal S$ of $O_d(\epsilon^{-(d+1)})$ sets, each of surface area at least $\textup{sa}_d > 0$, such that $\conv(\mathcal G)$ contains a set in $\mathcal S$ whenever $\mathcal G$ contains at least $\epsilon |\mathcal F|$ sets in $\mathcal F$.
\end{theorem}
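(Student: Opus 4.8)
The plan is to obtain this theorem at no extra cost from the surface-area selection theorem (\cref{thm:selection-d+1-sa}), by running the greedy algorithm used to prove \cref{thm:vol-eps-nets} with volumes replaced by surface areas throughout. So I would build the net $\mathcal S$ one set at a time, starting from $\mathcal S = \emptyset$.

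At each stage I would ask whether there is a subfamily $\mathcal F' \subseteq \mathcal F$ with $|\mathcal F'| \geq \epsilon|\mathcal F|$ such that $\conv(\mathcal F')$ contains no set currently in $\mathcal S$; this is a legitimate halting test, since $\mathcal F$ has only finitely many subfamilies. If such an $\mathcal F'$ exists, apply \cref{thm:selection-d+1-sa} to it, producing a set $E$ of surface area at least $\textup{sa}_d$ that is contained in $\conv(\mathcal G)$ for at least $c_d\binom{|\mathcal F'|}{d+1} \geq c_d\binom{\epsilon|\mathcal F|}{d+1}$ of the tuples $\mathcal G \in \binom{\mathcal F'}{d+1}$; then add $E$ to $\mathcal S$. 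The bookkeeping is exactly as in \cref{thm:vol-eps-nets}: for each such $\mathcal G$ we have $\conv(\mathcal G) \subseteq \conv(\mathcal F')$, so no member of $\mathcal S$ was contained in $\conv(\mathcal G)$ before this step while $E$ is afterward, and hence the number of tuples in $\binom{\mathcal F}{d+1}$ whose convex hull contains some set of $\mathcal S$ grows by at least $c_d\binom{\epsilon|\mathcal F|}{d+1}$ at every step. Since that number never exceeds $\binom{|\mathcal F|}{d+1}$, the process terminates after at most
\[
    \frac{\binom{|\mathcal F|}{d+1}}{c_d\binom{\epsilon|\mathcal F|}{d+1}} = O_d\big(\epsilon^{-(d+1)}\big)
\]
steps. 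When it halts, no admissible $\mathcal F'$ is left, which is to say $\conv(\mathcal G)$ contains a set of $\mathcal S$ whenever $\mathcal G \subseteq \mathcal F$ has $|\mathcal G| \geq \epsilon|\mathcal F|$; and every set ever placed in $\mathcal S$ has surface area at least $\textup{sa}_d$. Taking $\textup{sa}_d$ from \cref{thm:selection-d+1-sa} as the constant in the statement finishes the proof.

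I do not anticipate any real obstacle: the two ingredients are the surface-area selection theorem, used verbatim as a black box, and the monotone growth of the family of $(d+1)$-tuples whose convex hull catches a net set. The one step that deserves a moment's care is verifying that the tuples counted by the selection theorem are genuinely new each time, which is precisely the containment $\conv(\mathcal G) \subseteq \conv(\mathcal F')$ observed above.
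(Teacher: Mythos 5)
Your proof is correct and is exactly the argument the paper intends: feed the surface-area selection theorem (\cref{thm:selection-d+1-sa}) into the greedy algorithm from the proof of \cref{thm:vol-eps-nets}, with the same counting of newly pierced $(d+1)$-tuples at each step. No differences worth noting.
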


While Rolnick and Sober\'on also prove selection and weak $\epsilon$-net results in \cite{quant-p-q-theorems}, they focus on maximizing the guaranteed surface area, while our aim is to minimize the size of the net.

\subsection{Homogeneous selection theorem for diameter}\label{sec:hom-sel-extensions}

The main hurdles in proving a homogeneous selection theorem for other parameters the colorful Tverberg and a fractional Helly theorems. The other results utilized in our proof (such as \cref{thm:vol-planes} and the same-type lemma) depend on properties that hold for most continuous geometric parameters, so don't require anything more than cosmetic adjustment.

Happily, Frankl, Jung, and Tomon have already proven a diameter fractional Helly theorem for $(d+1)$-tuples \cite{improved-quant-frac-helly}, so we only need a colorful Tverberg theorem. A few papers \cite{soberon-diameter-helly,pablo-concave-funcs} have proven \emph{monochromatic} Tverberg theorems for diameter, and the best of these requires at least $8d^2(r-1)+1$ segments to form $r$ intersecting sets. However, no colorful Tverberg theorems have yet appeared. Our next theorem not only provides a colorful Tverberg theorem for diameter, but also it requires fewer total sets, by a factor of $d$.

\begin{theorem}[Colorful Tverberg for diameter]\label{thm:col-tv-diam}
    For any $2d$ families $\mathcal F_1,\dots,\mathcal F_{2d}$ in $\R^d$, each containing $4r-4$ line segments of length 1, there are $r$ disjoint transversals $\mathcal T_1,\dots,\mathcal T_r$ such that $\bigcap_{i=1}^r \conv(\mathcal T_i)$ contains a line segment of length at least $\delta_d > 0$.
\end{theorem}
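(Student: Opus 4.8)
The plan is to run the proof of \cref{thm:col-tv-vol-smaller} almost verbatim, replacing each of its ingredients by a diameter analogue. That proof has four moving parts: John's theorem (passing from general convex bodies to ellipsoids), the colorful Tverberg theorem for ellipsoids \cref{thm:col-tv-vol} (producing disjoint transversals of all the input families whose hulls share a large ellipsoid), the refinement lemma \cref{thm:refining-vol-tv} (shrinking each transversal until it meets only $d+1$ families), and a pigeonhole step isolating $r$ transversals that meet the \emph{same} $d+1$ families. For line segments the John step is vacuous---a segment is already as degenerate as a convex body gets---and the analogue of the refinement lemma is the finite-approximation lemma for segments, Lemma 3.1 of \cite{discrete-quant-tverberg}, which says that a segment lying in $\conv(X)$ already lies in $\conv(Y)$ for some $Y\subseteq X$ with $|Y|\le 2d$. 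It is this bound that replaces the target ``$d+1$'' by ``$2d$'', and it is also why the downstream diameter results---the same-type lemma and the Frankl--Jung--Tomon fractional Helly theorem \cite{improved-quant-frac-helly}---drop into the argument of \cref{sec:hom-sel} with only cosmetic changes.

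The one ingredient with no off-the-shelf replacement is the colorful Tverberg theorem for line segments itself, which I would prove by a parametrization argument echoing the proof of \cref{thm:col-tv-vol}. Record a directed unit segment in $\R^d$ by its midpoint together with its unit direction, a point of the $(2d-1)$-dimensional space $\R^d\times S^{d-1}$. The optimal colorful Tverberg theorem for points \cite{col-tv-opt} in a space of this dimension calls for $2d$ color classes---hence the $2d$ families in the statement---and, for general $r$, for color classes of size roughly $4(r-1)$ once one passes from the prime case by Bertrand's postulate (as in the footnote to \cref{thm:col-tv-vol}) and accounts for the two orientations of each segment---hence the count $4r-4$. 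Applying it to the $2d$ families produces $r$ pairwise-disjoint transversals $\mathcal T_1,\dots,\mathcal T_r$, one segment from each family, together with a point lying in every image $\conv(\widehat{\mathcal T_i})$ in parameter space; read back as a segment, this point is, modulo the reconstruction loss discussed next, a segment contained in each $\conv(\mathcal T_i)$. The pigeonhole over families used in \cref{thm:col-tv-vol-smaller} is unnecessary here, the number of families being already the target $2d$.

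The step I expect to be the real obstacle is controlling the \emph{length} of the common segment. Unlike volume, where log-concavity of the determinant on positive-definite matrices makes the passage through the parametrization lossless, length is a convex---not concave---function of the direction coordinates, so a convex combination of unit directions can cancel to a short vector and the segment read off from the raw Tverberg point could be arbitrarily short. The natural remedy is to ensure that the directions relevant to the chosen transversals are clustered in a single spherical cap---via an averaging argument over directions in the spirit of the monochromatic diameter Tverberg theorem of \cite{pablo-concave-funcs}, or by an explicit cap decomposition---and then to check that on one cap the midpoint--direction parametrization is within a bounded factor of an affine embedding, so that the reconstructed common segment has length at least a constant $\delta_d>0$. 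Making this clustering coexist with the tight per-family count $4r-4$ (rather than a bound inflated by the number of caps) is the delicate point; once it is in place, the colorful point Tverberg in the $(2d-1)$-dimensional parameter space, the direction-clustering step, and the finite-approximation lemma assemble into the theorem.
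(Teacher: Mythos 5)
Your skeleton matches the paper's: parametrize segments as points in a $(2d-1)$-dimensional affine space, apply the optimal colorful point Tverberg theorem with $2d$ color classes there, and handle the fact that length (unlike volume) can cancel under convex combination by first clustering directions. But the step you defer as ``the real obstacle'' --- the direction-clustering and the length control --- is the actual content of the proof, and as written your proposal leaves it open. The paper resolves it with a single averaging argument: fix $\delta_d$ so that the symmetric cap $C(x)=\{v\in S^{d-1}: |\langle v,x\rangle|>\delta_d\}$ has normalized measure $1-\tfrac{1}{5d}$, assign to each segment the cap about its direction, and show by a counting (or Markov plus union bound) argument over all $2d$ families that some single $v\in S^{d-1}$ lies in at least half the caps of \emph{every} family. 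This is where the factor of $2$ in $4r-4$ versus $2r-2$ comes from --- it is the loss in the cap-selection step, not ``the two orientations of each segment'' or Bertrand's postulate as you suggest. Your worry that the count gets ``inflated by the number of caps'' evaporates because one does not decompose the sphere into caps at all; one finds one good direction by averaging.

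Your choice of parametrization also needs repair. Midpoint plus unit direction lands in $\R^d\times S^{d-1}$, which is not a convex subset of a linear space, so the colorful Tverberg theorem does not directly apply, and ``within a bounded factor of an affine embedding'' on a cap is not an argument. The clean move is: after selecting the subfamilies $\mathcal F_i'$ of segments whose $v$-width is at least $\delta_d$, shrink each so its $v$-width is \emph{exactly} $\delta_d$, and parametrize $[x,y]\mapsto (x,y)\in\R^{2d}$. All these points lie in the affine hyperplane $\langle y-x,v\rangle=\delta_d$, a genuine $(2d-1)$-dimensional flat, so the colorful Tverberg theorem with $2d$ classes of $2r-2$ points applies verbatim; the common point $(\bar x,\bar y)$ of the $r$ transversal hulls satisfies $\langle \bar y-\bar x,v\rangle=\delta_d$ exactly by linearity of the functional, and $[\bar x,\bar y]\subseteq\conv(\mathcal T_i)$ by convexity. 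No bounded-factor comparison is needed. Finally, your first paragraph (John's theorem, the refinement lemma, the pigeonhole over $(d+1)$-subsets of families) describes the machinery of \cref{thm:col-tv-vol-smaller}, none of which appears in the proof of this theorem; the number of color classes is $2d$ from the start because that is what the point Tverberg theorem in dimension $2d-1$ demands.
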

\begin{proof}
    Given a unit vector $v$, the \emph{$v$-width} of a convex set $C$ is $\max_{x,y\in C}\, \langle v, y-x\rangle$. To prove the theorem, we will find subfamilies $\mathcal F_i' \subseteq \mathcal F_i$ and a direction vector $v$ such that the $v$-width of every segment in $\mathcal F'_i$ is at least $\delta_d$. Then we can apply the usual colorful Tverberg theorem in a higher-dimensional space, using a parametrization argument similar to our proof of \cref{thm:selection-d(d+3)/2}. ($v$-width is a common tool for diameter theorems in combinatorial geometry; see \cite{soberon-diameter-helly,melange-diameter,pablo-concave-funcs} for other applications.)
    
    Let $\mu$ be the surface measure of the sphere $S^{d-1}$, normalized so that $\mu(S^{d-1}) = 1$. Using any fixed unit vector $x \in S^{d-1}$, we define $\delta_d$ as the real number such that
    \[
        \mu\big( \{ v \in S^{d-1} : |\langle v,x\rangle| > \delta_d\} \big) = 1 - \frac{1}{5d}.
    \]
    The \emph{symmetric cap} about $x \in S^{d-1}$ will be 
    \[
        C(x) = \{ v \in S^{d-1} : |\langle v,x\rangle| > \delta_d\}.
    \]

    We use $[x,y]$ to denote the line segment between the points $x,y \in \R^d$. Consider the collection $\mathcal C_i \defeq \{C(y-x) : [x,y] \in \mathcal F_i\}$. We claim that the measure of points in $S^{d-1}$ that are covered by at most $|\mathcal F_i|/2$ symmetric caps in $\mathcal C_i$ is less than $1/2d$. If it were not, we get a contradiction: Counting multiplicity, we have $\mu(\bigcup \mathcal C_i) = (1-1/5d) |\mathcal F_i|$ for each $i \in [2d]$; however,
    \[
        \mu({\textstyle \bigcup{}} \mathcal C_i)
        < \frac{1}{2d} \cdot \frac{1}{2} |\mathcal F_i| + \frac{2d-1}{2d} \cdot |\mathcal F_i|
        = \Big( 1- \frac{1}{4d}\Big) |\mathcal F_i|.
    \]
    
    As a consequence, there is a single vector $v \in S^{d-1}$ that is simultaneously covered by at least $|\mathcal F_i|/2$ caps in each $\mathcal C_i$. We define
    \[
        \mathcal F'_i \defeq \{ [x,y] \in \mathcal F_i : v \in C(y-x) \}.
    \]
    Each line segment in $\mathcal F'_i$ has $v$-width at least $\delta_d$.

    We now move to the parametrization argument. By reducing the lengths, we may assume that each segment in the $\mathcal F'_i$ has $v$-width exactly $\delta_d$. We map the segments in $\mathcal F_i$ to a point set $X_i \subseteq \R^{2d}$ by $\phi\colon [x,y] \mapsto (x,y)$. Since $\langle y-x,v\rangle = \delta_d$ for every $[x,y] \in \bigcup_i \mathcal F'_i$, the points lie in a $(2d-1)$-dimensional subspace of $\R^{2d}$. Since $|X_i| \geq |\mathcal F_i|/2 = 2r-2$, we may apply the colorful Tverberg theorem \cite{colorful-tverberg-optimal} to the set $X_1,\dots, X_{2d}$ to obtain disjoint transversals $\mathcal S_1, \dots, \mathcal S_{r}$ such that $\bigcap_{i=1}^{2d} \conv(\mathcal S_i)$ contains a point $(\bar x, \bar y)$ such that $\langle \bar y - \bar x, v\rangle = \delta_d$. Each transversal $\mathcal S_i$ corresponds to a transversal $\mathcal T_i \defeq \{ [x,y] : (x,y) \in \mathcal S_i\}$ of $\mathcal F'_i$, and $[\bar x, \bar y] \subseteq \bigcup_{i=1}^r \conv(\mathcal T_i)$. Since $[\bar x, \bar y]$ has $v$-width, and thus length, at least $\delta_d$, we are done.
\end{proof}

\begin{remark}
    The argument in the previous proof that produces the vector $v$ can be rephrased probabilistically. Let $u$ be a uniform random vector in $S^{d-1}$. By linearity of expectation,
    \[
        \E \big[ \#\{C \in \mathcal C_i : u \notin C\}\big]
        = \frac{1}{5d} |\mathcal F_i|.
    \]
    Markov's inequality implies that
    \[
        \mathbb{P}\big( \#\{C \in \mathcal C_i : u \notin C\} \geq \frac{1}{2}|\mathcal F_i| \big)
        \leq \frac{|\mathcal F_i|/5d}{|\mathcal F_i|/2}
        = \frac{2}{5d},
    \]
    and the union bound tells us that
    \[
        \mathbb{P}\big( \#\{C \in \mathcal C_i : u \notin C\} \geq \frac{1}{2}|\mathcal F_i| \text{ for some } i \in [2d]\big)
        \leq \frac{4d}{5d}
        < 1,
    \]
    so there is some vector $v$ that is simultaneously contained in at least $|\mathcal F_i|/2$ caps of $\mathcal C_i$, for each $i \in [2d]$.\hfill$\lozenge$
\end{remark}

In fact, the number $4r-4$ can be replaced by $c(2r-2)$ for any $c > 1$, but smaller values of $c$ result in smaller values of $\delta_d$.

With the pieces in hand, proving a homogeneous selection theorem is a matter of substitution.

\begin{theorem}[Colorful homogeneous selection with diameter]\label{thm:colorful-diam-hom-sel}
    Suppose that $\mathcal F_1,\dots,\mathcal F_{2d}$ are collections of diameter-1 convex sets in $\R^d$ and $|\mathcal F_i| = n$ for every $i \in [d+1]$. If $n$ is sufficiently large, then there are
    \begin{itemize}
        \item $d+1$ of these families $\mathcal F_{i_1},\dots,\mathcal F_{i_{d+1}}$,
        \item a set $E$ with diameter at least $\delta_d > 0$, and 
        \item collections $\widetilde{\mathcal F}_j \subseteq \mathcal F_{i_j}$
    \end{itemize}
    such that $|\widetilde{\mathcal F}_j|\geq c_d |\mathcal F_{i_j}|$ and $E \in \conv(\widetilde{F}_1,\dots,\widetilde{F}_{d+1})$ for any selection of sets $\widetilde{F}_i \in \widetilde{\mathcal F}_i$.
\end{theorem}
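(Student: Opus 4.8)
The plan is to transcribe the proof of \cref{thm:colorful-hom-vol-sel}, replacing each volumetric ingredient by its diameter counterpart. Two of these counterparts are already in hand: the colorful Tverberg theorem for diameter (\cref{thm:col-tv-diam}) and the diameter fractional Helly theorem for $(d+1)$-tuples of Frankl, Jung, and Tomon \cite{improved-quant-frac-helly}, which will play the role that the $(d+1)$-tuple volumetric fractional Helly theorem plays in the proof of \cref{thm:colorful-hom-vol-sel}. The one further tool needed is a \emph{reduced} colorful Tverberg theorem for diameter, modeled on \cref{thm:col-tv-vol-smaller}: for any $2d$ families of unit segments in $\R^d$, each of size $4\binom{2d}{d+1}(r-1)$, there are $d+1$ of the families and $r$ disjoint transversals of those $d+1$ families whose convex hulls share a segment of length at least $\delta_{d,r} > 0$. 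I would prove this exactly as \cref{thm:col-tv-vol-smaller} is proved: apply \cref{thm:col-tv-diam} with parameter $k \defeq \binom{2d}{d+1}(r-1)+1$ to get $k$ disjoint transversals with a common long segment, cut each transversal down to $d+1$ sets while keeping a segment of controlled length by way of a diameter version of \cref{thm:refining-vol-tv}, and then apply the pigeonhole principle to find $r$ of the reduced transversals meeting the same $d+1$ colors.

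The only genuinely new input is that diameter version of \cref{thm:refining-vol-tv}, which rests on a diameter version of \cref{thm:vol-planes}: for finite sets $X_1,\dots,X_m \subseteq \R^d$ whose convex hulls have a common unit segment, one wants $Y_i \subseteq X_i$ with $|Y_i| = d+1$ such that $\bigcap_i \conv(Y_i)$ still contains a segment, now of length at least $\big(m\binom{|X_i|}{d}+1\big)^{-1}$. The proof is the one for \cref{thm:vol-planes} with one-dimensional Lebesgue measure in place of $d$-dimensional: the at most $m\binom{|X_i|}{d}$ hyperplanes spanned by $d$-subsets of the $X_i$ cut any unit segment in $\bigcap_i \conv(X_i)$ into at most $m\binom{|X_i|}{d}+1$ subsegments, so some subsegment of length at least $\big(m\binom{|X_i|}{d}+1\big)^{-1}$ lies, relative interior and all, inside a single cell of the arrangement; Carath\'eodory's theorem then supplies, for each $j$, a $(d+1)$-point subset $Y_j \subseteq X_j$ whose convex hull contains that cell, hence that subsegment, since the simplices spanned by $X_j$ can only change across a hyperplane. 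This is the step in which the sub-additivity of volume used in \cref{thm:vol-planes} becomes the sub-additivity of length.

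Granting these, \cref{thm:colorful-diam-hom-sel} follows by the argument of \cref{thm:colorful-hom-vol-sel}. Take $\ell = 2d$ and $t = 4\binom{2d}{d+1}d$; the complete $\ell$-partite $\ell$-uniform hypergraph on $\mathcal F_1,\dots,\mathcal F_{2d}$ contains at least $\gamma_d n^{\ell t}$ copies of $K^{\ell}(t)$, and the reduced colorful Tverberg theorem for diameter (with $r = d+1$) produces in each copy $d+1$ of the families and $d+1$ disjoint transversals whose convex hulls share a segment of length at least $\delta_d' > 0$. Since each $(d+1)$-tuple of transversals lies in at most $n^{\ell t - (d+1)^2}$ copies, double-counting and pigeonhole fix $d+1$ of the families, which after reordering we call $\mathcal F_1,\dots,\mathcal F_{d+1}$, that together carry at least $\gamma_d' n^{(d+1)^2}$ such transversal tuples; hence the family $\mathcal C = \{\conv(C_1,\dots,C_{d+1}) : C_i \in \mathcal F_i\}$, of size $n^{d+1}$, has a positive fraction of $(d+1)$-tuples whose intersection has diameter at least $\delta_d'$. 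The diameter fractional Helly theorem for $(d+1)$-tuples then yields a set $\set{H} \subseteq \mathcal C$ with $|\set{H}| \geq \beta_d n^{d+1}$ whose intersection $E \defeq \bigcap \set{H}$ has diameter at least $\delta_d''$. Regarding $\set{H}$ as a dense subhypergraph of $\prod_{i=1}^{d+1} \mathcal F_i$, the weak hypergraph regularity lemma (\cref{thm:hypergraph-regularity}) gives subfamilies $\mathcal F_i' \subseteq \mathcal F_i$ with $|\mathcal F_i'| \geq \alpha_d |\mathcal F_i|$ such that every product of $\delta_{d,d+2}$-dense subfamilies contains an edge of $\set{H}$, and then the diameter same-type lemma applied to $(\mathcal F_1',\dots,\mathcal F_{d+1}',\{E\})$ forces, exactly as in the volume proof, $\widetilde E \subseteq \conv(\widetilde F_1,\dots,\widetilde F_{d+1})$ for every selection from suitably large subfamilies $\widetilde{\mathcal F}_j$, with $\delta_d = \diam(\widetilde E)$ and $c_d = \delta_{d,d+2}$.

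The diameter same-type lemma used above is the cosmetic modification of \cref{thm:same-type-volume} the paper promises: a hyperplane leaves one piece of a diameter-$1$ convex set with diameter at least $\frac12$ (the longer half of a diameter-realizing segment), so one runs the ham-sandwich refinement on the one-dimensional length measures of these segments rather than on volume measures, trimming each surviving set back to a sub-segment of a fixed length after every cut; the constants are $v_{d,d+1} \geq 3^{-2^d+1}$ and $\delta_{d,d+1} \geq 4^{-2^d+1}$ as before. I expect the only step needing real care to be the diameter version of \cref{thm:vol-planes} of the second paragraph — in particular, verifying that the arrangement cell containing the long subsegment is covered by a single Carath\'eodory simplex from each $X_j$ — since that is the one place where the argument is not a literal substitution; everything else is bookkeeping, and tracking the constant down the chain (colorful Tverberg $\to$ refining $\to$ pigeonhole $\to$ fractional Helly $\to$ regularity $\to$ same-type) leaves $\delta_d$ of the shape $4^{-d^{O(1)}}$ times a same-type factor $c^{-2^d}$.
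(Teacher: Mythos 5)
Your proposal is correct and follows exactly the route the paper takes: the paper's own argument for \cref{thm:colorful-diam-hom-sel} is a two-line sketch that says to rerun the proof of \cref{thm:colorful-hom-vol-sel} with the diameter colorful Tverberg theorem (\cref{thm:col-tv-diam}), the Frankl--Jung--Tomon diameter fractional Helly theorem for $(d+1)$-tuples, and a diameter same-type lemma substituted in. You have in fact supplied more detail than the paper does --- in particular the reduced colorful Tverberg theorem for diameter and the segment analogues of \cref{thm:vol-planes,thm:refining-vol-tv}, which the sketch leaves implicit --- and those details check out.
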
\closeprooftrue
\begin{proof}[Proof sketch]
    Nearly identical to the proof of \cref{thm:colorful-hom-vol-sel}, except the volumetric colorful Tverberg theorem, fractional Helly theorem, and same-type lemma are replaced by the corresponding results for diameter. (The same-type theorem for diameter may be proven using the exact same proof as for volume, and \cite{improved-quant-frac-helly} proves the diameter fractional Helly theorem for $(d+1)$-tuples.)
\end{proof}

\begin{corollary}[Homogeneous selection with diameter]
    For any large enough family $\mathcal F$ of diameter-1 convex sets in $\R^d$, there are $d+1$ disjoint subfamilies $\mathcal G_1, \dots, \mathcal G_{d+1}$ of $\mathcal F$, each containing at least $c_d |\mathcal F|$ sets, and a convex set $E \subseteq \R^d$ of diameter $\delta_d > 0$, such that $E \subseteq \conv(G_1,\dots,G_{d+1})$ for every selection $G_i \in \mathcal G_i$.
\end{corollary}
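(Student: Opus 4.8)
The plan is to obtain the corollary from the colorful homogeneous selection theorem for diameter (\cref{thm:colorful-diam-hom-sel}) by the same partitioning device that derives \cref{thm:vol-hom-sel} from \cref{thm:colorful-hom-vol-sel}. All of the substance is already inside \cref{thm:colorful-diam-hom-sel}; what remains is a short bookkeeping argument.

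First I would split $\mathcal F$ into $2d$ pairwise disjoint subfamilies. Discarding at most $2d-1$ sets, partition $\mathcal F$ into $\mathcal F_1,\dots,\mathcal F_{2d}$ with $|\mathcal F_i| = n$ for each $i$, where $n \defeq \lfloor |\mathcal F| / (2d) \rfloor$. Each $\mathcal F_i$ is a family of diameter-$1$ convex sets in $\R^d$, so provided $|\mathcal F|$ is large enough that $n$ exceeds the threshold required by \cref{thm:colorful-diam-hom-sel}, that theorem applies. It yields $d+1$ of these families $\mathcal F_{i_1},\dots,\mathcal F_{i_{d+1}}$, a convex set $E \subseteq \R^d$ of diameter at least $\delta_d > 0$, and subfamilies $\widetilde{\mathcal F}_j \subseteq \mathcal F_{i_j}$ with $|\widetilde{\mathcal F}_j| \geq c_d n$, such that $E \subseteq \conv(\widetilde F_1,\dots,\widetilde F_{d+1})$ for every selection $\widetilde F_j \in \widetilde{\mathcal F}_j$.

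Now set $\mathcal G_j \defeq \widetilde{\mathcal F}_j$ for $j \in [d+1]$. These are subfamilies of $\mathcal F$, and they are pairwise disjoint because they lie in the pairwise disjoint families $\mathcal F_{i_1},\dots,\mathcal F_{i_{d+1}}$. Moreover $|\mathcal G_j| \geq c_d n = c_d \lfloor |\mathcal F|/(2d) \rfloor \geq c_d' |\mathcal F|$ for a constant $c_d' > 0$ depending only on $d$, once $|\mathcal F|$ is large, and the containment $E \subseteq \conv(G_1,\dots,G_{d+1})$ holds for every selection $G_j \in \mathcal G_j$ directly from the conclusion of \cref{thm:colorful-diam-hom-sel}. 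This is precisely the statement of the corollary (with the constant $c_d$ there taken to be $c_d'$).

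There is no real obstacle: the only points needing a moment's care are ensuring the $2d$ subfamilies are exactly equinumerous so the hypothesis $|\mathcal F_i| = n$ is met, and verifying that ``$\mathcal F$ large enough'' translates into ``$n$ large enough'' for \cref{thm:colorful-diam-hom-sel} to apply.
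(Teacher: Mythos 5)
Your proposal is correct and matches the paper's proof exactly: both partition $\mathcal F$ into $2d$ disjoint subfamilies of size $\lfloor |\mathcal F|/(2d)\rfloor$ and apply \cref{thm:colorful-diam-hom-sel}. Your additional bookkeeping (disjointness of the $\mathcal G_j$ and the passage from $c_d n$ to $c_d'|\mathcal F|$) is exactly the detail the paper leaves implicit.
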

\begin{proof}
    Let $\mathcal F_1,\dots,\mathcal F_{2d}$ be disjoint subsets of $\mathcal F$, each with exactly $\lfloor |\mathcal F|/2d\rfloor$ sets, and apply \cref{thm:colorful-diam-hom-sel}.
\end{proof}

\section{Open problems and future work}

Quantitative colorful Tverberg theorems, which are central to our proof of the homogeneous selection theorem, have so far received little attention. One basic question, much in line with previous research on colorful theorems, is the minimum number of color classes necessary for such a theorem. Many quantitative theorems hold with conditions on $2d$-tuples, so it's plausible the same is true for the colorful Tverberg theorem.

\begin{conjecture}\label{conj:tverberg}
    For any $2d$ families $\mathcal F_1,\dots,\mathcal F_{2d}$ in $\R^d$, each containing $f(d,r)$ convex sets of volume 1, there are $r$ disjoint transversals $\mathcal T_1,\dots,\mathcal T_r$, such that $\bigcap_{i=1}^r \conv(\mathcal T_i)$ has volume at least $v_d > 0$.
\end{conjecture}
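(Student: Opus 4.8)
The plan is to imitate the proof of the colorful Tverberg theorem for diameter (\cref{thm:col-tv-diam}), where the $2d$ color classes arose because a unit segment is recorded by its two endpoints, a point of $\R^{2d}$, and restricting to segments of a fixed $v$-width cut the ambient dimension to $2d-1$ --- exactly the dimension in which the ordinary colorful Tverberg theorem needs $2d$ color classes. For volume I would aim for the same endpoint: a parametrization of a positive fraction of each family $\mathcal F_i$ by points of an affine copy of $\R^{2d-1}$, compatible with convex hulls, after which the ordinary colorful Tverberg theorem with $2d$ color classes and $2r-2$ points per class finishes the argument. Since only a fraction $1/C_d$ of each family would survive the normalization below, one would take $f(d,r)$ of order $r$, say $f(d,r) = 2C_d(r-1)$.

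First I would replace each set by its John ellipsoid (\cref{thm:john}), so that every $F \in \mathcal F_i$ contains an ellipsoid of volume at least $d^{-d}$; a global $\mathrm{SL}(d)$ map may be applied for free, since it preserves both volume and convex hulls. A general ellipsoid $A(B^d)+x$ is a point of the $\tfrac12 d(d+3)$-dimensional space $P\!D^d\times\R^d$ --- this is why \cref{thm:col-tv-vol} has $\tfrac12 d(d+3)$ color classes --- whereas an \emph{axis-aligned} ellipsoid $\diag(\lambda_1,\dots,\lambda_d)(B^d)+x$ uses only $2d$ parameters, and an axis-aligned ellipsoid of a prescribed volume uses $2d-1$. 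So the crux would be a \emph{volumetric positional normalization}: find a single rotation $U \in \mathrm{SO}(d)$ and subfamilies $\mathcal F_i' \subseteq \mathcal F_i$ with $|\mathcal F_i'| \geq |\mathcal F_i|/C_d$ such that $U(E)$ contains an axis-aligned ellipsoid of volume at least $v_d$ for every $E$ arising from $\mathcal F_i'$. Shrinking these inscribed ellipsoids to a common volume, applying $U^{-1}$, and then parametrizing them as points of the $(2d-1)$-dimensional set $\{(\lambda,x) : \prod_i \lambda_i = \mathrm{const}\}$ --- on which ``lies in the convex hull of'' is respected exactly as in the proof of \cref{thm:selection-d(d+3)/2}, and on which the volume bound holds automatically --- would reduce the problem to colorful Tverberg in $\R^{2d-1}$.

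The main obstacle is precisely this normalization step, which is why the statement remains a conjecture. In the diameter proof the corresponding step is a clean pigeonhole on the sphere $S^{d-1}$: a unit segment has $v$-width at least $\delta_d$ for a $(1-\tfrac1{5d})$-fraction of directions $v$, so a single direction serves a majority of the segments in each class. The analogue here would be a Haar-measure bound on $\mathrm{SO}(d)$ --- that for each ellipsoid $E$ of volume at least $d^{-d}$, the set of $U$ for which $U(E)$ contains an axis-aligned ellipsoid of volume at least $v_d$ has measure at least $\mu_d > 0$ --- but this is \emph{false} for $v_d$ and $\mu_d$ independent of $E$: the John ellipsoid of a thin slab is arbitrarily eccentric, and a highly eccentric ellipsoid contains a non-negligible axis-aligned ellipsoid only for rotations that nearly align its longest axis with a coordinate axis, a set of measure tending to $0$. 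Getting around this seems to need a new idea: perhaps a preliminary pass that detects the families (or subfamilies) whose John ellipsoids share an approximate thin direction and handles them in a lower-dimensional slice by induction on $d$, or a softer normalization that replaces ``axis-aligned'' by a bounded-eccentricity condition relative to a well-chosen frame, together with a separate treatment of the genuinely flat bodies. I expect that taming this eccentricity phenomenon --- not any of the colorful-Tverberg bookkeeping --- is the real content of \cref{conj:tverberg}.
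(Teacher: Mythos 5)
The statement you are addressing is \cref{conj:tverberg}, which the paper explicitly leaves open: there is no proof of it in the paper to compare against, and your proposal is not a proof either, as you yourself conclude. What you have written is a correct diagnosis of why the most natural line of attack fails. The dimension count is right --- axis-aligned ellipsoids of prescribed volume form a $(2d-1)$-parameter family, and the colorful Tverberg theorem in $\R^{2d-1}$ wants exactly $2d$ color classes, matching the conjecture --- and the obstruction you identify at the normalization step is genuine. Already for $d=2$: if an ellipse has semi-axes $a$ and $b=1/a$, the largest axis-aligned ellipse inside its rotation by $\phi$ has area at most $\pi b^2/\lvert\sin 2\phi\rvert$ (compare widths in the direction of the rotated short axis), so the set of rotations for which it contains an axis-aligned ellipse of area $v$ has measure $O(b^2/v)$, which tends to $0$ with the eccentricity. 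No uniform Haar-measure lower bound is available, so the pigeonhole that powers the diameter proof (\cref{thm:col-tv-diam}) --- where a unit segment has $v$-width at least $\delta_d$ for a $(1-\frac{1}{5d})$-fraction of directions, a concentration fact with no volumetric analogue --- does not transfer. This is consistent with the state of the art recorded in the paper: the best known volumetric colorful Tverberg theorem (\cref{thm:col-tv-vol}) works with the full $\frac{1}{2}d(d+3)$-parameter family of ellipsoids and correspondingly needs $\frac{1}{2}d(d+3)$ color classes.

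One smaller point, should you pursue this route: the set $\{(\lambda,x) : \prod_i \lambda_i = \textup{const}\}$ is a hypersurface, not an affine copy of $\R^{2d-1}$, so the colorful Tverberg theorem in $\R^{2d-1}$ does not apply to it directly. This is repairable: parametrize instead by $(\log\lambda_1,\dots,\log\lambda_d,x)$ restricted to the hyperplane $\sum_i \log\lambda_i = \textup{const}$; a convex combination in these coordinates produces coordinatewise geometric means of the semi-axes, which are dominated by the arithmetic means, so the resulting axis-aligned ellipsoid still lies in the convex hull of the originals and has exactly the prescribed volume. But this repair does nothing for the normalization step, which, as you correctly say, is where the real content of \cref{conj:tverberg} lies.
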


Any proof of a colorful Tverberg theorem with fewer color classes immediately implies a colorful homogeneous selection theorem with the same number of color classes. Given that the volumetric (nonhomogeneous) selection theorem (\cref{thm:selection-d+1}) and the fractional Helly theorem (\cref{thm:frac-helly-vol-d+1}) hold for $(d+1)$-tuples, it may even be the case that the colorful Tverberg theorem holds with $d+1$ color classes

An alternative goal to decreasing the number of color classes is maximizing the volume $v_d$, in the style of Rolnick and Sober\'on's quantitative results \cite{quant-p-q-theorems}. What is the minimum number of color classes, or the minimum number of convex sets in each class, needed to guarantee a volume of $1-\delta$? Alternatively, in the monochromatic version (\cref{thm:vol-hom-sel}), where the number of partitions is optimal, how large can the guaranteed volume be? Similarly, how much can the volume guarantee in \cref{thm:same-type-volume} be increased?

Then there is the colorful homogeneous selection theorem itself. Do we really need $\frac{1}{2}d(d+3)$ color classes? Probably not.

\begin{conjecture}[Optimal colorful homogeneous selection with volume]\label{conj:colorful-hom-vol-sel}
    Suppose that $\mathcal F_1,\dots,\mathcal F_{d+1}$ are collections of volume-1 convex sets in $\R^d$ and $|\mathcal F_i| = n$ for every $i \in [d+1]$. If $n$ is sufficiently large, then there are
    \begin{itemize}
        \item collections $\widetilde{\mathcal F}_i \subseteq \mathcal F_i$ and 
        \item a set $E$ with volume at least $v_d > 0$
    \end{itemize}
    such that $|\widetilde{ \mathcal F}_i|\geq c_d |\mathcal F_i|$ and $E \in \conv(\widetilde{F}_1,\dots,\widetilde{F}_{d+1})$ for any selection of sets $\widetilde{F}_i \in \widetilde{\mathcal F}_i$.
\end{conjecture}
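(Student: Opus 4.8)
Since \cref{conj:colorful-hom-vol-sel} is open, what follows is a plan rather than a proof. The plan is to isolate the single feature of the proof of \cref{thm:colorful-hom-vol-sel} that forces $\frac12 d(d+3)$ color classes and to show that nothing else in that argument does. Tracing the proof, the number $\frac12 d(d+3)$ enters in exactly one place --- the reduced volumetric colorful Tverberg theorem (\cref{thm:col-tv-vol-smaller}), which inherits it from \cref{thm:col-tv-vol} and hence from the fact that ellipsoids in $\R^d$ form a $\frac12 d(d+3)$-dimensional family. Every other ingredient already accommodates $d+1$ classes: the volumetric fractional Helly theorem used there is a $(d+1)$-tuple statement (\cref{thm:frac-helly-vol-d+1}); the weak hypergraph regularity lemma (\cref{thm:hypergraph-regularity}) is an $\ell$-partite statement for arbitrary $\ell$; and the volumetric same-type lemma (\cref{thm:same-type-volume}) holds for all $m \geq d+1$. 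So the proposal has two parts: \textbf{(a)} establish a volumetric colorful Tverberg theorem with only $d+1$ color classes --- a strengthening of \cref{conj:tverberg} --- and \textbf{(b)} rerun the proof of \cref{thm:colorful-hom-vol-sel} with that theorem as input, omitting the pigeonhole step that selects $d+1$ of the input families, since now there are exactly $d+1$.

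Part (b) is essentially bookkeeping. Grant a function $f(d,r)$ such that any $d+1$ families of volume-$1$ convex sets in $\R^d$, each of size $f(d,r)$, admit $r$ pairwise disjoint transversals whose convex hulls have a common intersection of volume at least $v_d > 0$; apply this with $r = d+1$ to subfamilies of size $f(d,d+1)$ of each $\mathcal F_i$. Because each resulting system of $d+1$ transversals is pinned down by its $(d+1)^2$ sets, ranging over all such choices yields a positive fraction of $\binom{|\mathcal C|}{d+1}$ tuples of $\mathcal C \defeq \{\conv(C_1,\dots,C_{d+1}) : C_i \in \mathcal F_i\}$ whose intersection has volume at least $v_d$. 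Now \cref{thm:frac-helly-vol-d+1} extracts $\set{H} \subseteq \mathcal C$ with $|\set{H}| \geq \beta_d\, n^{d+1}$ and $E \defeq \bigcap \set{H}$ of volume at least $v_d'' > 0$; regarding $\set{H}$ as a dense $(d+1)$-partite subhypergraph of $\prod_i \mathcal F_i$, pulling out $\mathcal F_i' \subseteq \mathcal F_i$ via the hypergraph regularity lemma, and applying \cref{thm:same-type-volume} to the $d+2$ families $(\mathcal F_1',\dots,\mathcal F_{d+1}',\{E\})$ completes the argument exactly as in the proof of \cref{thm:colorful-hom-vol-sel}, with $v_d$ the volume of the refined copy of $E$ and $c_d$ the same-type constant. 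Nothing here uses that the number of classes was $\frac12 d(d+3)$ rather than $d+1$.

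The heart of the matter, and the step I expect to be the obstruction, is part (a). Passing to John ellipsoids (\cref{thm:john}) reduces it to a colorful Tverberg theorem for ellipsoids with $d+1$ color classes, but the parametrization proof of \cref{thm:col-tv-vol} seems fundamentally unable to drop below $\frac12 d(d+3)$ classes: the space of ellipsoids in $\R^d$ genuinely has dimension $d(d+3)/2$, and any family of convex bodies rich enough to sit with volume bounded below inside \emph{every} volume-$1$ convex set appears to need comparably many parameters (inscribed balls, or homothets of a fixed simplex, already fail for thin boxes). A plausible substitute is a direct argument: run the optimal colorful \emph{point} Tverberg theorem with $d+1$ color classes, each of size $2r-1$, on suitable point samples of the ellipsoids to obtain $r$ disjoint colorful simplices sharing a common point, and then ``thicken'' that point into a body of positive volume by a region argument in the spirit of \cref{thm:vol-planes}. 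The genuine difficulty is that the shared point depends on the chosen transversal, so one must synchronize all $r$ transversals to share a set of \emph{positive volume}, not merely a point --- and this synchronization is precisely what the ellipsoid parametrization supplies and what a bare point-Tverberg argument does not. A new idea seems to be needed here; in particular, the direction-selection ($v$-width) device behind the diameter colorful Tverberg theorem (\cref{thm:col-tv-diam}) has no obvious volumetric analogue. Conversely, the machinery of part (b), in the form used in the proof of \cref{thm:colorful-hom-vol-sel}, turns \emph{any} decrease in the number of color classes achievable in (a) --- even down to the $2d$ of \cref{conj:tverberg} --- into the same decrease for the colorful homogeneous selection theorem, with \cref{conj:colorful-hom-vol-sel} the endpoint, reached once (a) is proved with $d+1$ classes.
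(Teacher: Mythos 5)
The statement you are addressing is \cref{conj:colorful-hom-vol-sel}, which the paper poses as an open conjecture and does not prove, so there is no proof of record to compare against; your proposal is forthright that it is a plan rather than a proof. Your diagnosis is accurate. The only place $\frac12 d(d+3)$ enters the proof of \cref{thm:colorful-hom-vol-sel} is through \cref{thm:col-tv-vol-smaller}, which inherits it from the ellipsoid parametrization behind \cref{thm:col-tv-vol}; the fractional Helly theorem (\cref{thm:frac-helly-vol-d+1}), the regularity lemma (\cref{thm:hypergraph-regularity}), and the same-type lemma (\cref{thm:same-type-volume}) all already operate at the $(d+1)$-tuple level. Your part (b) bookkeeping is correct --- with $\ell = d+1$ the count $\gamma_d n^{(d+1)t}/n^{(d+1)t-(d+1)^2}$ still yields a positive fraction of $\binom{|\mathcal C|}{d+1}$, and the rest of the argument is unchanged --- and it coincides with the paper's own remark in the open-problems section that any colorful Tverberg theorem with fewer color classes immediately yields a colorful homogeneous selection theorem with the same number of classes.

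The genuine gap is exactly where you place it: part (a). A volumetric colorful Tverberg theorem with $d+1$ color classes is strictly stronger than the paper's \cref{conj:tverberg}, which asks only for $2d$ classes and is itself open, and no viable strategy is supplied. The substitute you sketch --- running the optimal colorful point Tverberg theorem on point samples of the ellipsoids and then thickening --- fails for the reason you yourself identify: the topological Tverberg machinery produces a single common point with no volume attached, while \cref{thm:vol-planes} can only shrink a common set \emph{already known} to have volume at least $1$ down to $(d+1)$-element subfamilies; it cannot manufacture positive common volume from a point. (This is also why the $v$-width trick works for diameter in \cref{thm:col-tv-diam} but has no volumetric analogue: a single direction carries a one-dimensional quantity, whereas volume requires controlling $d(d+3)/2$ parameters at once.) So the proposal is a correct conditional reduction of \cref{conj:colorful-hom-vol-sel} to a stronger open conjecture, not a proof, and the conjecture remains open.
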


Improving the colorful Tverberg theorem is one approach to this conjecture, but there may be others, as well.

Of course, \Cref{conj:colorful-hom-vol-sel,conj:tverberg} may be phrased for diameter or surface area, as well. In fact, another good problem is to prove a colorful Tverberg theorem for surface area, with any number of color classes, as no such result has yet appeared. Rolnick and Sober\'on have proven a fractional Helly theorem for surface area \cite{quant-p-q-theorems}. Put together, these two results would prove a homogeneous selection theorem for surface area (though the number of color classes may be fairly large without further improvements).

As for the first half of this paper, the most obvious question is reducing the size of a volumetric weak $\epsilon$-net. We'll say that a family $\mathcal F$ \emph{has a weak $\epsilon$-net of $k$ sets and volume $v > 0$} if there is a collection $\mathcal S$ of $k$ sets, each of volume $v$, such that $\conv(\mathcal G)$ contains a set in $\mathcal S$ whenever $|\mathcal G|\geq \epsilon \mathcal F$.

\begin{question}\label{q:optimal-e-nets}
     What is the minimum value $\alpha_d$ such that there is a real number $v_d > 0$ such that every finite family $\mathcal F$ of convex sets in $\R^d$ has a weak $\epsilon$-net with $O_d(\epsilon^{-\alpha_d})$ sets and volume $v_d$?
\end{question}

Finding small nets is a hard problem even for point sets, so it may seem unlikely that adding volume to the problem will make it any easier. But one never knows, and perhaps working with sets will allow some freedom that points do not. In any case, while determining the optimal $\alpha_d$ may be a difficult problem, obtaining a slight improvement, as has been done for weak $\epsilon$-nets of points \cite{eps-net-upper-d,matousek-eps-net-upper,matousek-eps-net-survey}, may be more tractable.

\begin{question}
    (Regarding \cref{q:optimal-e-nets}) Is $\alpha_d < d+1$?
\end{question}

Similarly, what of lower bounds for $\alpha_d$? Here is a simple construction to show that $\alpha_d \geq 1$. Let $H_0,\dots,H_{1/\epsilon}$ be parallel hyperplanes in $\R^d$;  for each $i \in [1/\epsilon]$, let $\mathcal F_i$ be a collection of $\epsilon n$ volume-1 sets that lie between $H_{i-1}$ and $H_i$; and define $\mathcal F = \bigcup_{i=1}^{1/\epsilon} \mathcal F_i$. Since $|\mathcal F_i| = \epsilon |\mathcal F|$, any $\epsilon$-net for $\mathcal F$ must contain at least one set between $H_{i-1}$ and $H_{i}$. So any $\epsilon$-net contains at least $\epsilon^{-1}$ sets.

Finally, a number of the results in this paper rely on \cref{thm:vol-planes} to reduce to a $(d+1)$-tuple from a larger set. The volume in the conclusion of this theorem is likely not close to optimal; how much can it be improved?


\vspace{1.5\baselineskip}
{\centering
    \textsc{acknowledgements}\\[0.3em]
}
\addcontentsline{toc}{section}{Acknowledgements}
\noindent
Thanks to Pablo Sober\'on for providing helpful pointers on the literature and the National Science Foundation for partially supporting this work through a Graduate Research Fellowship (Grant No. 2141064).

\vspace{1.5\baselineskip}
\noindent
\textsc{Travis Dillon} (\texttt{dillont@mit.edu}), Massachusetts Institute of Technology: Cambridge, MA, USA

\newpage

\let\OLDthebibliography\thebibliography
\renewcommand\thebibliography[1]{
  \OLDthebibliography{#1}
  \setlength{\parskip}{0pt}
  \setlength{\itemsep}{3pt plus 0.3ex}
}

{\setstretch{1.0}
    \bibliographystyle{amsplain-nodash}
    \bibliography{bibliography}
}

\newpage

\appendix
\section{Gallery of theorems}\label{sec:misc}
\setcounter{theorem}{0}

\subsection*{Quantitative combinatorial geometry}

\begin{theorem}[Sarkar--Xue--Sober\'on \cite{pablo-concave-funcs}, Theorem 4.1.2]\label{thm:ellipse-tverberg}
    Any family $\mathcal E$ of $(r-1)\big(\frac{1}{2}d(d+3)+1)+1$ ellipsoids in $\R^d$ with volume 1 can be partitioned into $r$ disjoint families $\mathcal E_1, \dots, \mathcal E_r$ such that $\bigcap_{i=1}^r \conv(\mathcal E_i)$ contains an ellipsoid of volume 1.
\end{theorem}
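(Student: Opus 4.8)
The plan is to reduce the statement to the classical affine Tverberg theorem by the same parametrization of ellipsoids used in the proof of \cref{thm:selection-d(d+3)/2}. Recall that every ellipsoid in $\R^d$ can be written as $A(B^d)+x$ for a unique positive-definite matrix $A$ and a vector $x\in\R^d$, so the set of ellipsoids is identified with $P\!D^d\times\R^d$, a convex subset of $\R^{d(d+1)/2}\times\R^d=\R^{d(d+3)/2}$. Writing $N\defeq\frac12 d(d+3)$, the hypothesis $|\mathcal E|=(r-1)\big(\frac12 d(d+3)+1\big)+1$ is exactly the Tverberg number $(r-1)(N+1)+1$ in dimension $N$.

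The first step is to record, for each $E\in\mathcal E$, a pair $(A_E,x_E)\in P\!D^d\times\R^d$ with $E=A_E(B^d)+x_E$, and to set $Y\defeq\{(A_E,x_E):E\in\mathcal E\}\subseteq\R^N$, a (multi)set of $(r-1)(N+1)+1$ points. Tverberg's theorem then produces a partition of $Y$ into $r$ nonempty parts $Y_1,\dots,Y_r$ and a point $(A,x)\in\bigcap_{j=1}^r\conv(Y_j)$; since $P\!D^d\times\R^d$ is convex, $(A,x)$ again lies in it, so $A$ is positive definite. Let $\mathcal E_j\subseteq\mathcal E$ be the subfamily corresponding to $Y_j$; this is the claimed partition.

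The last step is to pull back the two routine facts already invoked in \cref{sec:selection}. If $(A,x)=\sum_i\lambda_i(A_i,x_i)$ is a convex combination, then for every $u\in B^d$ we have $Au+x=\sum_i\lambda_i(A_iu+x_i)\in\conv\big(\bigcup_i(A_i(B^d)+x_i)\big)$, so $A(B^d)+x\subseteq\conv\{A_i(B^d)+x_i\}$; applying this within each part $Y_j$ gives $A(B^d)+x\subseteq\conv(\mathcal E_j)$ for every $j$, hence $A(B^d)+x\subseteq\bigcap_{j=1}^r\conv(\mathcal E_j)$. For the volume, $\log\det$ is concave on $P\!D^d$ and every $\det A_i$ equals the common value $1/\vol(B^d)$ (because $\vol(A_i(B^d)+x_i)=\det(A_i)\vol(B^d)=1$), so $\det A\ge\prod_i(\det A_i)^{\lambda_i}=1/\vol(B^d)$ and therefore $\vol\big(A(B^d)+x\big)\ge1$. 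Scaling this ellipsoid toward its center down to volume exactly $1$ keeps it inside the intersection and finishes the proof.

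I do not expect a genuine obstacle here: the content is entirely the parametrization, which the paper has already set up, and this theorem functions as a recorded lemma. The only points that require care are bookkeeping — verifying that the parametrizing space has dimension precisely $d(d+3)/2$ so that the point count matches the Tverberg bound, ensuring (as in the standard statement of Tverberg's theorem) that the parts $Y_j$ are nonempty so that each $\conv(\mathcal E_j)$ is nonempty, and observing that the Tverberg point automatically lands in $P\!D^d$ by convexity so that the log-concavity estimate on the determinant applies.
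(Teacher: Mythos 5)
Your proposal is correct and is exactly the ``parametrization argument'' the paper alludes to when it invokes this result (the paper itself only cites the theorem from Sarkar--Xue--Sober\'on and does not write out a proof): identify each volume-one ellipsoid with a point of the convex set $P\!D^d\times\R^d\subseteq\R^{d(d+3)/2}$, apply Tverberg's theorem in that space with the matching Tverberg number $(r-1)\big(\tfrac12 d(d+3)+1\big)+1$, and pull back using the containment of the ``averaged'' ellipsoid in the convex hulls together with log-concavity of the determinant on $P\!D^d$ for the volume bound. All the bookkeeping you flag (dimension count, nonempty parts, the Tverberg point landing in $P\!D^d$) checks out.
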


\begin{theorem}[Stenitz's theorem with volume; Ivanov--Nasz\'odi \cite{quant-steinitz}, Theorem 3]\label{thm:quant-steinitz}
    Any set $X \subseteq \R^d$ whose convex hull contains the unit ball $B^d$ has a subset $Y \subseteq X$ of at most $2d$ points such that $\conv(Y) \supseteq (5d^2)^{-1} B^d$.
\end{theorem}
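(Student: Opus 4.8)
The plan is to run the argument behind Steinitz's classical theorem --- that $0$ lies in the interior of the convex hull of at most $2d$ of the points --- while tracking, quantitatively, how deep the origin stays. The one place the hypothesis is used is its dual form: $B^d \subseteq \conv(X)$ holds if and only if for every unit vector $u$ there is an $x \in X$ with $\langle u, x \rangle \ge 1$. This turns ``the hull is fat'' into a direction-by-direction resource that the rest of the argument spends.

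Next I would set up an extremal object. By the classical Steinitz theorem the family of subsets $Y \subseteq X$ with $|Y| \le 2d$ and $0 \in \Int\conv(Y)$ is nonempty, so I may pick $Y^{*}$ in it maximizing the centered inradius $r^{*} := \max\{\, r \ge 0 : r B^d \subseteq \conv(Y^{*}) \,\}$; the maximum is attained since $X$ is finite. It then suffices to show $r^{*} \ge (5d^{2})^{-1}$, which I would do by contradiction: a smaller $r^{*}$ should let me build a subset of size at most $2d$ whose hull contains a strictly larger centered ball.

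The engine is a quantitative exchange step. Choose a contact point $z = r^{*}u$ of the inball $r^{*}B^d$ with $\partial\conv(Y^{*})$, so that $u$ is a unit vector with $\langle y,u\rangle \le r^{*}$ for all $y \in Y^{*}$; the dual hypothesis then gives $x \in X$ with $\langle x,u\rangle \ge 1 \gg r^{*}$. Add $x$ and --- if $|Y^{*}| = 2d$ --- delete one old vertex, choosing the deleted vertex by a Radon/Steinitz elimination among the vertices that are nearly inactive at the critical facet $\{\langle\,\cdot\,,u\rangle \approx r^{*}\}$, so that the origin stays interior, the inball shrinks by only a controlled amount in the remaining directions, and it strictly grows in the direction $u$. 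Showing that the growth outweighs the shrinkage (and iterating, or equivalently arguing directly at the maximizer) forces $r^{*} \ge (5d^{2})^{-1}$; the constant falls out of optimizing the crude bounds, with one factor $d$ coming from the dimension of the elimination and a further factor on the order of $5d$ from bounding how far deleting a single vertex can pull the inball inward.

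I expect the exchange step to be essentially the whole difficulty, and specifically the task of making its two estimates interlock: one must simultaneously (a) pick the deleted vertex among those essentially inactive at the contact facet, so the deletion is cheap, and (b) guarantee that the inserted $x$ genuinely enlarges the inball in the deficient direction --- controlling either of these in isolation is routine, but arranging a net strict gain uniformly over all configurations is the crux. Everything else (the dual reformulation, the existence of the extremal $Y^{*}$, and the final constant-chasing) is routine. One tempting shortcut --- induction on $d$ by projecting along a longest vector and capping the new coordinate with two extra points --- breaks down because a capping point may carry an uncontrolled component inside the projection hyperplane, so the capped hull need not contain a centered ball; this is precisely the obstruction the exchange argument is designed to handle.
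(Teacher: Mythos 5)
This theorem is quoted from Ivanov--Nasz\'odi and the paper offers no proof of it, so your argument can only be judged on its own terms. The dual reformulation ($B^d \subseteq \conv(X)$ iff every unit vector $u$ admits $x \in X$ with $\langle u, x\rangle \ge 1$) and the reduction to an extremal $Y^{*}$ are fine, but the ``quantitative exchange step'' is not an argument --- it is a restatement of the theorem, and you say as much when you call it ``essentially the whole difficulty.'' Concretely, two things are missing and neither is routine. (a) You must exhibit \emph{some} vertex whose deletion costs only an $O(d)$ factor in the centered inradius, but a vertex that is inactive at the contact facet $\{\langle \cdot, u\rangle = r^{*}\}$ can still be the unique member of $Y^{*}$ with $\langle y, w\rangle > 0$ for some other direction $w$; deleting it then puts the origin on the boundary of the remaining hull, and if the inserted $x$ (chosen only for its large $\langle x,u\rangle$) also has $\langle x, w\rangle \le 0$, the new hull contains no centered ball at all. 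Nothing in the proposal selects a ``safe'' vertex or bounds the loss. (b) Even granting a controlled loss, the centered inradius is a minimum over all outward directions, so pushing the hull out in the single deficient direction $u$ while perturbing every other facet (and creating new ones through the deletion) does not yield a strict net gain by any inequality you have stated; the contradiction with maximality of $r^{*}$ therefore does not follow. Since the constant $(5d^2)^{-1}$ is supposed to ``fall out of optimizing the crude bounds'' in a step that has no bounds, the proof has a genuine gap at its core.

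For orientation: the published proof does not run a local improvement on an extremal configuration at all; it constructs the $2d$ points directly (a well-spread half of the points chosen by a volumetric/greedy selection, matched with $d$ points opposing them in dual directions), with the polynomial constant coming from explicit estimates on that explicit configuration. If you want to salvage an exchange-based route, the first task is to prove a lemma of the form ``if $0 \in \Int\conv(Y)$ with $|Y| = 2d$ and $rB^d \subseteq \conv(Y)$, then some $y \in Y$ can be removed so that the remaining hull still contains $c(d)\, r B^d$'' --- and it is exactly this kind of statement that is false without further structure on $Y$.
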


\begin{theorem}[Fractional Helly with volume; Frankl--Jung--Tomon \cite{improved-quant-frac-helly}, Theorem 1.2]\label{thm:frac-helly-vol-d+1}
    For every positive integer $d$ and  $\alpha \in (0,1)$, there exist constants $v_d > 0$ and $\beta_{d,\alpha} > 0$ such that the following holds: For any family $\mathcal F$ of convex sets in $\R^d$ such that at least $\alpha \binom{|\mathcal F|}{d+1}$ of the $(d + 1)$-tuples of members of $\mathcal F$ have an intersection whose volume is at least 1, there is a subfamily $\mathcal G \subseteq \mathcal F$ with at least $\beta_{d,\alpha}$ sets such that $\bigcap \mathcal G$ has volume at least $v_d$.
\end{theorem}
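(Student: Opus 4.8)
The plan is to reduce the theorem to the classical (non-volumetric) fractional Helly theorem through a Minkowski-erosion device. Call a $(d+1)$-tuple $\sigma\subseteq\mathcal F$ \emph{heavy} if $\vol\big(\bigcap_{F\in\sigma}F\big)\ge 1$; the hypothesis supplies at least $\alpha\binom{|\mathcal F|}{d+1}$ heavy tuples. John's theorem (\cref{thm:john}) attaches to each heavy $\sigma$ an ellipsoid $E_\sigma\subseteq\bigcap_{F\in\sigma}F$ with $\vol(E_\sigma)\ge d^{-d}$. The device is the identity $\big(\bigcap_i F_i\big)\ominus E_0=\bigcap_i\big(F_i\ominus E_0\big)$, valid for any body $E_0$, where $A\ominus E_0\defeq\{t:E_0+t\subseteq A\}$ is convex whenever $A$ and $E_0$ are: if a \emph{fixed} ellipsoid $E_0$ satisfies $\bigcap_{F\in\sigma}F\supseteq E_0+t_\sigma$ for some heavy tuple $\sigma$, then $t_\sigma$ is a common point of the convex sets $\{F\ominus E_0:F\in\sigma\}$.

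Granting, for the moment, that one can find a single ellipsoid $E_0$ with $\vol(E_0)\ge v_d$ such that a positive fraction $\alpha'=\alpha'(d,\alpha)$ of the heavy tuples have a translate of $E_0$ contained in their intersection, the argument finishes quickly: the family $\mathcal G_0\defeq\{F\ominus E_0:F\in\mathcal F\}$ has at least $\alpha'\binom{|\mathcal F|}{d+1}$ intersecting $(d+1)$-tuples, so the classical fractional Helly theorem produces a point $t^\ast$ lying in a positive fraction of the sets $F\ominus E_0$; the corresponding original sets $F$ all contain $E_0+t^\ast$, a body of volume $\vol(E_0)\ge v_d$, as required.

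So everything comes down to locating $E_0$, which is the heart of the matter. The obstruction is that the natural parameter space for the ellipsoids $E_\sigma$---a center in $\R^d$ together with a positive-definite shape matrix of fixed determinant---is non-compact, so one cannot simply pigeonhole the heavy tuples into finitely many cells. The remedy we would try has two stages. First, \emph{buy compactness}: apply classical fractional Helly to $\mathcal F$ itself (heavy tuples are in particular intersecting) and pass to a linear-sized subfamily confined to a bounded window $Q$; the point is to arrange this so that a positive fraction of the heavy tuples survive with their John ellipsoids lying inside $Q$. Second, \emph{discretize}: an ellipsoid of volume at least $d^{-d}$ sitting inside a set of bounded diameter has bounded eccentricity, so inside $Q$ the pairs (center, shape) range over a compact set; cover it by $N=N(d,\alpha)$ cells fine enough that all ellipsoids assigned to a single cell contain a common translate of one fixed ellipsoid of volume $\ge v_d$, and let $E_0$ be the one coming from the cell that catches the most heavy tuples. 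The main obstacle---and what separates the $(d+1)$-tuple case from the $\tfrac12 d(d+3)$-tuple case of \cref{thm:vol-frac-helly}, where the parametrization is applied just once and globally---is precisely the first stage: showing that the localization can be carried out without destroying the positive density of heavy tuples. The plan here is to track, within the proof of classical fractional Helly, which heavy tuples witness each candidate piercing point, and if necessary to iterate the localization, peeling off one bounded cluster of heavy tuples at a time until a positive fraction has been gathered into a single window.
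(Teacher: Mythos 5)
First, a framing point: the paper does not prove \cref{thm:frac-helly-vol-d+1}; it is imported verbatim from Frankl--Jung--Tomon \cite{improved-quant-frac-helly} (it lives in the appendix ``Gallery of theorems''), so there is no internal proof to compare against. Judged on its own terms, your proposal contains a genuine gap, and it sits exactly where you flag it. The erosion reduction is sound: the identity $\bigl(\bigcap_i F_i\bigr)\ominus E_0=\bigcap_i(F_i\ominus E_0)$ is correct, and if a single body $E_0$ of volume at least $v_d$ has a translate inside the intersections of a positive fraction of the heavy $(d+1)$-tuples, classical fractional Helly applied to $\{F\ominus E_0\}$ does finish the proof. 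But producing $E_0$ is not a removable technicality --- it is essentially \emph{equivalent} to the theorem. The conclusion of the theorem hands you such an $E_0$ (the John ellipsoid of $\bigcap\mathcal G$, with the identity translate, sits inside every $(d+1)$-tuple drawn from the linear-sized subfamily $\mathcal G$, and those form a positive fraction of all tuples), and conversely your argument derives the conclusion from $E_0$. So the entire content of the Frankl--Jung--Tomon proof lives in the step you defer.

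The sketched remedy does not close the gap. Since the erosion already absorbs translations ($E_0+t_\sigma$ for arbitrary $t_\sigma$), position is not the obstruction; shape is. The John ellipsoids $E_\sigma$ of the heavy intersections may be arbitrarily eccentric and oriented in up to $\binom{n}{d+1}$ pairwise incompatible ways --- translates of two volume-$1$ slabs of width $\epsilon$ in transverse directions share only volume $O(\epsilon^{2})$ --- in which case no fixed $E_0$ of volume bounded below serves more than a vanishing fraction of heavy tuples, and no finite pigeonhole in a compactified parameter space can apply. Your first stage does not repair this: applying classical fractional Helly to $\mathcal F$ (heavy tuples being in particular intersecting) yields a subfamily with a common point, but that subfamily need not be confined to any bounded window, need not retain a positive density of heavy tuples among its \emph{own} $(d+1)$-tuples, and a common point does not force the volume-$1$ portions of the heavy intersections to lie near it --- which is what you would need before ``bounded window $\Rightarrow$ bounded eccentricity'' can be invoked. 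The iterative ``peel off one bounded cluster at a time'' step is precisely the unproved heart of the matter, and it can stall when each shape-class carries only a negligible fraction of the heavy tuples. This difficulty is why the $(d+1)$-tuple version was open until \cite{improved-quant-frac-helly}, whose argument requires substantially more than compactness plus pigeonhole; if you want a statement that \emph{does} follow from a one-shot parametrization and pigeonhole, it is the $\tfrac12 d(d+3)$-tuple version (\cref{thm:vol-frac-helly}), not this one.
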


\begin{theorem}[Ham sandwich theorem; Steinhaus \cite{ham-sandwich}]\label{thm:ham-sandwich}
    If $\mu_1,\dots,\mu_d$ are finite measures in $\R^d$ that assign measure 0 to every hyperplane, then there is a hyperplane whose half-spaces both have measure $1/2$ according to each of the measures $\mu_1,\dots,\mu_d$.
\end{theorem}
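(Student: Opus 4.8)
The plan is to derive the ham sandwich theorem from the Borsuk--Ulam theorem via the standard parametrization of oriented halfspaces by the $d$-sphere. To each point $u = (u_0, u_1, \dots, u_d) \in S^d \subseteq \R^{d+1}$ I would attach the closed halfspace $H(u) = \{x \in \R^d : u_1 x_1 + \cdots + u_d x_d \ge u_0\}$; here $H(-u)$ is the complementary closed halfspace, the two meet only along their common bounding hyperplane (which each $\mu_i$ ignores by hypothesis), and when $(u_1,\dots,u_d) = 0$ the set $H(u)$ degenerates to all of $\R^d$ or to $\emptyset$ depending on the sign of $-u_0$. Then I would define a map $f \colon S^d \to \R^d$ whose $i$-th coordinate at $u$ is $\mu_i(H(u)) - \tfrac{1}{2}\mu_i(\R^d)$.

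First I would check that $f$ is continuous. If $u^{(n)} \to u$ in $S^d$, the indicator functions $\mathbf{1}_{H(u^{(n)})}$ converge to $\mathbf{1}_{H(u)}$ pointwise off the bounding hyperplane of $H(u)$ (and everywhere, in the degenerate case); since that hyperplane is $\mu_i$-null and the $\mu_i$ are finite, dominated convergence yields $\mu_i(H(u^{(n)})) \to \mu_i(H(u))$. This is the only place the hypothesis that the measures vanish on hyperplanes enters, and I expect it to be the one mildly technical point of the argument. Next I would note that $f$ is odd: since $\mu_i(H(u)) + \mu_i(H(-u)) = \mu_i(\R^d)$, we get $f(-u) = -f(u)$ for all $u$.

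Finally I would apply the Borsuk--Ulam theorem to the continuous map $f \colon S^d \to \R^d$: it sends some antipodal pair $\{u^*, -u^*\}$ to a common value, and oddness forces that value to be $0$, i.e. $\mu_i(H(u^*)) = \tfrac{1}{2}\mu_i(\R^d)$ for every $i$. It then remains only to exclude the degenerate case $u^* = (\pm 1, 0, \dots, 0)$: there $H(u^*)$ is $\R^d$ or $\emptyset$, so $f(u^*) = 0$ forces every $\mu_i$ to be the zero measure, in which case any hyperplane bisects all of them; otherwise the boundary of $H(u^*)$ is a genuine hyperplane whose two halfspaces each carry mass $\tfrac{1}{2}\mu_i(\R^d)$, which is the desired common bisector. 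The real content is the invocation of Borsuk--Ulam; everything else is bookkeeping, with the continuity of $f$ the only step that needs a moment's care.
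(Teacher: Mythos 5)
Your argument is correct and complete: the parametrization of closed halfspaces by $S^d$, the dominated-convergence check of continuity (which is indeed exactly where the hypothesis that hyperplanes are null sets is needed, both for a.e.\ convergence of the indicators and for $\mu_i(H(u))+\mu_i(H(-u))=\mu_i(\R^d)$), the oddness of $f$, Borsuk--Ulam, and the handling of the degenerate poles are all in order. The paper states this theorem only as a cited classical result (in the appendix gallery) and gives no proof of its own, so there is nothing to compare against; what you have written is the standard Borsuk--Ulam proof and it stands on its own.
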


\subsection*{Hypergraph structure}

\begin{theorem}[Weak regularity lemma for hypergraphs; Pach \cite{homogeneous-selection}, Theorem 2.3]\label{thm:hypergraph-regularity}
    Let $\mathcal H$ be a $k$-partite $k$-uniform hypergraph with vertex partition $X_1,\dots, X_k$, such that $|X_i| = n$ for every $1\leq i \leq k$. Suppose also that $\mathcal H$ contains at least $\beta n^k$ edges and fix an $\epsilon \in (0,1/2)$. For sufficiently large $n$ (in terms of $k$, $\beta$, and $\epsilon$), there are equal-size subsets $Y_i \subseteq X_i$ such that
    \begin{itemize}
        \item $|Y_i| \geq \beta^{1/\epsilon^k} |X_i|$,
        \item the subhypergraph induced on $\prod_{i=1}^k Y_i$ contains at least $\beta |Y_i|^k$ edges, and
        \item any subhypergraph $(Z_1, \dots, Z_k) \subseteq \prod_{i=1}^k Y_i$ with $|Z_i| \geq \epsilon |Y_i|$ contains at least 1 edge of $\mathcal H$.
    \end{itemize}
\end{theorem}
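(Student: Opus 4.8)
The plan is to prove this by a density-increment iteration over ``boxes.'' Throughout, call a product $\prod_{i=1}^k Y_i$ with $Y_i \subseteq X_i$ a \emph{box}, call its \emph{density} the fraction of its $k$-tuples that are edges of $\mathcal H$, and call a sub-box $\prod_{i=1}^k Z_i$ (with $Z_i \subseteq Y_i$) a \emph{bad box} of $\prod Y_i$ if $|Z_i| \geq \epsilon|Y_i|$ for all $i$ and no $k$-tuple of $\prod Z_i$ is an edge. The third conclusion of the theorem says exactly that $\prod Y_i$ has no bad box, so the whole statement reduces to producing a box with equal-size parts of size at least $\beta^{1/\epsilon^k}n$, density at least $\beta$, and no bad box.

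The iteration starts from $Y_i^{(0)} = X_i$, which has density at least $\beta$. Given a box $\prod Y_i^{(t)}$ whose parts all have a common size $n_t$ and whose density $d_t$ is at least $\beta$: if it has no bad box, stop; otherwise fix a bad box and, passing to a sub-box of it, assume each $Z_i$ has size exactly $\lceil\epsilon n_t\rceil$. Decompose $\prod_i Y_i^{(t)} \setminus \prod_i Z_i$ into the $2^k-1$ boxes $B_S = \prod_{i \in S}\bigl(Y_i^{(t)}\setminus Z_i\bigr) \times \prod_{i \notin S} Z_i$ over all nonempty $S \subseteq [k]$. These partition the edges of $\prod Y_i^{(t)}$ (the part inside $\prod Z_i$ is edge-free), and their total volume is $n_t^k - \lceil\epsilon n_t\rceil^k \leq (1-\epsilon^k)\,n_t^k$; hence the densest $B_S$ has density at least $d_t/(1-\epsilon^k)$, and all of its parts have size at least $\epsilon n_t$. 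Finally, inside that box pass to a sub-box all of whose parts have the size of its smallest part and whose density is largest among such sub-boxes; since the average density over all equal-part sub-boxes of a prescribed size equals the density of the ambient box, this density is still at least $d_t/(1-\epsilon^k)$. Set $\prod Y_i^{(t+1)}$ to be this sub-box: its parts are equal, $n_{t+1} \geq \epsilon n_t$, and $d_{t+1} \geq d_t/(1-\epsilon^k)$.

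Because density is at most $1$, the bound $d_t \geq \beta\,(1-\epsilon^k)^{-t}$ forces the iteration to halt after $T \leq \ln(1/\beta)/\epsilon^k$ stages. The terminal box $\prod Y_i^{(T)}$ has equal parts, density at least $\beta$ (the second conclusion) and no bad box (the third). Its parts have size at least $\epsilon^{T}n$, and for $n$ large this is positive and large enough that every box arising in the process is genuine (the ceilings do no harm) and that the final box --- having density $\geq \beta$ and volume $\geq 1/\beta$ --- really contains an edge.

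The main obstacle is the first conclusion, and in particular the clean exponent: the crude estimate above only gives parts of size $\epsilon^{T}n = \beta^{\epsilon^{-k}\ln(1/\epsilon)}n$, since a single cut can shrink a part by a factor as small as $\epsilon$. Recovering $\beta^{1/\epsilon^k}n$ requires a finer accounting that weighs the size of each cut against the density jump it produces --- a drastic cut in coordinate $i$ comes from a bad box that is wide in coordinate $i$, which is expensive in volume and so forces the iteration to be short, whereas a long iteration can consist only of mild cuts --- and then optimizes the trade-off. Everything else is routine: the other two conclusions fall out directly, and the equal-size requirement is absorbed by the averaging step. This quantitative bookkeeping, not any geometric input, is the crux.
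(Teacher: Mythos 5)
The paper does not prove this lemma --- it is quoted from Pach \cite{homogeneous-selection} in the appendix --- so I can only judge your argument on its own terms. Your overall strategy is the standard (and surely Pach's) one: density increment over product boxes, using the partition of $\prod_i Y_i^{(t)} \setminus \prod_i Z_i$ into the $2^k-1$ boxes $B_S$ and the weighted-average bound $\max_S e(B_S)/|B_S| \geq d_t/(1-\epsilon^k)$. The second and third conclusions, the equalization-by-averaging step, and the termination bound $T \leq \epsilon^{-k}\ln(1/\beta)$ are all correct as you present them.

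The gap is the one you yourself flag: your argument proves the first bullet only with $|Y_i| \geq \beta^{\epsilon^{-k}\ln(1/\epsilon)}\,n$, which is weaker than the stated $\beta^{1/\epsilon^k}n$ whenever $\epsilon < 1/e$, and you do not carry out the ``finer accounting'' needed to close it. Worse, the mechanism you sketch for that accounting is misidentified: a drastic cut in coordinate $i$ does \emph{not} come from the bad box being wide in coordinate $i$ (one may always shrink the bad box to the minimal size $\lceil\epsilon n_t\rceil$ in every coordinate); it comes from choosing a $B_S$ with $i \notin S$. The correct trade-off is between the volume of the chosen $B_S$ and its share of the edges --- e.g.\ select $S$ maximizing $e(B_S)/|B_S|^{1-\gamma}$ for a suitable $\gamma \approx \epsilon^k/(k\ln 2)$; a power-mean estimate on $\sum_S |B_S|^{1-\gamma}$ shows such an $S$ always exists with $\ln(d_{t+1}/d_t) \geq \gamma\ln(V_t/V_{t+1})$, and telescoping recovers an exponent of order $\epsilon^{-k}$ --- but one must then also absorb the volume lost in your per-step equalization, which your sketch does not address. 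So as a proof of the theorem as stated, the first bullet is not established. I will note, however, that every use of this lemma in the paper only requires $|Y_i| \geq \alpha_{d}\,|X_i|$ for \emph{some} positive constant depending on the fixed parameters, so your weaker exponent would suffice for all of the paper's applications.
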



\end{document}